\documentclass[12pt]{amsart}
\usepackage{amsmath,amsthm,amsfonts,amssymb,latexsym,enumerate} 
\textwidth=432pt
\topmargin=14pt
\oddsidemargin=18pt
\evensidemargin=18pt



\newcommand{\QQ}{{\mathbb{Q}}}
\newcommand{\PP}{{\mathcal{P}}}

\newcommand{\bO} {\mathbf O}

\newcommand{\bC} {\mathbf C}
\newcommand{\bZ}{{\mathbf Z}}
\newcommand{\bF}{{\mathbf F}}

\newcommand{\aS} {\mathcal S}

\newcommand{\Aut}{{{\operatorname{Aut}}}}

\newcommand{\Irr}{{{\operatorname{Irr}}}}

\newcommand{\SL}{\operatorname{SL}}
\newcommand{\cd}{\operatorname{cd}}

\newcommand{\Soc}{\operatorname{Soc}}
\newcommand{\Ker}{\operatorname{Ker}}
\newcommand{\Syl}{\operatorname{Syl}}
\newcommand{\Sym}{\operatorname{Sym}}
\newcommand{\dl}{\operatorname{dl}}
\newcommand{\PSL}{\operatorname{PSL}}
\newcommand{\PSp}{\operatorname{PSp}}
\newcommand{\Alt}{\operatorname{Alt}}

\newtheorem{thm}{Theorem}[section]
\newtheorem{lem}[thm]{Lemma}
\newtheorem{con}[thm]{Conjecture}

\newtheorem{cor}[thm]{Corollary}
\newtheorem{que}[thm]{Question}

\newtheorem*{queA}{Question A}
\newtheorem*{conB}{Conjecture B}
\newtheorem*{conC}{Conjecture C}

\theoremstyle{definition}

\numberwithin{equation}{section}



\begin{document}
\title[Character degrees of finite groups]{Methods and questions in character degrees of finite groups}

\author{Alexander Moret\'o}
\address{Departamento de Matem\'aticas, Universidad de Valencia, 46100
  Burjassot, Valencia, Spain}
\email{alexander.moreto@uv.es}

\thanks{I am very grateful Noelia Rizo  for many inspiring conversations and for encouraging me to write this paper. I also thank Nguyen Ngoc Hung,  Gabriel Navarro and Benjamin Sambale for helpful comments on a previous version of this paper.  Last, but not least, I thank the anonymous  referees for their very careful reading of the paper and many helpful suggestions. The research of the author is supported by  Ministerio de Ciencia e Innovaci\'on (Grant PID2019-103854GB-I00 funded by MCIN/AEI/ 10.13039/501100011033).}

\keywords{character degrees, zeros of characters, solvable group, simple group, automorphism group}

\subjclass[2010]{Primary 20C15}

\date{\today}

\begin{abstract}
We present some variations on some of the main open problems on character degrees. 
We collect some of the methods that have proven to be very useful to work on these problems. These methods are also useful to solve certain problems on zeros of characters, character kernels and fields of values of characters.
\end{abstract}

\maketitle


\centerline{\it
To Pham Huu Tiep, on his $60$th birthday}

\section{Introduction}  

In this mostly expository paper, we collect some of the ideas that have proven to be very useful in our research on character theory of finite groups. 
Many of the problems are divided in three layers. First, the problem is studied in the case of $p$-groups, then solvable groups and finally arbitrary finite groups.

As is well-known, given the vast diversity of $p$-groups, it is not easy to develop a general theory of $p$-groups. Much less, a general character theory of $p$-groups. For this reason, we cannot single out some ideas that will be useful to solve many problems on characters of $p$-groups. In our experience, each problem has required its own ad hoc techniques. For this reason, when we discuss $p$-groups in this article, we will restrict ourselves to present some of the problems we have worked on, some open questions, and the interplay between these problems on characters of $p$-groups with problems on wider classes of groups. It is our hope that, some day, we will see a more developed character theory of $p$-groups. 

Regarding solvable groups, we will focus on the large orbit theorem proved by T. Wolf and the author in 2004 (Theorem E of \cite{mw}). There have been a number of applications of this theorem. These applications concern not only problems on  characters of solvable groups, but also problems on conjugacy class sizes of solvable groups. It partially explains the duality that has been observed between results on conjugacy class sizes and on character degrees.
We expect more applications to be found in the future.  

Probably the main open problems on character degrees of solvable groups are the Isaacs-Seitz conjecture, Huppert's $\rho$-$\sigma$ conjecture and Gluck's conjecture (see Sections 4 and 5 for details). As we will see, Theorem E of \cite{mw} is relevant both for Huppert's $\rho$-$\sigma$ conjecture and Gluck's conjecture but, unfortunately, not much progress has been made on any of these conjectures in the last couple of decades.  One of the main goals of this paper is to present stronger versions of these conjectures, with the hope that they will either allow for an alternative approach to the problem or to gain some insight. The following is our strong form of the Isaacs-Seitz conjecture. Given a group $G$, $\cd(G)$ is the set of degrees of the irreducible characters of $G$.

\begin{queA}
Let $G$ be a non-monomial solvable group. Is it true that $|\cd(G')|<|\cd(G)|$?
\end{queA}

The Isaacs-Seitz conjecture is known for monomial groups by Taketa's
argument. For non-monomial groups, an affirmative answer to Question A indeed
implies the Isaacs-Seitz conjecture.
On the other hand, 
it is not that easy to find non-abelian monomial counterexamples to the conclusion in Question A.  Essentially, the only such counterexample we are aware of is $G={\tt PrimitiveGroup(81,64)}$ (and direct products of this group with abelian groups). This example was found by B. Sambale, to whom we thank.  Note that $|G|=2^9\cdot3^4$ and $G$ has a normal abelian Sylow $3$-subgroup. It follows from Theorem 6.22 and Theorem 6.23 of \cite{isa} that $G$ is monomial. 

Huppert's $\rho$-$\sigma$ conjecture also has a version for arbitrary finite groups.  As usual, if $G$ is a finite group $\rho(G)$ is the set of primes that divide the degree of some irreducible characters of $G$. The following is inspired by the proofs of some of the results on this conjecture.

\begin{conB} 
Let $G$ be a finite group. Then there exist $\chi_1,\chi_2,\chi_3\in\Irr(G)$ such that  all the primes in $\rho(G)$ divide $\chi_1(1)\chi_2(1)\chi_3(1)$. Furthermore, if $G$ is solvable, then two characters are enough.
\end{conB}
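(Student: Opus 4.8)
\emph{Proof proposal.} Note first that Conjecture~B is a ``witnessed'' strengthening of Huppert's $\rho$--$\sigma$ conjecture: if $\chi_1,\chi_2,\chi_3\in\Irr(G)$ are as in the statement, then every prime of $\rho(G)$ divides one of $\chi_1(1),\chi_2(1),\chi_3(1)$, and each $\chi_i(1)$ has at most $\sigma(G)$ distinct prime divisors, so $|\rho(G)|\le 3\sigma(G)$, while the solvable half gives $|\rho(G)|\le 2\sigma(G)$. The plan is therefore to revisit the arguments behind the known partial results on $\rho$--$\sigma$ --- which in the solvable case already produce bounds of the shape $2\sigma(G)+1$ --- turn them into statements that exhibit the relevant characters, and shave off the error term. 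We would treat the solvable case first and then reduce the general case to it, together with the almost simple groups, via the classification of finite simple groups.

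For solvable $G$, we argue by induction on $|G|$. By It\^{o}--Michler, $\rho(G)$ consists of the primes $p$ for which a Sylow $p$-subgroup of $G$ is not both normal and abelian. Standard Clifford-theoretic reductions --- as in the proofs of the partial $\rho$--$\sigma$ results: Gasch\"utz's theorem on faithful completely reducible modules, together with the fact that in coprime situations a linear character of an abelian normal subgroup extends to its inertia group --- let one read most of these primes off the action of $G$ on its chief factors, so that a prime $p$ contributes through an orbit length $|G:\cent{G}{v}|$ which is realized as the degree of an irreducible character of $G$ lying over the corresponding linear character; the remaining, more elementary, primes are dealt with directly. One is then left with an orbit problem: to find at most two vectors $v_1,v_2$ in the relevant modules such that every prime of $\rho(G)$ divides $|G:\cent{G}{v_1}|$ or $|G:\cent{G}{v_2}|$. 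The engine for $v_1$ is the large orbit theorem (Theorem~E of \cite{mw}), which produces an orbit whose length is large relative to $|G|$; the primes it misses divide the proper subgroup $H=\cent{G}{v_1}$, and the delicate point is to absorb all of them with a \emph{single} further vector rather than with two (which is all the inductive hypothesis applied to $H$ would directly give) --- in other words, to upgrade the current $2\sigma(G)+1$ estimates to a clean $2\sigma(G)$ together with explicit witnesses.

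For arbitrary $G$ we use the classification to reduce to controlling the solvable radical $R$ of $G$ together with the non-abelian chief factors. The input to be verified by inspection of the list of simple groups is that for every finite almost simple group $S$ the set $\rho(S)$ is covered by the prime divisors of at most \emph{three} irreducible degrees; three can genuinely be needed --- already for $S=\PSL_2(q)$, where one needs three degrees, one divisible by the defining prime and the other two by $q-1$ and by $q+1$ respectively, these last two sharing at most the prime $2$ --- while for groups of Lie type one combines the Steinberg degree with a couple of unipotent or semisimple degrees, and for alternating and sporadic $S$ a short direct check suffices. A product of non-abelian chief factors then still needs only three ``tracks'', since those of the individual factors can be aligned. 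Finally one merges this with the solvable part: $R$ needs only two tracks, so aligning them with two of the three tracks of the semisimple part and constructing the corresponding irreducible characters of $G$ by Clifford theory over $R$ produces the desired $\chi_1,\chi_2,\chi_3$.

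The main obstacle is the sharp orbit statement for solvable linear groups: the large orbit theorem of \cite{mw} supplies a quantitatively large orbit, but not, as it stands, one whose complementary set of primes is small enough for a single extra vector to absorb --- this is precisely why the $\rho$--$\sigma$ conjecture for solvable groups is still open, and an arithmetic refinement of that theorem (or a genuinely new orbit argument) seems unavoidable. The secondary difficulty is the bookkeeping in the merging step: the primes $2$ and $3$ lie in $\rho(S)$ for almost every non-abelian simple group $S$ and must be threaded consistently through the three tracks across all composition factors, as must primes occurring only in outer automorphism groups (there witnessed by orbits on $\Irr(S)$ rather than on a module), and one must make sure that this never forces a fourth character.
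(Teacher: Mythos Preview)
The statement in question is Conjecture~B, which the paper presents as an \emph{open conjecture}, not as a theorem with a proof. There is therefore no argument in the paper to compare your attempt against. What the paper does prove is a much weaker solvable bound (Theorem~5.2: eight characters suffice, via the large orbit theorem of \cite{mw,yan09}), and it then records the two-character solvable statement and the three-character general statement as open problems (Questions~\ref{hupsol} and~\ref{hupgen}), noting that the simple-group case of the latter is known by Alvis--Barry \cite{ab} and Barry--Ward \cite{bw}.

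Your outline is a faithful description of the strategy the paper itself advocates: handle the solvable radical by orbit-size arguments coming out of Gasch\"utz's theorem and Theorem~\ref{lo}, handle $G/R(G)$ via the structure described in Section~2 together with classification input on simple and almost simple groups, and glue the two pieces by Clifford theory. You even identify correctly that the decisive missing ingredient is an arithmetic sharpening of the large orbit theorem that would force the solvable case down to two characters --- and you say plainly that this is ``precisely why the $\rho$--$\sigma$ conjecture for solvable groups is still open''. That admission is accurate, and it is also fatal to calling this a proof: you have not supplied that sharpening, nor any substitute for it, so your solvable step is a wish rather than an argument. The merging step in the nonsolvable case is likewise only sketched; aligning the two solvable ``tracks'' with two of the three semisimple ``tracks'' is plausible but is not established here, and the paper explicitly warns (end of Section~9) that current methods, which treat $R(G)$ and $G/R(G)$ separately, seem insufficient for sharp bounds.

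In summary: there is no proof in the paper because the statement is open; your proposal is a coherent research plan that matches the paper's philosophy, but it does not close the gap it itself names.
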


A version of Gluck's conjecture for arbitrary finite groups has recently been proposed in \cite{chmn}.  Motivated by the work in \cite{mw} and \cite{m06} (see for instance  Question 2.2 of \cite{mw} and Section 2 of \cite{m06}) we propose the following. We write $\bF(G)$ to denote the Fitting subgroup of the group $G$.

\begin{conC}
Let $G$ be a finite group.  Then there exist $\chi_1,\chi_2,\chi_3,\chi_4\in\Irr(G)$ such that  $|G:\bF(G)|$ divides $\chi_1(1)\chi_2(1)\chi_3(1)\chi_4(1)$. Furthermore, if $G$ is solvable, then two characters are enough.
\end{conC}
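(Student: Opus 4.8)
The plan is to translate the divisibility assertion into a statement about orbit lengths, and then to attack it in two regimes: the solvable case through Theorem E of \cite{mw}, and the general case through a reduction to almost simple groups. Put $F=\bF(G)$ and let $G$ act on $\irr F$ by conjugation. If $\lambda_1,\dots,\lambda_k\in\irr F$ and $\chi_i\in\irr{G\mid\lambda_i}$, then $[G:I_G(\lambda_i)]$ divides $\chi_i(1)$, and since $F\le I_G(\lambda_i)$ this index divides $|G:F|$. So it suffices to exhibit a bounded number of $G$-orbits on $\irr F$ whose lengths multiply to a multiple of $|G:F|$, equivalently characters $\lambda_i$ with $\prod_i|I_G(\lambda_i)/F|$ dividing $|G:F|$. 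Passing to the linear characters of $F$ trivial on $\Phi(F)$ and dualising, and using that $\cent{G}{F}\le F$ in the solvable case, this becomes a question about the action of $H=G/F$ on $V=\big(F/\Phi(F)\big)^{\wedge}$, a completely reducible module that is coprime on each primary component: one wants $v_1,\dots,v_k\in V$ with $\prod_i|\cent{H}{v_i}|$ dividing $|H|$.

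For the solvable case I would feed $V$ to the large orbit theorem (Theorem E of \cite{mw}), which produces a ``large orbit'' for the action of $H$ on $V$ --- essentially $v_1,v_2$ with $\cent{H}{v_1}\cap\cent{H}{v_2}=1$, outside a short explicit list of small configurations. I expect the crux here to be that trivial intersection of two point stabilisers only yields $\chi_1(1)\chi_2(1)\ge|G:F|$, whereas we need the stronger statement that $|\cent{H}{v_1}|\,|\cent{H}{v_2}|$ divides $|H|$ (which would follow, say, if one orbit is regular, or if the stabilisers give a factorisation $H=\cent{H}{v_1}\cent{H}{v_2}$). So the solvable case seems to require a sharpened form of the large orbit theorem, together with a separate prime-by-prime treatment of the exceptional configurations --- for a fixed $p$ one can always locate $\chi\in\Irr(G)$ with $\chi(1)_p$ matching the $p$-part of a suitable section, in the spirit of Ito--Michler and Gluck--Wolf --- and finally a reassembly of all contributions into just two characters. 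Keeping that reassembly at two characters, rather than more, is in my view the main difficulty in the solvable case.

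For the general case, let $F^*=\bF^*(G)=E(G)F$, so that $\cent{G}{F^*}=\zent{F^*}$, $G/F^*$ embeds into $\aut{F^*}$, and $|G:F|=|F^*:F|\cdot|G:F^*|$, where $F^*/F$ maps onto a product $\prod_j S_j$ of nonabelian simple groups permuted by $G$. Since every $\operatorname{Out}(S_j)$ is solvable, the whole of $|G:F^*|$ --- both the part acting on $F$ and the part acting on the $S_j$ --- is governed by a solvable action, which the previous step handles with two characters. It remains to absorb $|F^*:F|$, which up to a harmless central factor equals $\prod_j|S_j|$, and this is where two further characters enter, giving the bound $4$ and explaining why the solvable hypothesis removes two of them. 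That last task reduces to an almost simple base case: for $S\le X\le\aut S$ with $S$ simple (so $\bF(X)=1$), produce at most two $\psi\in\irr X$ whose degrees have product divisible by $|X|$, uniformly over the families of simple groups and compatibly with extending characters from $\prod_j S_j$ to the relevant wreath-type overgroup. This forces a CFSG case analysis resting on detailed knowledge of $\cd(S)$ and of the action of $\operatorname{Out}(S)$ on $\irr S$: hook-length and Kummer-type divisibility for alternating groups, unipotent-degree and cyclotomic-polynomial bookkeeping for groups of Lie type, and direct computation for sporadic groups, building on the work of Malle, Mar\'oti, Tiep and Hung. Getting away with a \emph{bounded} number of characters throughout, and merging the contributions of many components and of the outer automorphism groups without the count growing, is in my view the principal obstacle --- precisely where the known partial results on Gluck's conjecture itself run out.
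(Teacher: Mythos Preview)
The statement you are trying to prove is \emph{Conjecture~C} of the paper, an open problem. The paper contains no proof of it; the solvable half is a strict strengthening of Gluck's conjecture (itself open), and the paper explicitly poses the two-character solvable statement as an open question (Question~2.2 of \cite{mw}, repeated in Section~5). So there is no proof in the paper to compare your proposal against.

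Your outline is in the spirit the paper advocates --- Gasch\"utz plus orbit arguments for the solvable part, then $\bF^*(G)$ and a CFSG reduction for the rest --- but the gaps you yourself flag are not technicalities; they are the content of the open problem. Two further concrete points. First, Theorem~E of \cite{mw} does not produce $v_1,v_2$ with $\bC_H(v_1)\cap\bC_H(v_2)=1$; it yields a single $v$ with $\bC_H(v)\le\bF_7(H)$, which is much weaker than the divisibility you need, and your ``sharpened large orbit theorem'' is essentially a restatement of the conjecture. Second, your almost simple base case is false as stated: for $S\le X\le\Aut(S)$ with $X=S$ simple one has $\bF(X)=1$, and since $b(S)<|S|^{1/2}$ for every nonabelian simple $S$ (the paper recalls this in Section~9), no two $\psi_1,\psi_2\in\Irr(S)$ can have $|S|\mid\psi_1(1)\psi_2(1)$. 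This is exactly why the paper passes from three to four characters and singles out $\Alt(7)$, $\Alt(13)$, $M_{22}$ as the Atlas groups where even three do not suffice. Your plan to absorb $|F^*:F|$ with only two characters therefore cannot work in the form you describe.
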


The aforementioned generalization of Gluck's conjecture asserts that $|G:\bF(G)|\leq b(G)^3$, where $b(G)$ is the largest degree of the complex irreducible characters of $G$. Unfortunately, $3$ characters are not enough in Conjecture C, but the only counterexamples among the simple groups in the Atlas \cite{atl} are $\Alt(7)$, $\Alt(13)$ and $M_{22}$.

In the case of arbitrary finite groups there are two basic but fundamental results that have been routinely used. On the one hand, if $S$ is a nonabelian simple group, then there exists  a nonprincipal  irreducible character $\varphi\in\Irr(S)$ that extends to $\Aut(S)$. This is Lemma 4.2 of \cite{m07} (see also Lemma 2.11 of \cite{m05}). As a sign of the usefulness of this result, we remark that it was already used a couple of times before it appeared in print (see \cite{ms1} and \cite{mm04}).

The second result we are referring to is  that if $A$ is a permutation group on a finite set and $\Gamma=G\wr A$ then any $\Gamma$-invariant irreducible character of the base group extends to $\Gamma$. This follows from  Lemma 1.3 of \cite{mat}.  The result, as stated,  seemed to be folklore before 1995 (see for instance Section 4.3 of \cite{jk}) but Mattarei's remarkable Lemma 1.3 gives additional information on the values of the extended character that as we will see is sometimes useful. We will therefore refer to this result as Mattarei's lemma.

We will illustrate the relevance of both of these results in many problems. For instance, they have been used to obtain generalizations on the two most celebrated results on character degrees, the It\^o-Michler Theorem and Thompson's Theorem (see \cite{hun, ht, nav2} for instance). They were first used in conjunction in \cite{m07} in relation to Brauer's Problem 1 in his famous list of open problems in representation theory of finite groups  \cite{bra}.

Our exposition is far from being exhaustive. We have limited ourselves to explain the ideas that we have used most often in our research. Our goal is to show a pattern that could, hopefully, be used by others in problems on characters of finite groups.
There is one exception to this rule, however. There is a large and growing number of results that guarantee the existence of irreducible characters of simple groups with certain properties that extend irreducibly to (large subgroups of) the automorphism group. Given that it is not easy to keep track of what is known and what not, we have collected in Section 6 most of the known results on this topic.

 The problems we consider here concern mostly character degrees. We have omitted references to results on conjugacy class sizes, but these results appear in several of the references mentioned here. In the last section, we explain how the techniques presented  are also useful for problems on zeros of characters, fields of values and character kernels. The problems and results discussed on solvable groups complement those in the recent expository paper \cite{nsur}
 
\section{The general picture}

Recall that if $G$ is a group, then $\bF(G)$ is the largest normal nilpotent subgroup. In particular, it is the direct product of its Sylow subgroups. This is one of the ways $p$-groups come into play in problems on solvable or arbitrary groups. The following key result allows to translate many problems on character degrees of solvable groups to problems on orbit sizes when a solvable group acts on a finite module. Recall also that the Frattini subgroup $\Phi(G)$ is the intersection of the maximal subgroups of $G$ and $\Phi(G)\leq \bF(G)$. 

\begin{thm}[Gasch\"utz] 
\label{gas}
Let $G$ be a solvable group. Then $\bF(G/\Phi(G))=\bF(G)/\Phi(G)$  is a faithful and completely reducible $G/\bF(G)$-module (possibly of mixed characteristic). Furthermore, $G/\Phi(G)$ splits over $\bF(G)/\Phi(G)$. 
\end{thm}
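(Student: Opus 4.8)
The plan is to prove the Gasch\"utz theorem in three stages: first establish that $\bF(G/\Phi(G)) = \bF(G)/\Phi(G)$, then that this quotient is completely reducible as a $G/\bF(G)$-module, and finally that it is faithful and that the extension splits.

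\textbf{Step 1: Identifying the Fitting quotient.} Write $\Phi = \Phi(G)$ and $F = \bF(G)$. Since $\Phi \leq F$ and $F$ is nilpotent, $F/\Phi$ is a normal nilpotent subgroup of $G/\Phi$, so $F/\Phi \leq \bF(G/\Phi)$. For the reverse inclusion, let $N/\Phi = \bF(G/\Phi)$. Then $N \nor G$ and $N/\Phi$ is nilpotent; since $\Phi = \Phi(G)$ is nilpotent and central modulo... more precisely, an extension of a nilpotent group by a nilpotent group contained in the Frattini subgroup is nilpotent (this is the standard fact that $N$ is nilpotent whenever $N/\Phi(N)$ is, combined with $\Phi(N) \leq \Phi(G)$ for $N \nor G$). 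Hence $N$ is nilpotent, so $N \leq F$ and thus $\bF(G/\Phi) \leq F/\Phi$. This also gives $\bF(G/\Phi) = F/\Phi$, and note $\bF(G/\Phi)$ is abelian here because $\Phi \geq \Phi(F)$ forces $F/\Phi$ to be a quotient of the elementary-abelian-by-pieces group $F/\Phi(F)$.

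\textbf{Step 2: Complete reducibility and faithfulness.} Replacing $G$ by $G/\Phi$, we may assume $\Phi(G) = 1$, so $F = \bF(G)$ is abelian and in fact a direct product of elementary abelian $p$-groups over the various primes $p$ dividing $|F|$; view it as a $G/F$-module (of mixed characteristic). For faithfulness: the kernel of the action of $G/F$ on $F$ by conjugation is $\cent{G}{F}/F$, but in a group with trivial Frattini subgroup one has $\cent{G}{F} = F$ (indeed $\cent{G}{\bF(G)} \leq \bF(G)$ holds for all solvable $G$ since $\bF(G)$ contains its centralizer in any solvable group whenever... this requires $\Phi(G)=1$ or working prime by prime), so the module is faithful. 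For complete reducibility: since $\Phi(G) = 1$, for each prime $p$ the Sylow $p$-subgroup $F_p$ of $F$ satisfies $\Phi(F_p) \leq \Phi(G) = 1$ via the fact that $\oh{p}{G}$ has trivial Frattini when $\Phi(G)=1$, so each $F_p$ is elementary abelian; then Maschke-type arguments (using that $F_p$ as an $\FF_p[G/F]$-module need not have coprime order, but complete reducibility follows instead from $\Phi(G)=1$ via the theorem that $F/\Phi(G)$, written as $\prod F_p$, is a completely reducible $G$-module precisely because $\Phi(G)$ is the "radical" obstruction) give the decomposition into irreducibles.

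\textbf{Step 3: Splitting.} We must show $G$ splits over $F = \bF(G)$ when $\Phi(G) = 1$. Work one prime at a time: it suffices to find a complement, and by Gasch\"utz's splitting theorem (the cohomological version: if $A$ is an abelian normal $p$-subgroup of $G$ and a Sylow $p$-subgroup of $G$ splits over $A$, then $G$ splits over $A$) applied to each $F_p$, we reduce to showing $P$ splits over $F_p$ where $P \in \syl{p}{G}$. Since $\Phi(G) = 1$ and $F_p = \oh{p}{G} \nor P$, one argues $F_p \cap \Phi(P) \leq \Phi(G) = 1$... the cleanest route is to invoke that $F/\Phi(G)$ is complemented in $G/\Phi(G)$ as a direct consequence of the general fact that a chief-factor-wise complemented normal subgroup above the Frattini is complemented, i.e. $\Phi(G) = 1$ means every minimal normal subgroup is complemented and one builds up the complement inductively along a chief series through $F$.

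\textbf{Main obstacle.} The genuinely delicate part is the splitting in Step 3: complete reducibility and the identification of the Fitting quotient are fairly mechanical, but producing an actual complement to $\bF(G)/\Phi(G)$ requires the cohomological Gasch\"utz splitting theorem together with an inductive argument up a chief series, and one must be careful that the reduction to Sylow subgroups interacts correctly with the mixed-characteristic structure of $F$. I would expect to cite the standard sources (Huppert's \emph{Endliche Gruppen} or \cite{isa}) for the cohomological input rather than reproving it.
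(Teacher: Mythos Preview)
The paper does not give a proof at all: it simply cites III.4.2, III.4.4 and III.4.5 of Huppert's \emph{Endliche Gruppen}. Your proposal, by contrast, sketches the standard argument behind those references, and in its final paragraph you yourself say you would cite Huppert for the cohomological splitting input---so in substance you and the paper agree on the source.

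Your outline is broadly correct, but a couple of points are shakier than they need to be. In Step~2 you hesitate over whether $\bC_G(\bF(G))\leq\bF(G)$ requires $\Phi(G)=1$: it does not. This containment holds in every solvable group (indeed $\bC_G(\bF(G))=\bZ(\bF(G))$), so faithfulness is immediate once you know $F/\Phi$ is abelian. Your complete reducibility argument is also more hand-wavy than the rest; the clean statement is that when $\Phi(G)=1$ every minimal normal subgroup of $G$ is complemented (pick a maximal subgroup not containing it), and from this one shows directly that $F$ equals the product of the minimal normal subgroups of $G$, hence is completely reducible as a $G$-module. In Step~1 your parenthetical about $\Phi(N)\leq\Phi(G)$ for $N\nor G$ is in fact correct for finite groups (use Dedekind's law and the non-generator property), so you need not route through $\Phi(N)$ at all: the Frattini argument applied to Sylow subgroups of the preimage of $\bF(G/\Phi)$ gives nilpotence directly. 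None of these are fatal gaps, but tightening them would make the sketch self-contained rather than a paraphrase of what Huppert proves.
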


\begin{proof}
See III.4.2, III.4.4 and III.4.5 of \cite{hup}.
\end{proof}

The next key subgroup in our approach when $G$ is an arbitrary group is the solvable radical $R(G)$, which is the largest normal solvable subgroup. We often work in $G/R(G)$, which is a group with trivial solvable radical. 

Note that if $G$ is a group with trivial solvable radical, then the generalized Fitting subgroup of $G$, $\bF^*(G)$, coincides with the socle of $G$ and is the direct product of all the minimal normal subgroups of $G$.  Since $\bC_G(\bF^*(G))\leq \bF^*(G)$ (by Theorem 6.5.8 of \cite{ks}) and $\bC_{\bF^*(G)}(\bF^*(G))=\bZ(\bF^*(G))=1$ in this case, we deduce that if $G$ is a group with trivial solvable radical then $\bC_G(\bF^*(G))=1$, so $G$ is isomorphic to a subgroup of $\Aut(\bF^*(G))$. 

Now, write $\bF^*(G)=N_1\times\cdots\times N_t$, where $N_i$ is a direct product of, say,  $n_i$ copies of a nonabelian simple group $S_i$ and $S_i\not\cong S_j$ for $i\neq j$. As pointed out in Lemma 2.3 of \cite{m05}, for instance, $\Aut(\bF^*(G))\cong\Aut(N_1)\times\cdots\times\Aut(N_t)$. Notice also that $\Aut(N_i)\cong\Aut(S_i)\wr\Sym(n_i)$. Thus, if we write $\Gamma=\Aut(\bF^*(G))$ then $\Gamma=HN$ where $N=\Aut(S_1)^{n_1}\times\cdots\times\Aut(S_t)^{n_t}\trianglelefteq\Gamma$, $H=\Sym(n_1)\times\cdots\times\Sym(n_t)$ and $H\cap N=1$.

Put $K=G\cap N$. Note that $K\trianglelefteq G$ and $\bF^*(G)=S_1^{n_1}\times\cdots\times S_t^{n_t}\leq K\leq\Aut(S_1)^{n_1}\times\cdots\times\Aut(S_t)^{n_t}=N$. On the other hand $G/K$, which is isomorphic to a subgroup of $H$,  acts on $\bF^*(G)$ by permuting the simple direct factors. This is the reason for the relevance of  Mattarei's lemma. 

\begin{thm}
Let $G$ be a group with trivial solvable radical. Write $\bF^*(G)=N_1\times\cdots\times N_t$, where $N_i$ is a direct product of, say,  $n_i$ copies of a nonabelian simple group $S_i$ and $S_i\not\cong S_j$ for $i\neq j$. Put $\Gamma=HN$ where $N=\Aut(S_1)^{n_1}\times\cdots\times\Aut(S_t)^{n_t}\trianglelefteq\Gamma$ and $H=\Sym(n_1)\times\cdots\times\Sym(n_t)$ acts on $N$ by permuting the copies of $\Aut(S_i)$. Then $G\leq \Gamma$. Furthermore, if $K=G\cap N$ then $G/K$ acts on $\bF^*(G)$ by permuting the simple direct factors.
\end{thm}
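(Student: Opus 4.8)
The plan is essentially to assemble the pieces that the excerpt has already laid out in the paragraphs immediately preceding the statement, so the proof is mostly a matter of citing the right structural facts in the right order.

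\medskip

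\noindent\textbf{Step 1: Realize $G$ inside $\Aut(\bF^*(G))$.} Since $G$ has trivial solvable radical, its generalized Fitting subgroup $\bF^*(G)$ equals $\Soc(G)$ and has trivial centre, so $\bZ(\bF^*(G))=1$. Combining $\bC_G(\bF^*(G))\leq \bF^*(G)$ (Theorem 6.5.8 of \cite{ks}) with $\bC_{\bF^*(G)}(\bF^*(G))=\bZ(\bF^*(G))=1$ gives $\bC_G(\bF^*(G))=1$. Hence conjugation embeds $G$ into $\Aut(\bF^*(G))$, and $\bF^*(G)$ sits inside $G$ as the group of inner automorphisms.

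\medskip

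\noindent\textbf{Step 2: Decompose $\Aut(\bF^*(G))$.} Write $\bF^*(G)=N_1\times\cdots\times N_t$ with $N_i\cong S_i^{n_i}$ and the $S_i$ pairwise non-isomorphic nonabelian simple groups. Because the $N_i$ are the (pairwise non-isomorphic) characteristic factors coming from distinct simple types, any automorphism must preserve each $N_i$, so $\Aut(\bF^*(G))\cong\Aut(N_1)\times\cdots\times\Aut(N_t)$ (Lemma 2.3 of \cite{m05}). For a direct power of a single nonabelian simple group one has $\Aut(S_i^{n_i})\cong\Aut(S_i)\wr\Sym(n_i)$: an automorphism permutes the $n_i$ simple factors (the factors are the minimal normal subgroups of $S_i^{n_i}$, hence permuted) and acts on each by an automorphism of $S_i$. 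Assembling these identifications, $\Gamma:=\Aut(\bF^*(G))=HN$ with $N=\Aut(S_1)^{n_1}\times\cdots\times\Aut(S_t)^{n_t}\trianglelefteq\Gamma$, $H=\Sym(n_1)\times\cdots\times\Sym(n_t)$ acting by permuting coordinates within each block, and $H\cap N=1$. By Step 1, $G\leq\Gamma$.

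\medskip

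\noindent\textbf{Step 3: Analyze $K=G\cap N$.} Clearly $K=G\cap N\trianglelefteq G$ since $N\trianglelefteq\Gamma$. The inclusion $\bF^*(G)=S_1^{n_1}\times\cdots\times S_t^{n_t}\leq K$ holds because the inner automorphisms lie in the base group $N$, and $K\leq N$ by definition; so $\bF^*(G)\leq K\leq N$. Finally, $G/K=G/(G\cap N)\cong GN/N\leq\Gamma/N\cong H$, and the image of $g\in G$ in $H$ is exactly the permutation of the simple direct factors of $\bF^*(G)$ induced by conjugation by $g$. This gives the asserted action of $G/K$ on $\bF^*(G)$ by permuting simple factors, completing the proof.

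\medskip

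\noindent I do not expect a genuine obstacle here, as all the hard content is imported: the only points needing care are the two structural isomorphisms in Step 2 (that distinct simple types cannot be mixed, and the wreath-product shape of $\Aut(S^n)$), both of which are standard and are the content of the cited Lemma 2.3 of \cite{m05}; everything else is bookkeeping with normal subgroups and the second isomorphism theorem.
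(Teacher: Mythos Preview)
Your proof is correct and follows essentially the same approach as the paper: the theorem is really a restatement of the structural discussion in the paragraphs immediately preceding it, and your Steps 1--3 reproduce those paragraphs almost verbatim, citing the same results (Theorem 6.5.8 of \cite{ks} for $\bC_G(\bF^*(G))=1$, Lemma 2.3 of \cite{m05} for the decomposition of $\Aut(\bF^*(G))$, and the wreath-product identification $\Aut(S_i^{n_i})\cong\Aut(S_i)\wr\Sym(n_i)$).
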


Sometimes it is more convenient to work in $G/\bC_G(N_i)\leq\Aut(S_i)\wr\Sym(n_i)$ (see for instance the proof of Theorem C of \cite{m07}, where these ideas were used for the first time).

As we will see, in many problems we will consider first the case of $p$-groups. These results will be applied to the Fitting subgroup of an arbitrary finite group. Next, we consider the case of solvable groups, and apply these results to the solvable radical $R(G)$ of an arbitrary finite groups. Finally, we consider the general case. This implies studying groups with trivial solvable radical, which can be decomposed as above, and then apply these results to the quotient $G/R(G)$ of an arbitrary finite group $G$.

\section{$p$-groups}

Let $P$ be a nonabelian $p$-group. Write $|P:\bZ(P)|=p^{2n+e}$ for some integer $e\in\{0,1\}$. Let $p^m$ be the smallest degree of the nonlinear irreducible characters of $P$ and put $m(P)=m$. By Corollary 2.30 of \cite{isa}, $m(P)\leq n$. Groups with $m(P)=n$ were characterized in Theorems B and C of \cite{fm} in terms of the normal subgroups of $P$: 
they are the $p$-groups $P$ with the property that for any $N\trianglelefteq P$, $P'\leq N$ or $|N\bZ(P):\bZ(P)|\leq p$. 
 As mentioned in Section 3 of \cite{m03}, given any nonabelian $p$-group $P$, we may define $m_1(P)$ to be the integer such that $m(P)=n-m_1(P)$, so the results in \cite{fm} characterize groups with $m_1(P)=0$.
This above mentioned condition on the normal subgroups of a $p$-group was generalized by M. Isaacs \cite{isa02}. He defined $a(P)=a$ to be the smallest integer $a$ such that if $N$ is normal in $P$ and $|N\bZ(P):\bZ(P)|\geq p^a$, then $P'\leq N$. As suggested in Section 3 of \cite{m03}, there should be generalizations on the results in \cite{fm} on groups with $m_1(P)=0$ and $a(P)=0$ or $1$ to arbitrary values of $m_1(P)$ and $a(P)$. However, no progress has been made yet on this. 

		Recently, motivated by a problem on $p$-blocks of $p$-solvable groups, a dual problem has come up (see \cite{mn}). As pointed out in the proof of Lemma 2.1 of \cite{mn}, it is easy to see that if $P$ is a $p$-group and $A\leq P$ is abelian, then $\chi(1)$ divides $|P:A|$ for all $\chi\in\Irr(P)$. In particular, if $|P|=p^n$ and the exponent of $P$ is $p^e$, then $\chi(1)\leq p^{n-e}$ for every $\chi\in\Irr(P)$.  If $b(P)$ is the largest degree of the irreducible characters of $P$, then $b(P)=p^{n-e-s(P)}$ for some nonnegative integer $s(P)$. Notice that $s(P)=0$ if and only if $P$ has an irreducible character that is induced from some cyclic subgroup. These groups were characterized in Theorem B of \cite{mn} (when $p$ is odd). Some results on groups with $s(P)>0$ were obtained in Theorem D of \cite{mn}, which bounds the (minimal) number of generators of an odd $p$-group in terms of $s(P)$. For the block-theoretic purposes of \cite{mn},  these results are not strong enough when $p=3$.
		The following includes Questions 1 and 2 of \cite{mn}:
		
		\begin{que}
		Let $P$ be a $3$-group. Is it true that if $s(P)=1$ then the minimal number of generators of $P$ is $d(P)\leq 5$? Is it true that if $s(P)=2$ then $d(P)\leq 10$?
		\end{que}
		
		As discussed in Section 7 of \cite{mn}, an affirmative answer to these questions on $3$-groups, would allow to extend the block-theoretic Theorem A of \cite{mn} to $p=3$.
		
		We conclude this section by mentioning another problem we worked on twenty years ago: class-bounding sets. Let $\aS$ be a set of powers of a prime number $p$ containing $1$. Is there an absolute bound for the nilpotence class of a $p$-group $P$ with set of character degrees $\aS$? The sets $\aS$ for which such a bound exist are called class-bounding sets. It is known, for instance, that if $p\in\aS$, then $\aS$ is not class-bounding (see \cite{is}). On the other hand, if there exists an integer $a$ such that $\aS\subseteq\{1,p^a,\dots,p^{2a}\}$ then $\aS$ is class-bounding (this is Theorem A of \cite{im}). There are other results on this problem. We refer the reader to Section 2 of \cite{m03} and its references for more details. We are just as far as then from determining the class-bounding sets. 
		
		\section{From $p$-groups to $p$-solvable groups}
		
		We have already seen in the previous section an example of a problem on characters of $p$-solvable groups that is reduced to a problem on $p$-groups. In this section, we present two more problems on solvable (or $p$-solvable) groups in which $p$-groups play a very important role.

		One of the main problems on character degrees of solvable groups is the Isaacs-Seitz conjecture.  The Isaacs-Seitz conjecture asserts that the derived length of $G$ is $\dl(G)\leq|\cd(G)|$. (The origin of this conjecture is not clear. The first general bound on this problem is \cite{isa75}, where it is mentioned that this inequality is conjectured. The case when $|\cd(G)|\leq3$ had been proved before in \cite{ip, isa68}. According to p. 109 of \cite{nav10}, Isaacs attributed this problem to Seitz but it seems that Seitz was not fully aware  of having formulated the problem.) We refer the reader to Section 16 of \cite{mawo} for an account of the main results known on this around half a century old problem. Despite the fact that \cite{mawo} was published in 1993, there is not much progress on this problem to report on. The most remarkable results since then are due to T. Keller and do not directly aim at proving this conjecture, but an asymptotically stronger bound (a logarithmic bound). We again refer the reader to Section 2 of \cite{m03} for more details. We remark that in the case of $p$-groups, the Isaacs-Seitz conjecture follows from Taketa's argument  (Theorem 5.12 of \cite{isa}). However, even in this case we cannot report on much progress on the problem of finding a logarithmic bound for the derived length of a $p$-group $P$  in terms of $|\cd(P)|$ (Conjecture 2 of \cite{m03}). The only results that we are aware on this concern a special type of $p$-groups of maximal class (see \cite{krt, bw2}). 
	
		As mentioned in the Introduction, it seems that very often the number of character degrees of a non-abelian solvable group $G$ exceeds	the number of character degrees of the derived subgroup $G'$. 	Even if Question A has a negative answer, it would be interesting to understand the counterexamples.
		
		\begin{que}
		Is it possible to describe the solvable groups $G$ with the property that $|\cd(G')|\geq|\cd(G)|$?
		\end{que}

		The second problem we will consider in this section looks at relating the structure of a $p$-solvable group with the set of character degrees of a Sylow $p$-subgroup. It was proved in Theorem 5.2 of \cite{em} that if $p$ is an odd prime, $G$ is a solvable group, $P\in\Syl_p(G)$ and $\cd(P)\subseteq\{1,p^a,\dots,p^{2a}\}$ for some $a>1$ then the $p$-length of $G$ is at most $1$. In view of the formal similarity between this result and Theorem A of \cite{im} (which has already appeared in Section 2), we said in \cite{em} that a set $\aS$ of powers of $p$ containing $1$ bounds the $p$-length if whenever $G$ is a $p$-solvable group, $P\in\Syl_p(G)$ and $\cd(P)=\aS$, then the $p$-length of $G$ is at most $1$. The following is Question 5.3 of \cite{em}.

\begin{que}
Given a prime $p$, which are the sets of powers of $p$ that bound the $p$-length?
\end{que}

It would be interesting to confirm (or deny) the existence of strong relationships between the class-bounding sets and the sets that bound the $p$-length. 

\section{Solvable groups: orbits on modules}

Gasch\"utz's Theorem \ref{gas} allows to reduce the study of many problems on solvable groups to that of the study of orbits when a solvable group acts on a finite module. In this section, we will restrict ourselves to pointing out some of the applications of the following large orbit theorem. Given an integer $i$, $\bF_i(G)$ stands for the $i$th term of the ascending Fitting series of $G$. The Fitting height $h(G)$ of a group $G$ is the smallest integer $i$ such that $G=\bF_i(G)$.

\begin{thm}[Theorem E of \cite{mw}, Theorem 3.4 of \cite{yan09}]
\label{lo}
Let $V$ be a finite faithful completely reducible $G$-module (possibly of mixed characteristic), where $G$ is a solvable group. Then
\begin{enumerate}
\item
There exists $v\in V$ such that $\bC_G(v)\leq \bF_7(G)$.
\item
If $|GV|$ is odd then there exists $v\in V$ in a regular orbit of $\bF(G)$ such that $\bC_G(v)\leq \bF_2(G)$.
\end{enumerate}
\end{thm}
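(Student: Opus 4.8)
The plan is to induct on $|GV|$, reduce the completely reducible (possibly mixed-characteristic) case to the case where $V$ is irreducible and faithful, and then exploit the known structure of solvable primitive linear groups together with the large-orbit results already available in the literature (Gluck, Wolf, Espuelas, Dolfi). Write $F=\bF(G)$; since $\bC_G(F)\le F$, the group $G/F$ acts faithfully on the chief factors of $G$ lying in $F$, and the Fitting series $1<\bF(G)<\bF_2(G)<\cdots$ records how ``low'' a subgroup sits inside $G$. Thus (i) asks for $v\in V$ whose centralizer in $G$ is contained in the seventh term of the Fitting series of $G$, and (ii) asks, under the odd-order hypothesis, for a much stronger vector that is simultaneously regular for $\bF(G)$ and has centralizer inside $\bF_2(G)$.

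The first task is the reduction to the irreducible faithful case. Decompose $V=V_1\oplus\cdots\oplus V_n$ into irreducible $G$-submodules (allowing the $V_i$ to live in different characteristics) and choose vectors $v_i\in V_i$ successively so that the chain $G\ge\bC_G(v_1)\ge\bC_G(v_1,v_2)\ge\cdots$ drops as quickly as possible. The delicate point --- and the first place where genuine work is required --- is to arrange at each step that the current partial centralizer still acts faithfully and completely reducibly on a suitable submodule of the remaining summands, so that the inductive hypothesis can be re-applied and, crucially, the bound does not deteriorate from $7$ to something that grows with $n$ or with the number of characteristics involved. After this one may assume $V$ irreducible and faithful and, passing to the stabilizer of a block of imprimitivity if necessary, that $G$ acts primitively on $V$.

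For a solvable primitive $G\le\GL(V)$ the structure is very constrained: $\bF(G)$ is, modulo a cyclic centre, a product of groups of symplectic type acting homogeneously, and $G/\bF(G)$ embeds into a product of symplectic groups over small fields. Into this situation I would feed the large-orbit theorems: there is $v\in V$ with $|\bC_G(v)|$ small relative to $|G|$ --- in fact with $\bC_G(v)$ avoiding the top Fitting factor(s) of $G$ --- and by iterating this inside successive point stabilizers a bounded number of times, each step costing at least one Fitting level, one pushes the stabilizer into $\bF_7(G)$. The precise value $7$ is exactly the outcome of this bookkeeping, and making the constant \emph{absolute} rather than dependent on $h(G)$ is the heart of the matter: it is what forces one to use the sharp classification-based orbit results for primitive solvable linear groups rather than a crude induction, and to treat separately the handful of small configurations in which regular or large orbits genuinely fail.

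For part (ii), the additional ingredient is the theorem of Espuelas that a nilpotent group of odd order acting faithfully and completely reducibly on a finite module has a \emph{regular} orbit; applied to $F=\bF(G)$ this produces vectors lying in regular $F$-orbits. To improve $\bC_G(v)$ from ``meets $F$ trivially'' to ``lies in $\bF_2(G)$'' one passes to $G/F$: this group permutes the regular $F$-orbits, and one shows --- using that $|G|$ is odd, which eliminates the bad even-order linear groups --- that among them there is an $F$-orbit whose $G$-stabilizer maps into $\bF(G/F)=\bF_2(G)/F$. Since a vector in a regular $F$-orbit has $\bC_G(v)\cap F=1$, its centralizer injects into $G/F$ and hence lies in $\bF_2(G)$. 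The main obstacle throughout is not any single hard inequality but the organization: keeping ``faithful and completely reducible'' intact through every reduction while never letting the numerical Fitting-height bound drift, so that the induction closes with the absolute constants $7$ and $2$.
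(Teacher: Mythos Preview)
The paper does not contain a proof of this theorem. It is stated as a result imported verbatim from the literature: Theorem~E of \cite{mw} gives the statement with $\bF_9(G)$ in place of $\bF_7(G)$, and Theorem~3.4 of \cite{yan09} supplies the sharpening to $\bF_7(G)$; the paper merely records this attribution in the sentence immediately following the statement and then proceeds to use the theorem as a black box. There is therefore no in-paper argument against which your proposal can be compared.

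As for the proposal itself as a sketch of how the cited sources proceed: your outline is broadly in the right spirit --- reduction to the irreducible primitive case, analysis of solvable primitive linear groups, and invocation of regular/large-orbit results (Espuelas, Dolfi, Gluck, Wolf) --- and this is indeed the architecture of \cite{mw}. However, several of your transitions are only gestures rather than arguments. In particular, the step ``iterate inside successive point stabilizers, each step costing at least one Fitting level'' is not how the constant $7$ (or $9$ in \cite{mw}) arises; the actual proofs do a single careful orbit analysis of the primitive case against a detailed list of exceptions, not a repeated descent. Likewise, your reduction from completely reducible to irreducible does not work by ``choosing $v_i$ successively so the chain drops as quickly as possible'': the obstacle you yourself flag --- keeping the action of the running centralizer faithful and completely reducible without the bound degrading with $n$ --- is genuine, and in \cite{mw} it is handled by a direct product argument (taking $v=v_1+\cdots+v_n$ and using $\bF_k(G)\cap G_i\le\bF_k(G_i)$ for the images $G_i$ on the summands), not by an iterative descent. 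So while your plan names the right ingredients, it does not yet constitute a proof, and in any case there is nothing in the present paper to check it against.
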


Theorem E of \cite{mw} is exactly this statement with $\bF_9(G)$ instead of $\bF_7(G)$ in part (i). The improvement to $\bF_7(G)$ is Theorem 3.4 of \cite{yan09}. It is remarkable that is was proved in \cite{yan09} that $\bF_7(G)$ is the smallest term in the ascending Fitting series for which a statement like (i) holds. Essentially, this theorem allows to reduce many problems on arbitrary solvable groups to solvable groups of bounded (at most $8$) Fitting height. These problems concern mainly character degrees, zeros of characters and conjugacy class sizes. 

We start with applications to Huppert's $\rho$-$\sigma$ conjecture. Given a finite group $G$, $\rho(G)$ is the set of primes that divide the degree of some irreducible character of $G$. Given an integer $n$, $\pi(n)$ is the set of prime divisors of $n$ and $\sigma(G)=\max\{|\pi(\chi(1))|\mid\chi\in\Irr(G)\}$. Huppert's $\rho$-$\sigma$ conjecture asks the following (see p. 220 and Remark 17.9(a) of \cite{mawo}):
\begin{enumerate}
\item
Is there a function $f$ (independent of $G$) such that $|\rho(G)|\leq f(\sigma(G))$ for every finite group $G$?
\item
Is it true that $|\rho(G)|\leq2\sigma(G)$ if $G$ is solvable?
\end{enumerate}

We will discuss part (i) in Section 7. Now, we show that Theorem \ref{lo} implies the existence of a linear bound when $G$ is solvable.

\begin{thm}
Let $G$ be a solvable group. Then there exist $\chi_1,\dots,\chi_8\in\Irr(G)$ such that $\rho(G)=\pi(\chi_1\cdots\chi_8(1))$. In particular, $|\rho(G)|\leq8\sigma(G)$.
\end{thm}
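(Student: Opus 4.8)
The plan is to reduce to the case of bounded Fitting height using Theorem~\ref{lo}(i), and then handle that bounded case by a counting argument over the primes in $\rho(G)$. First I would set up the reduction: let $G$ be a solvable group and let $N=\bF_7(G)$, so that $h(G/N)\leq ?$—more precisely, by Theorem~\ref{lo}(i) applied to the action of $G/\bF(G)$ on the faithful completely reducible module $V=\bF(G/\Phi(G))=\bF(G)/\Phi(G)$ provided by Gasch\"utz's Theorem~\ref{gas}, there is a vector $v\in V$ with $\bC_{G/\bF(G)}(v)\leq \bF_7(G/\bF(G))=\bF_8(G)/\bF(G)$. The stabilizer of the corresponding linear character $\lambda\in\Irr(\bF(G)/\Phi(G))$ in $G$ then has Fitting height at most $8$, and by Clifford theory $\lambda$ induces to (a multiple of) an irreducible character of that stabilizer, which in turn induces to an irreducible character of $G$ whose degree is divisible by $|G:\mathrm{stab}|$. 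This is the standard ``large orbit $\Rightarrow$ large character degree'' mechanism; the upshot I want is: there exists $\psi\in\Irr(G)$ and a normal subgroup $L\trianglelefteq G$ with $h(G/L)\leq 8$ such that every prime dividing $|G:L|$ divides $\psi(1)$. (One must be slightly careful about mixed characteristic and about primes dividing $|\bF(G)/\Phi(G)|$ versus the acting group; the cleanest route is to work prime-by-prime on the Sylow subgroups of $\bF(G)$, but I will not grind through this.)

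Next I would deal with the bounded-Fitting-height quotient $\bar G=G/L$. Here the key classical input is that in a solvable group each step of the Fitting series contributes at least one new prime to $\rho$ through a suitable irreducible character: if $1=F_0<F_1<\cdots<F_h=\bar G$ is the Fitting series, then for each $i$ there is an irreducible character whose degree is divisible by every prime in $\pi(|F_{i+1}/F_i|)$ coprime to $|F_i|$, obtained again by inducing a linear character of $F_i$ through the action of $F_{i+1}$. Collecting these over the at most $8$ steps, together with $\psi$, gives at most a bounded number of characters covering all of $\rho(\bar G)$—but this naive count produces too many characters. The efficient argument, which is the one the paper is alluding to when it says the conjecture is ``inspired by the proofs,'' is instead to observe that $\rho(G)$ is covered by the primes appearing in the Sylow structure of the generalized Fitting/Fitting series layers, and that $\sigma(G)$ bounds how many primes a single character degree carries; one then chooses, greedily, characters $\chi_1,\dots,\chi_k$ so that $\pi(\chi_1(1)\cdots\chi_k(1))=\rho(G)$ with $k\leq 8$, using that the reduction above already localizes everything into a group of Fitting height $\leq 8$ and that in such a group a single well-chosen character detects all primes of one Fitting layer. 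Since $|\pi(\chi_j(1))|\leq\sigma(G)$ for each $j$, the bound $|\rho(G)|\leq 8\sigma(G)$ follows immediately.

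The main obstacle, and the step I would spend the most care on, is making the passage from ``large orbit'' to ``one character of $G$ detecting all the relevant primes'' genuinely yield only \emph{one} character per Fitting layer rather than one per prime: this requires that when $F_{i+1}/F_i$ acts on the relevant module, the stabilizer of a generic point has index divisible by \emph{all} the primes of that layer simultaneously, which is exactly the kind of statement that Theorem~\ref{lo} is designed to supply after the reduction (the stabilizer sits inside a bounded term of the Fitting series of the acting group). A secondary technical point is the interaction of primes dividing $|\bF(G)|$ itself with primes dividing $|G:\bF(G)|$, and the mixed-characteristic bookkeeping in Gasch\"utz's theorem; these are routine but must be tracked so that no prime in $\rho(G)$ is missed. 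Once the reduction is in place, the final counting—$8$ layers, $\sigma(G)$ primes per character—is immediate.
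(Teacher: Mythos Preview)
Your overall strategy matches the paper's: use Theorem~\ref{lo}(i) with Gasch\"utz to produce one irreducible character induced from deep in the Fitting series, and then cover the remaining primes layer by layer. But you have the direction of the reduction inverted, and this is not merely a typo---it propagates through the rest of your sketch. From the large orbit theorem applied to the action of $G/\bF(G)$ on $\bF(G)/\Phi(G)$, the inertia group $T$ of $\lambda$ satisfies $T\leq\bF_8(G)$, so the induced character has degree divisible by $|G:\bF_8(G)|$. The normal subgroup you want is therefore $L=\bF_8(G)$, with $h(L)\leq 8$ (not $h(G/L)\leq 8$), and the remaining primes of $\rho(G)$ are those dividing $|\bF_8(G)|$. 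The layer-by-layer work thus takes place along the Fitting series of $G$ itself below $\bF_8(G)$, not in a quotient $\bar G=G/L$; as written, your second paragraph is working in the wrong group.

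For the layer step the paper simply invokes Proposition~17.3 of \cite{mawo}: for each $i\geq 2$ there is $\varphi_i\in\Irr(\bF_i(G)/\bF_{i-2}(G))$ whose degree is divisible by every prime of $|\bF_i(G):\bF_{i-1}(G)|$, and any $\chi_i\in\Irr(G)$ lying over $\varphi_i$ inherits those prime divisors by Clifford theory. Your description ``inducing a linear character of $F_i$ through the action of $F_{i+1}$'' is not this mechanism---$\varphi_i$ is a character of a two-step quotient, not an induced linear character---and the qualifier ``coprime to $|F_i|$'' is unnecessary. One further character $\chi_1$ lying over a suitable $\varphi_1\in\Irr(\bF(G))$ picks up the primes $p$ for which $\bO_p(G)$ is nonabelian. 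There is no need for your greedy step, which in any case is not an argument (it asserts $k\leq 8$ without justification): the paper's count is simply one character induced from $\bF_8(G)$, six characters for the layers $i=2,\dots,7$ via Proposition~17.3, and one for $\bF(G)$, giving exactly eight, after which $|\rho(G)|\leq 8\sigma(G)$ is immediate.
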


\begin{proof}
		It follows from Theorem \ref{lo}(i) and Gasch\"utz's theorem that there exists $\mu\in\Irr(\bF_8(G))$ such that $\chi_8=\mu^G\in\Irr(G)$ (see Theorem 5.2 of \cite{yan09}). By Proposition 17.3 of \cite{mawo}, for every $i=2,\dots,7$ there exists $\varphi_i\in\Irr(\bF_i(G)/\bF_{i-2}(G))$ such that $\varphi_i(1)$ is divisible by all the primes in $|\bF_i(G):\bF_{i-1}(G)|$. Now, it follows from Clifford's theory that if $\chi_i\in\Irr(G)$ lies over $\varphi_i$ then $\pi(|\bF_i(G):\bF_{i-1}(G)|)\subseteq\pi(\chi_i(1))$. Finally, we take $\varphi_1\in\Irr(\bF(G))$ such that $\varphi_1(1)$ is divisible by all the primes $p$ such that $\bO_p(G)$ is not abelian. Let $\chi_1\in\Irr(G)$ lying over $\varphi_1$. It follows that $\rho(G)=\pi(\chi_1\cdots\chi_8(1))$, as desired.
		\end{proof}
		
		As with the Isaacs-Seitz conjecture, not much progress has been made on part (ii) of Huppert's conjecture  since the publication of \cite{mawo}. We refer the reader to Section 17 of \cite{mawo} for a summary of the main results on this topic. Until very recently,   the best known bound was  $|\rho(G)|\leq3\sigma(G)+2.$  This bound already appears in \cite{mawo}. Recently, this has been slightly improved to $|\rho(G)|\leq3\sigma(G)$ in \cite{ly} and in \cite{adp}. On the other hand, notice that while Huppert's conjecture just asks for the existence of an irreducible character of degree divisible by ``many" primes, the first part of our theorem provides the existence of a few characters whose degrees cover $\rho(G)$. We propose the following strong form of Huppert's $\rho$-$\sigma$ conjecture.
		
		\begin{que}
		\label{hupsol}
		Let $G$ be a solvable group. Do there exist $\chi_1,\chi_2\in\Irr(G)$ such that  $\rho(G)=\pi(\chi_1\chi_2(1))$?
		\end{que}
		
		Also, we note that for odd order groups Theorem \ref{lo} provides the best known bounds in the $\rho$-$\sigma$ problem, both for character degrees and for conjugacy class sizes. We refer the reader to Section 2.2 of \cite{mw}. 
		
		A number of variations of the $\rho$-$\sigma$ conjecture have been studied in the literature. We think that the next variation would help us to get a better understanding of  the set of character degrees of finite groups. As usual, $\pi$ is a set of primes.  We will write $\rho_{\pi}(G)$ to denote the set of primes that divide the degree of some irreducible character of $G$ of $\pi$-degree. Put $\sigma_{\pi}(G)=\max\{|\pi(\chi(1))|\mid\chi\in\Irr_{\pi}(G)\}$, where $\Irr_{\pi}(G)$ is the set of irreducible characters of $G$ of $\pi$-degree. 
		
		\begin{que}
		Let $\pi$ be a set of primes. 
		\begin{enumerate}
		\item
		Is it true that there exists a real-valued function $f$ such that $|\rho_{\pi}(G)|\leq f(\sigma_{\pi}(G))$? Is it true that even $|\rho_{\pi}(G)|\leq3\sigma_{\pi}(G)$?
		\item
		Is it true that if $G$ is solvable then $|\rho_{\pi}(G)|\leq 2\sigma_{\pi}(G)$?
		\end{enumerate}
		\end{que}
		
		Note that as an immediate consequence of the It\^o-Michler theorem, we have that if $G$ is any finite group, $\rho(G)$ is the set of prime divisors $p$ of $|G|$ such that $G$ does not have a normal abelian Sylow $p$-subgroup. It does not seem easy however to get a group-theoretic characterization of $\rho_{\pi}(G)$. As a consequence of Theorem A of \cite{nrs}, we can get such a characterization for solvable groups (although a rather technical one).

		Another longstanding conjecture on solvable groups is the Isaacs-Navarro-Wolf conjecture on zeros of characters. Given an element $x\in G$, we say that $x$ is nonvanishing if $\chi(x)\neq0$ for every $\chi\in\Irr(G)$. It was conjectured in \cite{inw} that if $G$ is solvable and $x\in G$ is nonvanishing, then $x\in \bF(G)$. As we have just seen in the previous proof, if $G$ is solvable there exists $\chi\in\Irr(G)$ that is induced from $\bF_8(G)$. We thus have:
		
		\begin{thm}
		Let $G$ be a nonsolvable group and $x\in G$ be a nonvanishing element. Then $x\in \bF_8(G)$.
		\end{thm}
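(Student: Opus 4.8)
The plan is to reduce the statement, via the solvable radical $R=R(G)$, to the character already constructed in the previous proof. First I would record the bookkeeping identity $\bF_i(G)=\bF_i(R)$ for all $i$: one has $\bF(G)=\bF(R)$ (the largest nilpotent normal subgroup of $G$ is solvable, hence contained in $R$, and it is then also the largest nilpotent normal subgroup of $R$), and since $R/N=R(G/N)$ for every solvable normal $N\trianglelefteq G$, an easy induction gives $\bF_{i+1}(G)/\bF_i(G)=\bF(G/\bF_i(G))=\bF(R/\bF_i(R))=\bF_{i+1}(R)/\bF_i(R)$. In particular $\bF_8(G)=\bF_8(R)$, and this is a normal subgroup of $G$.

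For the first step I would show that a nonvanishing $x$ must lie in $R$. If $x\notin R$, then $xR$ is a nontrivial element of the quotient $G/R$, which has trivial solvable radical, and I would invoke the fact that a finite group with trivial solvable radical has no nontrivial nonvanishing element (equivalently, apart from the identity every element is a zero of some irreducible character). Granting this, there is $\overline\chi\in\Irr(G/R)$ with $\overline\chi(xR)=0$; its inflation to $G$ is an irreducible character of $G$ vanishing at $x$. Hence a nonvanishing element lies in $R$.

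For the second step I would apply the construction of the previous proof to the solvable group $R$: by Theorem \ref{lo}(i) together with Gasch\"utz's theorem there is $\mu\in\Irr(\bF_8(R))$ such that $\psi:=\mu^R\in\Irr(R)$. Since $\bF_8(R)=\bF_8(G)$ is normal in $G$, every $G$-conjugate of $\psi$ has the form $(\mu^g)^R$ with $\mu^g\in\Irr(\bF_8(R))$, hence is induced from $\bF_8(R)$ and so vanishes on $R\setminus\bF_8(R)$. Choosing $\chi\in\Irr(G)$ lying over $\psi$ (any irreducible constituent of $\psi^G$ works), Clifford's theorem makes $\chi|_R$ a multiple of a sum of $G$-conjugates of $\psi$, so $\chi$ vanishes on $R\setminus\bF_8(R)$ as well. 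Combining the two steps: if $x$ is nonvanishing then $x\in R$ by the first step, and $\chi(x)\ne0$ forces $x\notin R\setminus\bF_8(R)$, i.e. $x\in\bF_8(R)=\bF_8(G)$.

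The hard part will be the fact used in the first step, that a finite group $L$ with $R(L)=1$ has no nontrivial nonvanishing element. For a self-contained proof one would use the decomposition $L\le\Gamma=HN$ of such groups recalled above: an element of $L$ that permutes the simple direct factors of $\bF^*(L)$ nontrivially should be handled by Mattarei's lemma, whose explicit value formula is exactly what is needed here (it converts a zero of an irreducible character of one simple factor into a zero of the corresponding extended character of $\Gamma$), while an element acting componentwise should be handled by the extension lemma (Lemma 4.2 of \cite{m07}) together with the classification-based fact that in every nonabelian simple group each nontrivial element is a zero of some irreducible character. The remaining ingredients — the Fitting-series identity and the Clifford-theoretic descent in the second step — are routine.
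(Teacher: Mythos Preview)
The printed hypothesis is almost certainly a typo: the sentence immediately preceding the theorem (``if $G$ is solvable there exists $\chi\in\Irr(G)$ that is induced from $\bF_8(G)$. We thus have:'') makes clear that the intended hypothesis is that $G$ be \emph{solvable}. Under that reading the paper's proof is that single line: the character $\chi_8=\mu^G$ produced in the previous proof is induced from $\bF_8(G)$ and therefore vanishes on $G\setminus\bF_8(G)$, so a nonvanishing element must lie in $\bF_8(G)$.

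You have instead attacked the statement for arbitrary $G$. Your identity $\bF_i(G)=\bF_i(R)$ is correct, and your Step~2 is exactly the paper's argument applied to the solvable group $R=R(G)$ and then lifted to $G$ via Clifford; when $G$ is solvable this already \emph{is} the intended proof. Step~1, however --- that a group $L$ with trivial solvable radical has no nontrivial nonvanishing element --- is a separate and substantial assertion, and your sketch does not establish it. In the ``componentwise'' case an element $x\in K$ may project trivially to every simple factor $S_i$ (it can be a pure outer automorphism on some factor), so the classification fact that every $1\ne s\in S$ is a zero of some $\varphi\in\Irr(S)$ does not apply. In the ``permuting'' case, Mattarei's value formula for the extension of a diagonal $\alpha\times\cdots\times\alpha$ is a product of values of $\alpha$ at the cycle-products determined by $x$; for a pure permutation these cycle-products are all $1$, and no choice of $\alpha$ produces a zero. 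What one actually needs is an argument of the flavour of Section~8 --- finding $\theta\in\Irr(\bF^*(L))$ none of whose $L$-conjugates is fixed by $x$, together with extendibility of $\theta$ to its stabiliser so that the induced character of $L$ is irreducible and vanishes at $x$ --- and the paper itself notes (via the $\Omega_8^+(2)$ example) that such orbit statements come with parity restrictions. So Step~1 is far from routine and is certainly not what the paper has in mind.
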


		Recall that Gluck's conjecture asserts that if $G$ is a solvable group then $|G:\bF(G)|\leq b(G)^2$, where $b(G)$ is the largest degree of the irreducible characters of $G$. 
		The following is a simultaneous generalization of Gluck's conjecture (probably the remaining among the main open problems on character degrees of solvable groups that has not been mentioned yet) and the Isaacs-Navarro-Wolf conjecture appears as Question 2.2 of \cite{mw}
		
		\begin{que}
		\label{glusol}
		Let $G$ be a solvable group. Do there exist $\chi_1,\chi_2\in\Irr(G)$ such that $|G:\bF(G)|$ divides $\chi_1(1)\chi_2(1)$ and $\chi_1\chi_2$ vanishes on $G-\bF(G)$? 
		\end{que}
		
		There are many other applications of Theorem \ref{lo}. We refer the reader to \cite{mw} and to the papers that quote \cite{mw}.	
			
		\section{Simple groups and their automorphisms: extendible characters}

		Nowadays, the relevance of the following result is well-known.
		
		\begin{thm}
		\label{extsim}
		Let $S$ be a nonabelian simple group. Then there exists a nonprincipal $\varphi\in\Irr(S)$ that extends to $\Aut(S)$. 
		\end{thm}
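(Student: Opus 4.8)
\emph{Proof proposal.} The plan is to invoke the classification of finite simple groups and to exhibit, in each family, a concrete nonprincipal irreducible character that is visibly $\Aut(S)$-fixed and that one can show extends. First I would handle the alternating groups: if $S=\Alt(n)$ with $n\ge 5$ and $n\ne 6$, then $\Aut(S)=\Sym(n)$, and I would take $\chi=\chi_{(n-1,1)}\in\Irr(\Sym(n))$, the nonprincipal constituent of the natural permutation character; since the partition $(n-1,1)$ is not self-conjugate, $\chi$ restricts irreducibly to $\Alt(n)$ (equivalently $[\chi_{\Alt(n)},\chi_{\Alt(n)}]=1$), so $\varphi:=\chi_{\Alt(n)}$ is nonprincipal and extends to $\Sym(n)=\Aut(S)$ by construction. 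The case $S=\Alt(6)$ has to be singled out, since $|\operatorname{Out}(\Alt(6))|=4$ and the degree-$5$ characters are swapped by the exceptional outer automorphism; here I would instead use the unique irreducible character of degree $9$ --- the Steinberg character of $\PSL_2(9)$ --- which is $\Aut(\Alt(6))$-invariant by uniqueness, its extendibility being subsumed by the Lie-type case below.

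For the sporadic groups (and the Tits group) there are only finitely many $S$, each with $|\operatorname{Out}(S)|\le 2$, so I would simply read off from the character tables in the Atlas, in every case, a nonprincipal irreducible character that is $\operatorname{Out}(S)$-fixed and extends to $\Aut(S)$. The substantial case is that of the groups of Lie type, where the plan is to use the Steinberg character. For $S$ of Lie type in defining characteristic $p$ with associated finite reductive group $G^F$ ($G$ simply connected simple, $F$ a Steinberg endomorphism), the Steinberg character $\operatorname{St}$ of $G^F$ has degree $|G^F|_p$, is rational valued, and is trivial on $\bZ(G^F)$; it restricts to an irreducible character of $[G^F,G^F]$ that deflates to a nonprincipal $\varphi\in\Irr(S)$. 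Moreover $\operatorname{St}$ is fixed by all inner, diagonal, field and graph automorphisms, so $\varphi$ is $\Aut(S)$-invariant; and by Feit's theorem on the extendibility of Steinberg characters, $\operatorname{St}$, and with it $\varphi$, extends to $\Aut(S)$.

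The hard part is precisely this last step. The alternating and sporadic cases are essentially bookkeeping, whereas producing a single canonical character that survives every automorphism \emph{and} extends to the whole of $\Aut(S)$ uniformly across all Lie-type groups is where the real content sits; one must be especially careful with the low-rank coincidences and the exceptional outer automorphism groups --- $\PSL_2(q)$, the isomorphisms $\PSL_2(7)\cong\PSL_3(2)$ and $\Alt(8)\cong\PSL_4(2)$, the triality group $D_4(q)$ with its $\Sym(3)$ of graph automorphisms, and the Suzuki and Ree groups --- where either $\operatorname{Out}(S)$ or the Steinberg character itself behaves atypically.
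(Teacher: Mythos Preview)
The paper does not supply a self-contained proof of this theorem; it simply records that the result first appeared as Lemma~2.11 of \cite{m05} and that a proof is given as Lemma~4.2 of \cite{m07}. Your proposal reproduces precisely the argument of that reference: invoke the classification, handle $\Alt(n)$ via the restriction of $\chi_{(n-1,1)}$ (with $\Alt(6)$ treated separately), check sporadic groups from the Atlas, and for groups of Lie type use the Steinberg character together with the theorem of Schmid and Feit that it extends to $\Aut(S)$.
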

		
		This appeared first as Lemma 2.11 of \cite{m05}. A proof appeared in Lemma 4.2 of \cite{m07}. This result has been used multiple times. Sometimes, (partially) stronger versions of this result have been needed. Our goal in this section is to collect these stronger versions. 
		
		Often we want the degree of the extending character to be large. In this context, probably the most remarkable result is the following, which improves on an earlier result in \cite{chmn}. 
		
		\begin{thm}[Theorem 2.1 of \cite{hls}]
		Let $S$ be a nonabelian simple group. Suppose that $S\not\cong\PSL_2(q)$. Then there exists $\varphi\in\Irr(S)$ that extends to $\Aut(S)$ and $\varphi(1)\geq|S|^{3/8}$.
		If $S\cong\PSL_2(q)$ then there exists $\varphi\in\Irr(S)$ that extends to $\Aut(S)$ and $\varphi(1)\geq|S|^{1/3}$.
		\end{thm}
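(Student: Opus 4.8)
The plan is to go through the classification of finite simple groups case by case, in each case exhibiting an explicit irreducible character (or family of characters) that is known to extend to $\Aut(S)$ and whose degree is at least $|S|^{3/8}$ (respectively $|S|^{1/3}$ for $\PSL_2(q)$). The natural starting point is the existence statement already guaranteed by Theorem \ref{extsim}; what is new here is the degree bound, so the work is in controlling which extendible characters are available and how large they can be taken. For the alternating groups $\Alt(n)$ one would use the fact that characters of $\Sym(n)$ restrict irreducibly to $\Alt(n)$ unless the partition is self-conjugate, and that such restrictions are $\Aut(\Alt(n))$-invariant (with the small exceptions $n=6$ handled separately); choosing a partition that is far from a hook, e.g. roughly a two-row or staircase shape, gives a character of degree comparable to $\sqrt{n!}$, which dwarfs $|S|^{3/8}=((n!/2))^{3/8}$ for $n$ large, leaving only finitely many small $n$ to check by hand or against \cite{atl}. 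The sporadic groups and their covers are a finite check: for each one consult the Atlas for a character that extends to the (index $\le 2$) automorphism group and verify the numerical inequality directly.

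The heart of the argument is the groups of Lie type. Here the main tool is Lusztig's classification of irreducible characters and the known results on extendibility of characters of $S=G^F/Z$ to $\Aut(S)$, which for most series is controlled by the action of diagonal, field, and graph automorphisms. The standard candidates are the unipotent characters: by work of Malle and others, every unipotent character of a simple group of Lie type is invariant under $\Aut(S)$ and extends to it (with a short list of exceptions, notably in types $B_2$, $G_2$, $F_4$ in small characteristic, and the triality issues in $D_4$), so it suffices to locate a unipotent character of degree $\ge |S|^{3/8}$. The Steinberg character has degree equal to the order of a Sylow $p$-subgroup, which is roughly $|S|^{1/2}$ and always extends, so for groups of fixed bounded rank (or more precisely whenever $|S_p|\ge |S|^{3/8}$, i.e. whenever the $p$-part of $|S|$ is not too small relative to the whole) the Steinberg character already does the job. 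The remaining difficulty is the groups where $|S_p|<|S|^{3/8}$ — these are groups of large rank over small fields, where $|S|$ grows like $q^{\dim}$ but the Sylow $p$-subgroup has order only $q$ to the number of positive roots; here one instead invokes a large-degree unipotent character coming, say, from a symbol or partition of near-maximal "defect," whose degree is a product of cyclotomic polynomials evaluated at $q$ of total degree close to $\dim G - \operatorname{rank} G$, and one checks this beats $|S|^{3/8}$.

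The key computational obstacle, and the step I expect to require the most care, is precisely this last reduction for classical groups of unbounded rank: one must produce, uniformly in the rank $n$ and the field size $q$, a unipotent character whose degree polynomial has degree (as a polynomial in $q$) at least $\tfrac{3}{8}(\dim G - \operatorname{rank} G)$ — equivalently at least $\tfrac{3}{8}\log_q|S|$ up to bounded error — while staying in the list of characters known to be $\Aut(S)$-invariant and extendible. For $\PSL_2(q)$ this optimization genuinely fails to reach the $3/8$ threshold: the largest degree available among characters extending to $\PGL_2(q)\rtimes\langle\phi\rangle$ is about $q$, i.e. roughly $|S|^{1/3}$ (the Steinberg character of degree $q$ or a character of degree $q\pm 1$), which is why $\PSL_2(q)$ is singled out with the weaker bound, and one simply verifies $q \ge (q(q^2-1)/\gcd(2,q-1))^{1/3}$ directly. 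For the few exceptional-type and small-rank exceptions where the generic unipotent character argument breaks, one falls back on an explicit Atlas or CHEVIE computation. Finally one assembles the case analysis, double-checking that the finitely many small groups excluded from each generic argument all satisfy the stated inequality (or are absorbed into the $\PSL_2$ clause).
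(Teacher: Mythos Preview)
The paper does not give a proof of this theorem at all: it is quoted verbatim as Theorem~2.1 of \cite{hls} and used as a black box in the surrounding survey. There is therefore no ``paper's own proof'' to compare your proposal against; you are effectively sketching a proof of the Hung--Lewis--Schaeffer~Fry result itself.

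Your outline is broadly the right shape for how such results are proved (CFSG case analysis; Steinberg and other unipotent characters for Lie type; Atlas checks for sporadics and small cases). But one piece of your heuristic is backwards and would mislead you if you tried to carry it out. You write that the difficult case is ``groups of large rank over small fields, where \dots\ the Sylow $p$-subgroup has order only $q$ to the number of positive roots.'' In fact, for a simple group of Lie type of rank $r$ one has $\dim G = r + 2N$ with $N$ the number of positive roots, so the Steinberg degree $q^N$ satisfies $q^N \ge |S|^{3/8}$ roughly when $\dim G \ge 4r$, which holds for \emph{all} classical and exceptional types once $r\ge 2$ (and comfortably so as $r$ grows). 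The genuine failure is at rank~$1$: for $\PSL_2(q)$ one has $N=1$, $\dim G=3$, and $q<|S|^{3/8}\approx q^{9/8}$, which is exactly why $\PSL_2(q)$ is singled out with the $|S|^{1/3}$ bound. The borderline cases requiring care are therefore small-rank groups (and the usual finite list of small-$q$ exceptions), not large-rank ones; your proposed ``large-degree unipotent character for unbounded rank'' workaround is addressing a nonexistent difficulty.
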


On other occasions, we need more than one character that extends and we want the degrees to be different (or even coprime).

		\begin{thm}[Theorems 3 and 4 of \cite{bclp}]
		\label{bclp}
	Let $S$ be a nonabelian simple group that is not of Lie type. Then there exist nonprincipal $\varphi_1,\varphi_2\in\Irr(S)$ with $(\varphi_1(1),\varphi_2(1))=1$ that extend to $\Aut(S)$.
		\end{thm}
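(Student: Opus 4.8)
The plan is to argue by the classification of finite simple groups: a nonabelian simple group that is not of Lie type is either an alternating group $\Alt(n)$ with $n\geq5$ or one of the $26$ sporadic simple groups (I take the Tits group ${}^2F_4(2)'$ to be of Lie type; if one prefers, it is handled exactly like the sporadic groups below). The one general tool I would invoke, mainly for the sporadic groups, is that when the quotient $\Aut(S)/S$ is cyclic --- in particular whenever $|\operatorname{Out}(S)|\leq2$ --- an $\Aut(S)$-invariant $\varphi\in\Irr(S)$ automatically extends to $\Aut(S)$ (Corollary 11.22 of \cite{isa}); for such $S$ it therefore suffices to produce invariant irreducible characters of coprime degree.

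For the alternating groups, I would first discard $\Alt(5)\cong\PSL_2(5)$, $\Alt(6)\cong\PSL_2(9)$ and $\Alt(8)\cong\PSL_4(2)$, which are of Lie type, and treat the remaining cases $n=7$ and $n\geq9$, where $\Aut(\Alt(n))=\Sym(n)$. For a partition $\lambda$ of $n$, the restriction $\chi^{\lambda}|_{\Alt(n)}$ of the corresponding irreducible character of $\Sym(n)$ is irreducible precisely when $\lambda$ is not self-conjugate, and then $\chi^{\lambda}$ is itself an extension of $\chi^{\lambda}|_{\Alt(n)}$ to $\Sym(n)$. I would simply take $\lambda=(n-2,2)$ and $\mu=(n-2,1,1)$: for $n\geq7$ neither partition is self-conjugate and neither equals $(n)$ or $(1^n)$, so both restrict to nonprincipal irreducible characters of $\Alt(n)$ that extend to $\Sym(n)=\Aut(\Alt(n))$. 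By the hook-length formula their degrees are $\chi^{\lambda}(1)=n(n-3)/2$ and $\chi^{\mu}(1)=\binom{n-1}{2}=(n-1)(n-2)/2$, two integers that differ by $1$ and are hence coprime. This settles the whole alternating family at once.

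For the sporadic groups the plan is a finite verification using the Atlas \cite{atl}, noting that $|\operatorname{Out}(S)|\leq2$ in all cases. If $\operatorname{Out}(S)=1$, then $\Aut(S)=S$ and it is enough to point to any two nonprincipal irreducible characters of coprime degree (for instance, $M_{11}$ has irreducible characters of degrees $10$ and $11$). If $|\operatorname{Out}(S)|=2$, I would exhibit two nonprincipal irreducible characters fixed by the outer automorphism with coprime degrees (for instance, the characters of degrees $45$ and $176$ in $M_{12}$), which then extend to $\Aut(S)$ by the fact recalled above.

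The main obstacle I anticipate is purely the bookkeeping in the sporadic case. The alternating argument is uniform and elementary once the pair $(n-2,2)$, $(n-2,1,1)$ has been identified, but for the larger sporadic groups one must actually inspect the Atlas to be certain that the outer automorphism fixes two irreducible characters whose degrees happen to be coprime --- it is not enough to take, say, the two smallest nonlinear degrees, as already the example of $M_{12}$ shows. A secondary point that needs care is keeping the exceptional isomorphisms among small alternating and Lie type groups straight, so that no simple group is omitted or counted twice.
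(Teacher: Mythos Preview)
The paper does not supply its own proof of this statement: it is quoted as Theorems~3 and~4 of \cite{bclp} in an expository list of known extendibility results, with no argument given. So there is nothing in the paper to compare your proposal against.

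That said, your outline is correct and is precisely the shape of the argument in \cite{bclp}: split by the classification into alternating groups and sporadic groups, handle the sporadics by Atlas inspection together with the fact that $|\operatorname{Out}(S)|\leq 2$ makes invariance equivalent to extendibility (Corollary~11.22 of \cite{isa}), and give a uniform argument for $\Alt(n)$. Your specific choice of the pair $(n-2,2)$ and $(n-2,1,1)$ is particularly clean, since the degrees $n(n-3)/2$ and $(n-1)(n-2)/2$ are consecutive integers and hence coprime for every $n\geq7$, and neither partition is self-conjugate in that range; this disposes of all alternating groups not already of Lie type in one line. The caveats you flag --- keeping track of the exceptional isomorphisms $\Alt(5)\cong\PSL_2(5)$, $\Alt(6)\cong\PSL_2(9)$, $\Alt(8)\cong\PSL_4(2)$, and the need to actually verify in the Atlas that two \emph{invariant} characters of coprime degree exist for each sporadic group with $|\operatorname{Out}(S)|=2$ --- are exactly the points where care is required, and there are no hidden difficulties beyond them.
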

		
		For groups of Lie type, we have the following variation of Theorem \ref{bclp}.
		
		\begin{thm}[Proposition 3.7 of \cite{mm07}]
		Let $S$ be a nonabelian simple group. Assume that $S\not\cong\PSL_2(3^f)$.  Then there exist nonprincipal $\varphi_1,\varphi_2\in\Irr(S)$ with $\varphi_1(1)\neq\varphi_2(1)$ that extend to $\Aut(S)$.
		\end{thm}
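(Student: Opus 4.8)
The plan is to peel off the groups that are not of Lie type, where Theorem~\ref{bclp} does everything, and then for the groups of Lie type to take the Steinberg character as one of the two characters and construct a second character of a different degree more or less by hand, with $\PSL_2(q)$ as the delicate endpoint.

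First, if $S$ is an alternating or sporadic group that is not isomorphic to a group of Lie type, then Theorem~\ref{bclp} already supplies nonprincipal $\varphi_1,\varphi_2\in\Irr(S)$ that extend to $\Aut(S)$ with $(\varphi_1(1),\varphi_2(1))=1$; coprime degrees are in particular distinct, so there is nothing to do. The low-degree coincidences $\Alt(5)\cong\PSL_2(4)$ and $\Alt(8)\cong\PSL_4(2)$ fall into the Lie type case treated below, while $\Alt(6)\cong\PSL_2(9)=\PSL_2(3^{2})$ is excluded by hypothesis. So from now on assume $S$ is simple of Lie type in defining characteristic $p$, and set $\varphi_1=\mathrm{St}_S$, the Steinberg character, of degree $|S|_p$: it is the unique irreducible character of $S$ of its degree, it is rational-valued and $\Aut(S)$-invariant, and it is classical that it extends to $\Aut(S)$. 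It then remains to find a nonprincipal $\varphi_2\in\Irr(S)$ extending to $\Aut(S)$ with $\varphi_2(1)\neq|S|_p$.

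A cheap observation first: Theorem~\ref{extsim} already gives \emph{some} nonprincipal extendible $\varphi$, and if $\varphi(1)\neq|S|_p$ we are done with $\varphi_2=\varphi$, so only the groups for which this produces nothing beyond $\mathrm{St}_S$ need attention. When $S$ is not of type $A_1$ I would take for $\varphi_2$ a unipotent character: every unipotent character of $S$ is invariant under the inner–diagonal automorphisms, the field and graph automorphisms permute the finite set of unipotent characters, and a unipotent character extends to its stabiliser in $\Aut(S)$ (with only a handful of well-understood exceptions, none of them an obstacle here). One checks that such an $S$ has a unipotent character $\psi$ with $1\neq\psi(1)\neq|S|_p$ fixed by all field and graph automorphisms — for the classical families one exhibits the symbol (e.g. one attached to a symmetric combinatorial datum, which the graph automorphism fixes), and for the exceptional types, including the Suzuki and Ree groups where the nontrivial non-Steinberg unipotent characters form small complex-conjugate sets that an odd-order field automorphism cannot move, this is a short finite check — and takes $\varphi_2=\psi$. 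When $S\cong\PSL_2(q)$, $q=p^{f}$ with $p\neq3$, the only nontrivial unipotent character is $\mathrm{St}_S$, so instead I would use a character of degree $q\pm1$. Recall that $\PSL_2(q)$ has an irreducible character $\chi_\lambda$ of degree $q+1$ for each nontrivial character $\lambda$ with $\lambda\neq\lambda^{-1}$ of a split maximal torus of order $(q-1)/\gcd(2,q-1)$, and analogous characters of degree $q-1$ attached to the nonsplit torus of order $(q+1)/\gcd(2,q-1)$; all of these are invariant under the diagonal automorphism, and a field automorphism carries the parameter $\lambda$ to $\lambda^{p}$, so $\chi_\lambda$ is fixed by the whole group of field automorphisms once $\lambda$ has order $3$. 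Since $p\neq3$, exactly one of $q-1$, $q+1$ is divisible by $3$, so a parameter of order $3$ lives in the relevant torus; the resulting $\varphi_2$ has degree $q\pm1\notin\{1,q\}$, is $\Aut(S)=\mathrm{P\Gamma L}_2(q)$-invariant, and — one last verification, the only place a cohomological obstruction could a priori intervene being $f$ even — extends to $\Aut(S)$. When $q=3^{f}$ the construction collapses (an order-$3$ torus parameter would sit in the defining characteristic), and in fact one checks directly that $\PSL_2(3^{f})$ has no two nonprincipal extendible characters of distinct degrees at all, so its exclusion is genuine.

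The real content is the Lie type case, and the hard part is twofold: verifying that the chosen second character genuinely \emph{extends} to the full automorphism group (not merely that it is $\Aut(S)$-invariant), uniformly across the classical families and for every exceptional type — on the route above this rests on the nontrivial fact that unipotent characters extend to their stabilisers — and the $\PSL_2(q)$ bookkeeping, where one must follow the field-automorphism action on the torus parameters closely enough both to produce a workable degree-$(q\pm1)$ character when $p\neq3$ and to see that $q=3^{f}$ is precisely the family where no such character exists.
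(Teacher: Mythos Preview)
The paper does not contain a proof of this statement: Section~6 is a catalogue of known extendibility results quoted from the literature, and this theorem is simply cited as Proposition~3.7 of \cite{mm07} with no argument given. There is therefore no proof in the present paper to compare your proposal against.

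That said, your outline is the standard route and is almost certainly what \cite{mm07} does: Steinberg as $\varphi_1$ for Lie type, a suitable $\Aut(S)$-stable unipotent character as $\varphi_2$ in rank $\geq 2$ (this is exactly where Theorems~2.4 and~2.5 of \cite{mal08}, also quoted in Section~6, come in), and an explicit degree-$(q\pm 1)$ character for $\PSL_2(q)$ built from a torus parameter of order $3$ when $p\neq 3$. A few of your steps are asserted rather than proved --- the dismissal of the exceptions in \cite{mal08}, the invariance of the degree-$(q\pm 1)$ characters under the diagonal automorphism, and the passage from invariance to genuine extension in the $\PSL_2$ case (your ``one last verification'') --- but these are routine checks, and the architecture is correct.
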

		
		Later, it was proved in \cite{cn} that if for any prime power $q$, $S\neq\PSL_2(q)$ then we can choose $\varphi_1$ and $\varphi_2$ in such a way that $\varphi_1(1)/\varphi_2(1)>|S|^{1/14}$. 
		
		It is easy to see that in the groups $\PSL_2(3^f)$ the Steinberg character is the unique nonprincipal irreducible character that extends to $\Aut(\PSL_2(3^f))$. Nevertheless, we have the following for arbitrary simple groups.
		
		\begin{thm}[Corollary 3.8 of \cite{mm07}]
		Let $S$ be a nonabelian simple group.   Then there exist nonprincipal $\varphi_1,\varphi_2\in\Irr(S)$ of different degrees such that $\varphi_1$ extends to $\Aut(S)$ and $\varphi_2$ extends to its inertia subgroup in $\Aut(S)$.
		\end{thm}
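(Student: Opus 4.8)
The plan is to split into two cases according to whether $S$ is a group of Lie type of the form $\PSL_2(3^f)$ or not, and to reduce the general statement to the previously stated results. If $S\not\cong\PSL_2(3^f)$, then Proposition 3.7 of \cite{mm07} already gives nonprincipal $\varphi_1,\varphi_2\in\Irr(S)$ of different degrees that \emph{both} extend to $\Aut(S)$; since extending to $\Aut(S)$ is a fortiori extending to the inertia subgroup in $\Aut(S)$, the conclusion holds trivially in this case. So the entire content of the corollary is the remaining case $S\cong\PSL_2(3^f)$ with $f\geq 1$ (note $\PSL_2(3)\cong\Alt(4)$ is not simple and $\PSL_2(9)\cong\Alt(6)$ should be checked separately or absorbed into the exceptional-isomorphism bookkeeping).

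For $S=\PSL_2(q)$ with $q=3^f$, the plan is to use the explicit, classical character table of $\PSL_2(q)$ together with the known action of $\Aut(S)=\PGL_2(q)\rtimes\langle\sigma\rangle$, where $\sigma$ is the field automorphism of order $f$. First I would take $\varphi_1=\mathrm{St}$, the Steinberg character of degree $q$: it is rational-valued, $\PGL_2(q)$-invariant and $\sigma$-invariant, and it is well known to extend to $\Aut(S)$ (indeed it extends to $\PGL_2(q)$ and to the full automorphism group). For $\varphi_2$ I would take one of the irreducible characters of degree $q-1$ (the ``$\eta$-type'' characters coming from nontrivial characters of the torus of order $q+1$, induced up), whose degree $q-1\neq q=\varphi_1(1)$. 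These characters are permuted by $\PGL_2(q)$ and by $\sigma$; the point is that I only need $\varphi_2$ to extend to its \emph{inertia} subgroup $T=I_{\Aut(S)}(\varphi_2)$ in $\Aut(S)$, not to all of $\Aut(S)$. Since $T/S$ is a subgroup of the (metacyclic, hence in particular with cyclic-by-cyclic structure) group $\Aut(S)/S$, and the relevant Schur multiplier obstructions for $\PSL_2(q)$ over these small quotients vanish, $\varphi_2$ extends to $T$; alternatively one invokes the general fact (e.g. via Gallagher/Clifford theory, or the explicit computations in \cite{m07} and \cite{mm07}) that every irreducible character of $\PSL_2(q)$ extends to its inertia group in $\Aut(\PSL_2(q))$.

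The main obstacle, and the step requiring actual care, is verifying that the chosen $\varphi_2$ really does extend to its inertia subgroup in $\Aut(\PSL_2(3^f))$: one must control the cohomology class $[\varphi_2]\in H^2(T/S,\CC^\times)$, which means understanding precisely how the diagonal automorphism in $\PGL_2(q)$ and the field automorphism $\sigma$ interact on the degree-$(q-1)$ characters, and ruling out a nontrivial obstruction. When $\varphi_2$ is already $\Aut(S)$-invariant this reduces to the vanishing of $H^2$ of a cyclic-by-cyclic group with the trivial-ish action, which holds for cyclic groups and extends here; when $\varphi_2$ has a smaller inertia group the obstruction group is even smaller. A clean way to organize this is to note that $\Aut(\PSL_2(q))/\PSL_2(q)$ has a normal cyclic subgroup (generated by the diagonal automorphism, order $\gcd(2,q-1)=2$ since $q$ is odd) with cyclic quotient $\langle\sigma\rangle$, so $T/S$ is metacyclic, and one checks directly that each $\varphi_2$ of degree $q-1$ is invariant under the diagonal automorphism (these characters are $\PGL_2(q)$-stable, unlike the degree-$(q+1)/2$ ones), reducing everything to extendibility over the cyclic group $\langle\sigma\rangle$, where no obstruction can occur. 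This gives $\varphi_1(1)=q$ and $\varphi_2(1)=q-1$, distinct, completing the argument.
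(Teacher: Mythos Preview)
The paper does not give its own proof of this statement; it merely cites Corollary~3.8 of \cite{mm07}, preceded by the remark that for $S=\PSL_2(3^f)$ the Steinberg character is the \emph{only} nonprincipal irreducible character extending to $\Aut(S)$. Your reduction via Proposition~3.7 of \cite{mm07} to the case $S=\PSL_2(3^f)$, taking $\varphi_1=\mathrm{St}$ and choosing some $\varphi_2$ of different degree, is exactly the intended shape of the argument.

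There is, however, a genuine gap in your treatment of the extendibility of $\varphi_2$. You claim that since the degree-$(q-1)$ characters are $\PGL_2(q)$-invariant, the problem ``reduces to extendibility over the cyclic group $\langle\sigma\rangle$, where no obstruction can occur''. This inference is not valid: knowing that $\varphi_2$ extends to $\PGL_2(q)$ and that $T/\PGL_2(q)$ is cyclic does \emph{not} by itself force $\varphi_2$ to extend to $T$. One must also check that some extension $\psi\in\Irr(\PGL_2(q))$ of $\varphi_2$ is $T$-invariant, equivalently that the factor set in $H^2(T/S,\CC^\times)$ vanishes. When $f$ is even, $\Aut(S)/S\cong C_2\times C_f$ is not cyclic, and a degree-$(q-1)$ character can have inertia quotient $T/S\cong C_2\times C_{f'}$ with $f'$ even, for which $H^2(T/S,\CC^\times)\cong C_2$ is nontrivial. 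Your appeal to ``metacyclic hence no obstruction'' is exactly where the argument breaks: $C_2\times C_2$ is metacyclic yet has nontrivial Schur multiplier.

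There are two clean ways to repair this. One is to carry out the explicit check with the character table of $\PGL_2(q)$ and the Frobenius action to see that the cocycle is indeed trivial for the degree-$(q-1)$ characters (this is essentially what is done in \cite{mm07}). The other, which avoids any cohomology computation, is to choose $\varphi_2$ so that $T/S$ is forced to be cyclic: take $\varphi_2$ to be one of the two ``exceptional'' characters of degree $(q\pm 1)/2$, which are swapped by the diagonal automorphism. Then $T/S$ meets the diagonal $C_2$ trivially, hence embeds into the cyclic field-automorphism group $C_f$, and extendibility follows from Corollary~11.22 of \cite{isa}. When $f$ is odd, $\Aut(S)/S\cong C_{2f}$ is already cyclic and any choice of $\varphi_2$ works for the same reason.
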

		
		A version of these results in \cite{mm07} for $p'$-degree ($p>3$) irreducible characters has been recently obtained in \cite{grs}.
		
		\begin{thm}[Theorem C of \cite{grs}]
		Let $S$ be a nonabelian simple group and let $p>3$ be a prime divisor of $|S|$.  Assume that $S$ is not one of $A_5$, $A_6$ for $p=5$ or one of $\PSL_2(q)$, $\PSL_3^{\varepsilon}(q)$, $\PSp_4(q)$ or $^2B_2(q)$. Then there exist nonprincipal $\varphi_1,\varphi_2\in\Irr_{p'}(S)$ of different degrees such that $\varphi_1$  and $\varphi_2$ extend to $\Aut(S)$.
		\end{thm}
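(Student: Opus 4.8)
The plan is to run through the classification of finite simple groups and treat alternating groups, sporadic groups (plus the Tits group), and groups of Lie type separately, in each case producing two non-principal characters of distinct $p'$-degrees that extend to $\Aut(S)$. For $S=\Alt(n)$ (with $n\neq 6$), the irreducible characters of $\Alt(n)$ that extend to $\Sym(n)=\Aut(\Alt(n))$ are exactly the restrictions $\chi^{\lambda}|_{\Alt(n)}$ with $\lambda\vdash n$ non-self-conjugate, so it suffices to find two non-self-conjugate partitions $\lambda,\mu$ with $\chi^{\lambda}(1)\neq\chi^{\mu}(1)$ and $p\nmid\chi^{\lambda}(1)\chi^{\mu}(1)$. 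For $p\geq 5$ the hook-length formula gives a comfortable supply of $p'$-degree characters of $\Sym(n)$ (for instance built from partitions with a single part removed, or via $p$-core combinatorics), so this is routine once $n$ is bounded away from the handful of genuinely small cases; those, together with $n=6$ (where $\Aut(\Alt(6))$ is larger and $p=5$ is excluded), are checked by hand. The sporadic groups and the Tits group are dealt with by inspecting \cite{atl}: one reads off two suitably placed $p'$-classes of irreducible characters and their extension behaviour to $S.2$ (or the full outer automorphism group), and since $p>3$ there is always enough room — in particular $M_{22}$, which was a counterexample in other contexts, causes no trouble here.

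For a simple group of Lie type $S=G^{F}/Z(G^{F})$ in non-defining characteristic (defining prime $r\neq p$), the first character is the Steinberg character: $\mathrm{St}(1)=|S|_{r}$ is a power of $r$, hence coprime to $p$, and $\mathrm{St}$ is the unique irreducible character of its degree, is $\Aut(S)$-invariant, and extends to $\Aut(S)$. For the second I would take another unipotent character $\varphi_2\neq 1,\mathrm{St}$: all unipotent characters are $\Aut(S)$-invariant and, by Malle's theorem on the extendibility of unipotent characters, extend to $\Aut(S)$ outside a short explicit list; their degrees are (a power of $r$ times) products of cyclotomic polynomials in $q$, so $p\nmid\varphi_2(1)$ is equivalent to $\Phi_{d}(q)\nmid\varphi_2(1)$, where $d=d_p(q)$ is the order of $q$ modulo $p$. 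In (twisted) rank at least that of the smallest admissible group the generic degrees have enough cyclotomic-factor room to steer around the single polynomial $\Phi_{d}$ — e.g. choosing $\varphi_2$ in a suitable Harish-Chandra or $\Phi_d$-Harish-Chandra series, or a character afforded by a well-chosen parabolic — and this can be organized uniformly along the Lie types; the small-rank families $\PSL_2(q)$, $\PSL_3^{\varepsilon}(q)$, $\PSp_4(q)$, $^2B_2(q)$ are precisely the ones where no such second unipotent character exists for every admissible $p$, which is why they are excluded.

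In defining characteristic ($p=r$) the Steinberg character has $p$-power degree and is useless, so one instead uses that the $p'$-degree irreducible characters of $S$ are the (deflations of the) semisimple characters $\chi_{s}$, with $\chi_{s}(1)=|G^{*F}:C_{G^{*F}}(s)|_{r'}$ automatically prime to $p$. The plan is to pick two semisimple elements $s_1,s_2$ of the dual group with $|C_{G^{*F}}(s_1)|\neq|C_{G^{*F}}(s_2)|$ (to force distinct degrees) whose centralizer types are stable under all field and graph automorphisms, so that the $\Aut(S)$-orbit of each $\chi_{s_i}$ is trivial, and then promote invariance to a genuine extension using Lusztig's Jordan decomposition together with the explicit description of the action of field, graph and diagonal automorphisms on Lusztig series; the hypothesis $p>3$ is used here to keep the diagonal automorphisms (of order dividing $\gcd(2,q-1)$ or $3$) from obstructing the extension of the chosen series. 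Once more the small-rank excluded families are exactly those in which two such $s_i$ cannot be found for all admissible $p$.

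The main obstacle, as usual in this circle of problems, is passing from $\Aut(S)$-\emph{invariance} (which is easy to certify) to an actual \emph{extension}, especially for semisimple characters in defining characteristic, where diagonal automorphisms can genuinely obstruct extendibility and one must control $\Aut(S)_{\chi}$ modulo inner automorphisms. I expect the bulk of the work — and the reason for the precise exception list — to be the uniform, type-by-type verification (over all twisted and untwisted forms) that the two characters written down really do extend and really do have distinct degrees for \emph{every} prime $p>3$ dividing $|S|$; the tightest point will be the cases $d_p(q)\in\{1,2\}$, where many generic degrees become simultaneously divisible by $p$ and the stock of $p'$-degree extendible characters is thinnest.
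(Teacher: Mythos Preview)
The paper does not contain a proof of this statement: it is quoted verbatim as Theorem~C of \cite{grs} (Giannelli--Rizo--Schaeffer Fry) and stated without argument, as one of the many results on extendible characters collected in Section~6 of this survey. There is therefore no ``paper's own proof'' to compare your proposal against.

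That said, your sketch follows the standard architecture for results of this type --- a CFSG case split, with alternating and sporadic groups handled directly, and groups of Lie type treated via the Steinberg character plus a second unipotent character in non-defining characteristic, and via semisimple characters in defining characteristic --- and this is indeed the shape of the argument in \cite{grs}. Your identification of the likely pressure points (promoting invariance to extension for semisimple characters, and the cases $d_p(q)\in\{1,2\}$) is accurate. But as written your proposal is a plan rather than a proof: the claims that ``there is enough room'' among unipotent degrees to avoid $\Phi_d(q)$, and that suitable semisimple classes stable under all outer automorphisms always exist, are exactly the substance of the theorem and require the type-by-type work carried out in \cite{grs}. If you want an actual comparison, you will need to consult that paper directly.
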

		
		As a consequence, we have the following result without exceptions.
		
		\begin{thm}[Theorem B of \cite{grs}]
		Let $S$ be a nonabelian simple group and let $p>3$ be a prime divisor of $|S|$.  Then there exist nonprincipal $\varphi_1,\varphi_2\in\Irr_{p'}(S)$ of different degrees such that $\varphi_1$   extends $\Aut(S)$, $\varphi_2(1)$ does not divide $\varphi_1(1)$ and $\varphi_2$ is $P$-invariant for every $p$-subgroup $P\leq\Aut(S)$.
		\end{thm}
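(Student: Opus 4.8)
The plan is to derive the statement from the preceding theorem (Theorem~C of \cite{grs}), treating separately the groups excluded there. \emph{The generic case.} Suppose first that $S$ is not one of the excluded groups. Then that theorem provides nonprincipal $\varphi_1,\varphi_2\in\Irr_{p'}(S)$ of distinct degrees, both extending to $\Aut(S)$. Since $\varphi_1(1)\neq\varphi_2(1)$, at least one of the two degrees fails to divide the other, so after interchanging labels if necessary we may assume $\varphi_2(1)\nmid\varphi_1(1)$. Now $\varphi_1$ extends to $\Aut(S)$ by construction, while $\varphi_2$, being the restriction of a character of $\Aut(S)$, is fixed by all of $\Aut(S)$ and in particular is $P$-invariant for every $p$-subgroup $P\leq\Aut(S)$. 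This settles the theorem for every $S$ outside the excluded list.

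\emph{A reduction for the excluded groups.} A character $\chi\in\Irr(S)$ is $P$-invariant for every $p$-subgroup $P\leq\Aut(S)$ exactly when $\chi$ is fixed by $L:=O^{p'}(\Aut(S))$, the normal subgroup generated by the $p$-elements of $\Aut(S)$: indeed $\operatorname{Inn}(S)$ always fixes $\chi$, and every $p$-subgroup of $\Aut(S)$ lies in $L$. So if $p\nmid|\operatorname{Out}(S)|$ then $L=\operatorname{Inn}(S)$ and every $\chi\in\Irr(S)$ meets the invariance condition; in that situation it suffices to exhibit a nonprincipal $\varphi_1\in\Irr_{p'}(S)$ extending to $\Aut(S)$ and then take any nonprincipal $\varphi_2\in\Irr_{p'}(S)$ with $\varphi_2(1)\nmid\varphi_1(1)$. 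The excluded groups are $A_5$ and $A_6$ (only for $p=5$) and the families $\PSL_2(q)$, $\PSL_3^{\varepsilon}(q)$, $\PSp_4(q)$, ${}^2B_2(q)$. For $A_5$ and $A_6$ with $p=5$ we have $5\nmid|\operatorname{Out}(S)|$, and the character tables finish the job: take $\varphi_1$ of degree $4$ (resp.\ $9$), which extends to $\Aut(S)$, together with $\varphi_2$ of degree $3$ (resp.\ $8$). For the four Lie-type families write $q=r^{f}$ with $r$ the defining prime; since $p>3$ and $|\operatorname{Out}(S)|$ equals $f$ times a number prime to $p$, the hypothesis $p\mid|\operatorname{Out}(S)|$ forces $p\mid f$, in which case $L=\operatorname{Inn}(S)\langle\psi\rangle$ with $\psi$ the field automorphism whose order is the $p$-part of $f$; thus the invariance condition on $\varphi_2$ reduces to $\varphi_2^{\psi}=\varphi_2$.

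\emph{Construction in the Lie-type families.} When $p\neq r$, the Steinberg character $\mathrm{St}_S$ has $p'$-degree and extends to $\Aut(S)$, hence is $L$-invariant; I would use it as one of the two characters and pair it with a character of the family that extends to $\Aut(S)$, has $p'$-degree, and has degree not a multiple of $\mathrm{St}_S(1)=|S|_{r}$: for $\PSL_2(q)$ a character of degree $q+1$ or $q-1$ (whichever is prime to $p$) coming from a suitable character of the split or non-split torus, and the analogous small-degree unipotent or semisimple characters for $\PSL_3^{\varepsilon}(q)$, $\PSp_4(q)$, ${}^2B_2(q)$; the divisibility condition is then automatic because $\mathrm{St}_S(1)$ is the full $r$-part of $|S|$. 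When $p=r$, so that $q$ is a power of $p$ and $p\mid\mathrm{St}_S(1)$, one works in the character table directly, using the explicit list of $p'$-degree irreducible characters of $S$ together with their stabilizers in $\Aut(S)$ --- and, when $p\mid f$, the fact that the fixed-point subgroup $C_S(\psi)$ is a group of the same type over a subfield, which controls which characters are $\psi$-invariant --- and one selects two of distinct, non-dividing degrees, one of them moreover extending to $\Aut(S)$.

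\emph{The main obstacle.} The generic reduction and the case $p\nmid|\operatorname{Out}(S)|$ are routine. The real work lies in producing $\varphi_1$: a $p'$-degree character that genuinely extends to the \emph{full} automorphism group of each of the four low-rank families, which forces one to verify both invariance under all field automorphisms and the vanishing of the obstruction in $H^{2}(\operatorname{Out}(S),\CC^{\times})$, and, when $p\mid f$, to exhibit simultaneously a $\psi$-invariant $p'$-degree companion $\varphi_2$ of the correct degree. This is precisely where the explicit parametrization of $\Irr(S)$ and of the action of $\Aut(S)$ on it, family by family (and the direct check of the smallest cases $A_5$, $A_6$ and small $q$ against their tables), cannot be sidestepped.
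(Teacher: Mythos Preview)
The paper does not contain a proof of this statement: it is quoted verbatim as Theorem~B of \cite{grs} in the survey Section~6, alongside many other results on extendible characters of simple groups, all cited without proof. There is therefore no ``paper's own proof'' to compare your proposal against.

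As for the proposal itself: your generic reduction from Theorem~C of \cite{grs} is correct. If both $\varphi_1,\varphi_2$ extend to $\Aut(S)$ then both are $\Aut(S)$-invariant (hence $P$-invariant for every $p$-subgroup $P$), and since $\varphi_1(1)\neq\varphi_2(1)$ one of the two non-divisibility conditions must hold, so a swap settles it. Your observation that $P$-invariance for all $p$-subgroups is equivalent to $O^{p'}(\Aut(S))$-invariance, and that this collapses to $\operatorname{Inn}(S)$-invariance when $p\nmid|\operatorname{Out}(S)|$, is also correct and is the natural way to organise the excluded cases.

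Where your proposal falls short is exactly where you say it does: for the four low-rank Lie-type families with $p\mid f$ (and for $p=r$) you have only indicated what one \emph{would} do --- pair the Steinberg character with a suitable small-degree character, or ``work in the character table directly'' --- without actually exhibiting the required $\varphi_1$ and $\varphi_2$ and verifying extension and invariance. That family-by-family analysis is the substance of the argument in \cite{grs}; what you have written is a correct outline of how such a proof is structured, but not a proof. In particular, producing a $p'$-degree character of each excluded $S$ that extends to the full $\Aut(S)$ (not merely to its inertia group), while simultaneously finding a $\psi$-invariant companion of non-dividing degree, requires the explicit parametrisation of $\Irr(S)$ and of the $\Aut(S)$-action that you yourself flag as unavoidable.
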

		
		For other related results, involving blocks, see Theorem C of \cite{rsv} and Proposition 2.1 of \cite{grss}. 
		The following result, essentially due to G. Navarro and P. H. Tiep \cite{nt}, appears as Lemma 2.2 of \cite{hun} and takes into account fields of values. If $N\trianglelefteq G$ and $\varphi\in\Irr(N)$ we write $I_G(\varphi)$ to denote the inertia subgroup of $\varphi$ in $G$. 
		
		\begin{thm}[Navarro-Tiep]
		Let $S$ be a nonabelian simple group and let $p$ be a prime divisor of $|S|$. Then there exists $\varphi\in\Irr_{p'}(S)$ such that $I_{\Aut(S)}(\varphi)$ has $p'$-index in $\Aut(S)$ and $\varphi$ extends to a $\QQ_p$-valued irreducible character of $I_{\Aut(S)}(\varphi)$.
		\end{thm}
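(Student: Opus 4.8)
The plan is to decouple two tasks — producing, on $S$ itself, a nonprincipal character $\varphi\in\Irr_{p'}(S)$ with small enough field of values that is stable under a Sylow $p$-subgroup of $\Aut(S)$, and then extending $\varphi$ to its inertia group — and to run through the families of finite simple groups. (Note that the trivial character formally satisfies the conclusion, so the substance of the statement, and what is used in the applications, is to obtain a \emph{nonprincipal} such $\varphi$.) For the reformulation: $I_{\Aut(S)}(\varphi)$ has $p'$-index in $\Aut(S)$ exactly when $\varphi$ is fixed by some $P\in\Syl_p(\Aut(S))$, equivalently when the stabilizer of $\varphi$ in $\operatorname{Out}(S)$ contains a Sylow $p$-subgroup of $\operatorname{Out}(S)$. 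For the extension: if $\varphi$ is $\QQ_p$-valued then $\det\varphi=1_S$ because $S$ is perfect, so the canonical-extension machinery (a character invariant over a cyclic quotient extends, together with the determinantal-order argument; see Chapter 6 of \cite{isa}) produces, when $I/S$ is cyclic, a unique extension $\hat\varphi$ to $I:=I_{\Aut(S)}(\varphi)$ of determinantal order prime to $|I:S|$; any Galois automorphism fixing $\QQ_p$ carries $\hat\varphi$ to an extension of $\varphi$ with the same determinantal order, so $\hat\varphi$ is $\QQ_p$-valued by uniqueness. When $I/S$ is not cyclic — which happens only for $A_6$ and for a short list of groups of Lie type such as $D_4(q)$, $\PSp_4(2^f)$, ${}^2B_2(2^f)$ and $F_4(2^f)$ — one exhibits the extension by hand.

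With these reductions in place, the majority of the simple groups are straightforward. If $S$ is of Lie type in a defining characteristic $r\neq p$, take $\varphi=\mathrm{St}$, the Steinberg character: $\mathrm{St}(1)=|S|_r$ is prime to $p$, $\mathrm{St}$ is integer-valued and $\Aut(S)$-invariant, and it is well known to extend to $\Aut(S)$. If $S=A_n$ with $n\geq 5$ and $n\neq 6$, then $\Aut(A_n)=\Sym(n)$, and I would choose a non-self-conjugate partition $\lambda\vdash n$ with $p\nmid\chi^{\lambda}(1)$ — the hook-length formula guarantees a nonprincipal $p'$-degree irreducible character of $\Sym(n)$, and non-self-conjugacy can be arranged for all $n$ outside a short list of small values, checked directly — so that $\varphi:=\chi^{\lambda}|_{A_n}$ is irreducible, rational-valued, $\Sym(n)$-invariant, and extends to $\Sym(n)$ via $\chi^{\lambda}$. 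Finally, $A_6$ and the sporadic groups (where $|\operatorname{Out}(S)|\le 2$) are finished by inspecting \cite{atl} for a suitable nonprincipal rational character of $p'$-degree.

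The remaining case, $S=\mathbf{G}^F$ of Lie type in defining characteristic $p$, is where the real work lies, since now $\mathrm{St}$ has $p$-power degree. Here I would invoke Lusztig's Jordan decomposition: up to a short controlled list of exceptions, the nonprincipal members of $\Irr_{p'}(S)$ are the semisimple characters $\chi_s$ attached to nontrivial semisimple $s\in(\mathbf{G}^{*})^F$, and $\chi_s(1)=|\mathbf{G}^F:\bC_{\mathbf{G}^*}(s)^F|_{p'}$ is prime to $p$. Because the field and graph automorphisms permute the $\chi_s$ compatibly with their action on the semisimple classes, and a Galois automorphism acts on $\chi_s$ through its action on $s$ (up to a linear twist), it suffices to find a semisimple $s\neq 1$ whose $(\mathbf{G}^{*})^F$-class is of small order, real, and fixed both by a Sylow $p$-subgroup of $\operatorname{Out}(S)$ and by the Galois stabilizer of $\QQ_p$ — for instance an involution when $p$ is odd. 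Then $\chi_s$ is nonprincipal, $\QQ_p$-valued, has a $p'$-size $\Aut(S)$-orbit, and the extension step of the first paragraph applies.

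The hard part is precisely this last case: establishing, uniformly over all groups of Lie type in characteristic $p$, that a semisimple element with all of the required rationality and equivariance properties exists. This rests on the detailed description of how diagonal, field, graph and Galois automorphisms act on Deligne--Lusztig and semisimple characters — the technical core of the Navarro--Tiep analysis — and the groups with very few rational semisimple classes, such as ${}^2B_2(q)$, $\PSL_2(q)$, $\PSL_3^{\varepsilon}(q)$ and $\PSp_4(q)$, must be treated individually (which is also why these appear as exceptions in the related results quoted above). By contrast, the cross-characteristic, alternating and sporadic cases are routine once the appropriate character has been named.
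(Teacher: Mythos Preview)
The paper does not prove this theorem. It merely records it as a known result, writing ``The following result, essentially due to G. Navarro and P. H. Tiep \cite{nt}, appears as Lemma 2.2 of \cite{hun}'' and stating it without any argument. There is therefore no proof in the paper to compare your proposal against.

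That said, your sketch is a reasonable outline of the Navarro--Tiep strategy: Steinberg in cross-characteristic, partition characters for $A_n$, the Atlas for sporadics, and semisimple characters in defining characteristic, with the canonical-extension/uniqueness trick to propagate $\QQ_p$-rationality. One point to be careful about: your reduction of the extension step to ``$I/S$ cyclic plus a short list of exceptions'' is too optimistic. For the Steinberg character one has $I=\Aut(S)$, and $\operatorname{Out}(S)$ is frequently noncyclic (e.g.\ $\PSL_n(q)$ with $n\geq 3$); you correctly appeal instead to the known fact that $\mathrm{St}$ extends to $\Aut(S)$, but in the defining-characteristic case your chosen $\varphi=\chi_s$ may also have noncyclic $I/S$, and there the extension (and its $\QQ_p$-rationality) requires the specific arguments in \cite{nt} rather than the determinantal-order trick alone. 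Your closing paragraph acknowledges this, but the earlier paragraph undersells how much of the work lives here. In short: the proposal is a fair roadmap, but since the paper itself offers no proof, what you have written is a summary of the literature rather than a comparison.
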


		For characters of degree divisible by $p$, we have the following result.
		
		\begin{thm}[Theorem 3.1 of \cite{ht}]
		Let $S$ be a nonabelian simple group and let $p$ be a prime divisor of $|S|$.  Then there exists a nonprincipal $\varphi\in\Irr(S)$ of degree divisible by $p$ such that $\varphi$ extends to $I_{\Aut(S)}(\varphi)$.
		\end{thm}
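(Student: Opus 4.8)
\emph{A plan for Theorem 3.1 of \cite{ht}.}
The plan is to cut the problem down to a few cases by exploiting that extendibility over a cyclic quotient is automatic. Recall from Corollary 11.22 of \cite{isa} that if $N\trianglelefteq H$ with $H/N$ cyclic, then every $H$-invariant $\theta\in\Irr(N)$ extends to $H$. Applying this with $N=S$ and $H=I_{\Aut(S)}(\varphi)$, any nonprincipal $\varphi\in\Irr(S)$ with $p\mid\varphi(1)$ will do, provided $I_{\Aut(S)}(\varphi)/S$ is cyclic; and such a $\varphi$ certainly exists by the It\^o--Michler theorem, since a nonabelian simple group has no normal Sylow $p$-subgroup. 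This disposes at once of every $S$ with $\operatorname{Out}(S)$ cyclic: all sporadic groups and the Tits group, all alternating groups $A_n$ with $n\neq 6$, and the many groups of Lie type whose outer automorphism group is cyclic. We are thus left with $A_6$ and the groups of Lie type with non-cyclic $\operatorname{Out}(S)$.

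For a group of Lie type $S=G(q)$ in defining characteristic $r$, the case $p=r$ is handled by the Steinberg character: it has degree $|S|_r=r^{N}$ (with $N$ the number of positive roots), it is canonically defined and hence $\Aut(S)$-invariant, and it is classical that it extends to $\Aut(S)$. For $A_6\cong\PSL_2(9)$ this already covers $p=3$, and $p=2,5$ are settled by direct inspection of the character table of $\PSL_2(9)$ inside $\mathrm{P\Gamma L}_2(9)$ (a character of degree $8$ for $p=2$ and one of degree $5$ or $10$ for $p=5$, after checking inertia subgroups); the remaining exceptional isomorphisms among small classical groups are dealt with similarly. Now suppose $S=G(q)$ with $p\neq r$ and put $d=\operatorname{ord}_p(q)$. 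Here one takes an irreducible character $\varphi$ with $p\mid\varphi(1)$ — described, via Jordan decomposition, by a semisimple element $s$ of the dual group together with a unipotent character of $C(s)^F$ — and controls its inertia subgroup in $\Aut(S)$ through the known action of $\Aut(S)$ on Lusztig series. The most favourable choices are a \emph{unipotent} character of degree divisible by $p$, which one can detect from a $\Phi_d$-factor in the generic degrees of Lusztig's classification and which, being invariant under all diagonal automorphisms, extends to its inertia subgroup by Malle's results on extendibility of unipotent characters; and, when the unipotent degrees all avoid $p$ (for instance $\PSL_3(q)$ with $p\mid q^2+q+1$, where the unipotent degrees are $1,\ q(q+1),\ q^3$), a \emph{semisimple} character $\chi_s$ with $C(s)$ of order prime to $p$, whose degree $|S|_{r'}/|C(s)^F|$ then carries the full $p$-part of $|S|$.

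The step I expect to be the main obstacle is this last part: the cross-characteristic groups of Lie type with non-cyclic $\operatorname{Out}(S)$ — those carrying a diagram automorphism of order $\geq 2$ alongside diagonal and field automorphisms, such as $\PSL_2(p^{2e})$ with $p$ odd, $\PSL^{\pm}_n(q)$ with $n\geq 3$, the even-dimensional orthogonal groups, and $E_6(q)$. There one must (i) use Lusztig's parametrization and the generic-degree formulas to pin down a character carrying a $\Phi_d$-factor and then ensure that the prime $p$ itself, not merely the polynomial $\Phi_d(q)$, divides the numerical degree — which isolates a short list of small $q$ to be treated by hand; and (ii) compute how the full automorphism group acts on the chosen character, which for semisimple characters means confronting disconnected centralizers in the dual group and, for unipotent characters, the well-documented subtleties of triality on $D_4$ and of the graph automorphism on $E_6$. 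Determining $I_{\Aut(S)}(\varphi)$ precisely and then verifying extendibility in these cases is where essentially all of the technical work lies.
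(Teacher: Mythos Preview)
The paper does not prove this statement. Section~6 is a catalogue of extendibility results on simple groups, each quoted from the literature with a citation and no argument; this particular theorem is simply attributed to Hung--Tiep as Theorem~3.1 of \cite{ht}. There is therefore nothing in the present paper to compare your proposal against.

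That said, your plan is a coherent outline of how such a theorem is typically attacked: dispose of cyclic $\operatorname{Out}(S)$ via \cite[Corollary~11.22]{isa} and It\^o--Michler, use the Steinberg character in defining characteristic, and in cross characteristic look first for a unipotent character with a $\Phi_d$-factor (invoking Malle's extendibility theorem, which the present paper records as Theorem~2.4 of \cite{mal08}) and otherwise a suitable semisimple character. Your identification of the real work --- controlling the $\Aut(S)$-action on the chosen character when $\operatorname{Out}(S)$ is non-cyclic and handling the small-$q$ residue --- is accurate. If you want to check your sketch against an actual proof, you must consult \cite{ht} directly; the expository paper under review offers none.
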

		
		\begin{thm}[Theorem 3.2 of \cite{ht}]
		Let $p$ be a prime and let $S$ be a nonabelian simple $p'$-group.  Then there exists a nonprincipal $\varphi\in\Irr(S)$  such that $\varphi$ extends to $I_{\Aut(S)}(\varphi)$ and $p$ does not divide $|I_{\Aut(S)}(\varphi)|$.
		\end{thm}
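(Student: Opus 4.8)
The plan is to dichotomize according to whether $p$ divides $|\operatorname{Out}(S)|$. If it does not, then since $p\nmid|S|$ we get $p\nmid|\Aut(S)|=|S|\cdot|\operatorname{Out}(S)|$, so Theorem~\ref{extsim} already finishes the job: any nonprincipal $\varphi\in\Irr(S)$ extending to $\Aut(S)$ has $I_{\Aut(S)}(\varphi)=\Aut(S)$, of $p'$-order. So assume $p\mid|\operatorname{Out}(S)|$. Since every nonabelian simple group has even order and $p\nmid|S|$, $p$ is odd; and $S$ is neither alternating nor sporadic, for in those cases $|\operatorname{Out}(S)|$ is a power of $2$, which would force $p=2$. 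Hence $S$ is a simple group of Lie type over $\FF_q$, $q=\ell^f$, with $\ell\neq p$ (else $p\mid|S|$).

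Next I would use the structure $\operatorname{Out}(S)=D\cdot\mathcal{F}\cdot\mathcal{G}$, the product of the diagonal, field and graph automorphism groups. A short case check shows that $p\mid|D|$ forces $p\mid|S|$, while $|\mathcal{G}|$ divides $6$ with $3\mid|\mathcal{G}|$ only for type $D_4$, where $3\mid|S|$ anyway; hence a Sylow $p$-subgroup $P$ of $\operatorname{Out}(S)$ is cyclic, equal to a Sylow $p$-subgroup of the cyclic field-automorphism group $\mathcal{F}$ of order $f$, so $p\mid f$. Fix a field automorphism $\sigma$ of $S$ of order $p$; then $\langle\sigma\rangle$ is the unique subgroup of order $p$ in $P$, and by a Schur--Zassenhaus argument every subgroup of order $p$ of $\Aut(S)$ is $\Aut(S)$-conjugate to $\langle\sigma\rangle$. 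Since $\operatorname{Inn}(S)\cong S$ is a normal $p'$-subgroup of $\Aut(S)$ fixing every irreducible character, for $\varphi\in\Irr(S)$ we have $p\mid|I_{\Aut(S)}(\varphi)|$ exactly when some $\Aut(S)$-conjugate of $\sigma$ fixes $\varphi$. Thus it is enough to produce a nonprincipal $\varphi\in\Irr(S)$ whose stabilizer in $\operatorname{Out}(S)$ is cyclic and of order prime to $p$: cyclicity yields that $\varphi$ extends to $I_{\Aut(S)}(\varphi)$ by extension over a cyclic quotient (Corollary 11.22 of \cite{isa}), and the $p'$-condition is precisely what is wanted.

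To build such a $\varphi$ I would use Lusztig's semisimple characters. Realizing $\sigma$ as a Steinberg endomorphism $F_0$ with $F=F_0^{p}$ and passing to the dual group, $\sigma$ permutes the rational Lusztig series, sending $\mathcal{E}(G^F,s)$ to $\mathcal{E}(G^F,\sigma(s))$. I would pick a regular semisimple element $1\neq s$ of the adjoint dual group --- automatically of $p'$-order, as $|G^{*F}|$ is prime to $p$ --- lying in a suitably generic maximal torus, arranged so that (i) the stabilizer in $\operatorname{Out}(S)$ of the $G^{*F}$-class of $s$ is cyclic, and (ii) the whole $\operatorname{Out}(S)$-orbit of that class avoids the classes defined over the subfield $\FF_{q_0}$ (where $q=q_0^{p}$), equivalently is moved by every conjugate of $\sigma$. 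Both can be arranged by counting: the $F_0$-stable semisimple classes number only $O(q_0^{\,\mathrm{rank}})$, whereas there are $\Theta(q^{\,\mathrm{rank}})$ semisimple classes altogether and $|\operatorname{Out}(S)|$ is at most polynomial in $\log|S|$, so for $q$ large enough there is room for $s$ to satisfy (i) and (ii) simultaneously. The associated semisimple character of the simply connected group restricts irreducibly to $S$ (no obstruction survives since $p\nmid|D|$), it is nonprincipal because $s$ is noncentral, and by the compatibility of Lusztig's Jordan decomposition with $\Aut(S)$ its $\operatorname{Out}(S)$-stabilizer coincides with the stabilizer of the class of $s$, which (i)--(ii) made cyclic and prime to $p$.

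This leaves a finite list of small groups where the counting degenerates --- $\PSL_2(q)$, the Suzuki and Ree groups ${}^2B_2(q)$, ${}^2G_2(q)$, ${}^2F_4(q)$ and $G_2(q)$ for small $q$, and a handful of further groups with exceptional isomorphisms or automorphism groups --- and there I would read the conclusion off the (generic) character tables: $\operatorname{Out}(S)$ is then typically cyclic, so extendibility to the inertia subgroup is automatic, and one exhibits a nonprincipal character in a $\langle\sigma\rangle$-free orbit by comparing the number of $\sigma$-fixed classes (those of the subfield subgroup) with the total. I expect the main obstacle to be the third step: determining the full $\operatorname{Out}(S)$-stabilizer of the chosen semisimple character uniformly across all Lie types --- in particular verifying that diagonal and graph automorphisms neither enlarge it beyond a cyclic group nor spoil irreducible restriction to $S$ --- together with the somewhat laborious check of the exceptional small cases.
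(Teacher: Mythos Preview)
The paper does not give a proof of this statement: it is simply quoted as Theorem~3.2 of \cite{ht} in the survey Section~6, which collects (without proof) known extendibility results for characters of simple groups. There is therefore no proof in the paper to compare your proposal against.

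As for your proposal on its own merits: the reductions in your first two paragraphs are correct and standard --- the dichotomy on whether $p\mid|\operatorname{Out}(S)|$, the elimination of alternating and sporadic groups, the conclusion that a Sylow $p$-subgroup of $\operatorname{Out}(S)$ is cyclic and generated by a field automorphism, and the observation that a cyclic inertial quotient forces extendibility by Corollary~11.22 of \cite{isa} are all sound. The strategy of the third paragraph --- producing a suitable semisimple character via a counting argument on semisimple classes in the dual group, and invoking the equivariance of the Jordan decomposition under $\operatorname{Aut}(S)$ --- is also the natural line of attack and is close in spirit to what is actually done in \cite{ht}. That said, what you have written there is a plan rather than a proof: the passage from a character of the simply connected group to an irreducible character of $S$, the precise identification of the $\operatorname{Out}(S)$-stabilizer of a semisimple character with that of the dual semisimple class (this uses nontrivial results on the equivariance of Lusztig induction), and the treatment of the residual ``small $q$'' cases all require substantial work that you have only gestured at. You correctly identify these as the obstacles; filling them in would essentially reproduce the argument of \cite{ht}.
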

		
		

In the next result, we do not require any condition on the degree, but we get a rational extension.

\begin{thm}[Lemma 4.1 of \cite{hstv}]
\label{1}
   Let $S$ be a nonabelian simple group. Then there exists a nonprincipal $\varphi\in\Irr(S)$ that has an irreducible rational extension to $\Aut(S)$.   
 \end{thm}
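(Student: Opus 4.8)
The plan is to argue through the classification of finite simple groups, exhibiting a suitable $\varphi$ in each family. For $S$ of Lie type in defining characteristic $p$, I would take $\varphi=\operatorname{St}_S$, the Steinberg character. It is nonprincipal, since $\operatorname{St}_S(1)=|S|_p>1$, and it is rational-valued: by Curtis's character formula every value of $\operatorname{St}_S$ is, up to sign, a power of $p$ (and $\operatorname{St}_S$ vanishes on elements of nontrivial $p$-part). It is moreover $\Aut(S)$-invariant, because every automorphism of $S$ permutes the parabolic subgroups lying above a fixed Borel subgroup and $\operatorname{St}_S$ is the associated alternating sum $\sum_{J}(-1)^{|J|}1_{P_J}^{S}$. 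The point that needs real work is to produce a \emph{rational} extension to $\Aut(S)$, not merely an extension: invariance alone yields an extension only up to a linear character of $\Aut(S)/S$, and such a linear character need not be rational, so an arbitrarily chosen extension may take irrational values. To get around this I would invoke the canonical ``algebraic'' extension, namely the Steinberg character of the (possibly disconnected) reductive group obtained from the adjoint group by adjoining the field and graph automorphisms; it still satisfies Curtis's formula, hence is rational-valued, it restricts to $\operatorname{St}_S$ on $S$, and it descends to a rational irreducible character of $\Aut(S)$. This is consistent with the remark above that for $S\cong\PSL_2(3^f)$ the Steinberg character is the only nonprincipal character extending to $\Aut(S)$.

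For $S=\Alt(n)$ with $n\ge 5$ and $n\ne 6$, I would use that $\Aut(\Alt(n))=\Sym(n)$ and that all irreducible characters of $\Sym(n)$ are rational. Taking $\chi\in\Irr(\Sym(n))$ labelled by the partition $(n-1,1)$ — which is not self-conjugate for $n\ge 4$ — the restriction $\chi_{\Alt(n)}$ is irreducible and nonprincipal, and $\chi$ itself is a rational extension of it to $\Aut(S)$. I would treat $\Alt(6)$ separately by a direct check in the Atlas: the degree-$9$ irreducible character of $\Alt(6)\cong\PSL_2(9)$, that is, its Steinberg character, is invariant under $\Aut(\Alt(6))=\Alt(6).2^2$ and admits a rational extension to it (note that the degree-$5$ characters, which do come from $\Sym(6)$, are not $\Aut(\Alt(6))$-invariant, so the generic argument does not apply here).

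For the $26$ sporadic simple groups and the Tits group ${}^2F_4(2)'$, one has $|\Aut(S):S|\in\{1,2\}$, and the statement is checked case by case from the Atlas: when $\Aut(S)=S$ it is enough to name a nonprincipal rational irreducible character of $S$, and when $|\Aut(S):S|=2$ one exhibits a rational $\Aut(S)$-invariant $\varphi\in\Irr(S)$ whose extension to $\Aut(S)$ is again rational (for ${}^2F_4(2)'$ one may once more use the restriction of the Steinberg character of ${}^2F_4(2)$). The only genuinely delicate step is the Lie-type case: one must strengthen ``$\operatorname{St}_S$ extends to $\Aut(S)$'' to ``$\operatorname{St}_S$ extends \emph{rationally} to $\Aut(S)$'', and this appears to require the algebraic-group construction of the extension rather than abstract Clifford theory.
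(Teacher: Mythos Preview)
The paper does not prove this theorem; it merely records it as Lemma~4.1 of \cite{hstv} among the catalogue of extendibility results in Section~6. So there is no proof in the present paper against which to compare your attempt.

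That said, your outline is the standard route and is essentially the one taken in \cite{hstv}: the Steinberg character for groups of Lie type, a non-self-conjugate partition for $\Alt(n)$ with $n\neq 6$, and Atlas checks for $\Alt(6)$, the sporadic groups, and the Tits group. You have correctly isolated the only genuinely nontrivial point, namely that invariance of $\operatorname{St}_S$ under $\Aut(S)$ does not by itself yield a \emph{rational} extension, since two extensions differ by a linear character of $\Aut(S)/S$ that need not be rational. Your proposed fix via ``the Steinberg character of the disconnected reductive group'' is morally correct but, as written, is not yet a proof: the assertion that ``it still satisfies Curtis's formula, hence is rational-valued'' needs justification, because Curtis's alternating-sum formula is proved for connected reductive groups and does not automatically transfer to the extension by field and graph automorphisms. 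The clean way to close this gap is to cite the literature that does exactly this work: Schmid (\emph{Extending the Steinberg representation}, J.~Algebra \textbf{150} (1992), 254--256) and Feit (\emph{Extending Steinberg characters}, Contemp.\ Math.\ \textbf{153} (1993)) prove that $\operatorname{St}_S$ admits a rational-valued extension to $\Aut(S)$, and this is what \cite{hstv} invokes. With that reference in hand your argument is complete.
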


Sometimes, we need many characters that extend. For alternating groups, we have the following. 

\begin{thm}[Lemma 2.4 of \cite{mproc}]
\label{2}
The number of irreducible characters of $A_m$ that extend to $\Sym(m)$ goes to infinity when $m\to\infty$.
\end{thm}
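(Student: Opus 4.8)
The plan is to count irreducible characters of $A_m$ that extend to $S_m$ by using the well-known restriction pattern from $S_m$ to $A_m$. Recall that the irreducible characters of $S_m$ are indexed by partitions $\lambda \vdash m$, and the restriction $\chi^\lambda|_{A_m}$ is irreducible precisely when $\lambda \neq \lambda'$ (the partition is not self-conjugate), while if $\lambda = \lambda'$ the restriction splits into two irreducible constituents of equal degree. When $\chi^\lambda|_{A_m}$ is irreducible, that character of $A_m$ manifestly extends to $S_m$ (namely by $\chi^\lambda$ itself). Moreover $\chi^\lambda|_{A_m} = \chi^{\lambda'}|_{A_m}$, so the number of irreducible characters of $A_m$ arising this way is exactly $\tfrac12(p(m) - q(m))$, where $p(m)$ is the number of partitions of $m$ and $q(m)$ is the number of self-conjugate partitions of $m$.

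**Next I would** invoke the classical asymptotics: $p(m) \sim \frac{1}{4m\sqrt{3}} e^{\pi\sqrt{2m/3}}$ by Hardy–Ramanujan, while the number of self-conjugate partitions $q(m)$ equals the number of partitions of $m$ into distinct odd parts, which grows only like $\exp(\pi\sqrt{m/6})$ — a much smaller exponential. Hence $p(m) - q(m) \to \infty$, and in fact $\tfrac12(p(m)-q(m)) \to \infty$. This already shows that the number of irreducible characters of $A_m$ that extend to $S_m$ tends to infinity. (For a self-contained argument one does not even need the precise asymptotics: it suffices to note $p(m) \to \infty$ and that the number of self-conjugate partitions is, say, at most $p(\lfloor m/2\rfloor)$ via the Durfee-square type bijection, which is $o(p(m))$; even the crude bound that self-conjugate partitions of $m$ are far fewer than all partitions of $m$ for large $m$ suffices.)

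**The one subtlety to address** is whether to count a character of $A_m$ once or with multiplicity, and whether we should also harvest extendible characters from the self-conjugate partitions. For self-conjugate $\lambda$, the two constituents of $\chi^\lambda|_{A_m}$ are swapped by $S_m$ and individually do \emph{not} extend to $S_m$, so those contribute nothing; this is why only the non-self-conjugate partitions count. Distinct pairs $\{\lambda,\lambda'\}$ with $\lambda\neq\lambda'$ give \emph{distinct} irreducible characters of $A_m$ (their common degree is $\chi^\lambda(1)$, and more refined invariants distinguish different pairs since restriction from $S_m$ to $A_m$ is at most two-to-one), so the count $\tfrac12(p(m)-q(m))$ is a genuine lower bound for the number of distinct extendible irreducible characters.

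**I expect the main obstacle to be** purely bookkeeping: being careful about the two-to-one nature of restriction and confirming that the surviving characters are genuinely distinct and genuinely extend (as opposed to merely being $A_m$-invariant), together with citing a sufficiently clean bound on $q(m)$ versus $p(m)$. None of this is deep — it is all standard symmetric-group representation theory plus elementary partition asymptotics — so the proof is short, which matches its appearance as a lemma.
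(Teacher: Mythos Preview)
Your argument is correct: the irreducible characters of $A_m$ that extend to $\Sym(m)$ are precisely the $\Sym(m)$-invariant ones, and by the standard branching rules these are exactly the restrictions $\chi^\lambda\!\downarrow_{A_m}$ for $\lambda\neq\lambda'$, counted once per pair $\{\lambda,\lambda'\}$. The count $\tfrac12\bigl(p(m)-q(m)\bigr)\to\infty$ follows from any of the elementary bounds you mention (indeed, simply exhibiting linearly many two-row partitions that are not self-conjugate already suffices).

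Note, however, that the present paper does not give its own proof of this statement: it is recorded here only as a citation of Lemma~2.4 of \cite{mproc}. So there is no ``paper's proof'' to compare against in this survey. Your approach is the natural one and is almost certainly what underlies the cited lemma as well; the only cosmetic point is that your aside bounding $q(m)$ by $p(\lfloor m/2\rfloor)$ via a Durfee-square argument would need a polynomial correction factor if stated precisely, but this is irrelevant since the crude asymptotic comparison (or the explicit-family argument) is already more than enough.
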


		For simple groups of Lie type, we have the following important result.
		
		\begin{thm}[Theorem 2.4 of \cite{mal08}]
		Let $S$ be a simple group of Lie type. Then all unipotent characters of $S$ extend to their inertia group in $\Aut(S)$.
		\end{thm}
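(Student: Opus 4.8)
The plan is to combine the classification of $\Aut(S)$ with Lusztig's classification of the unipotent characters and the known description of the action of $\Aut(S)$ on them. Write $S=G^F/Z(G^F)$ with $G$ a simple simply connected algebraic group over $\overline{\FF}_p$ and $F$ a Steinberg endomorphism (a handful of small groups, where $S$ is a proper section of $G^F$, can be dealt with directly). Every unipotent character $\rho$ of $G^F$ lies in the Lusztig series $\mathcal E(G^F,1)$, has $Z(G^F)$ in its kernel, and therefore defines a character of $S$, and these are exactly the unipotent characters of $S$. Since $\Aut(S)$ is generated by inner, diagonal, field and graph automorphisms, I would treat these in turn. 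For the diagonal part: by Lusztig's results every unipotent character of $G^F$ is fixed by all inner and diagonal automorphisms, and if $G\hookrightarrow\tilde G$ is a regular embedding (so that $\tilde G^F$ realises precisely the inner--diagonal automorphisms of $S$) then restriction gives a bijection $\mathcal E(\tilde G^F,1)\to\mathcal E(G^F,1)$; hence the unipotent character $\tilde\rho$ of $\tilde G^F$ above $\rho$ is unique, extends $\rho$, and is canonically attached to it. So $I:=I_{\Aut(S)}(\rho)$ contains the inner--diagonal subgroup, over which $\rho$ already extends to $\tilde\rho$, and it remains to extend $\tilde\rho$ over $I/(\text{inner--diagonal})=:\Phi_\rho$, a subgroup of the group $\Phi$ of field and graph automorphisms; note $\Phi_\rho=\Phi_{\tilde\rho}$.

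For every type except untwisted $A_n$ $(n\geq2)$, $D_n$ $(n\geq4)$ and $E_6$ the group $\Phi$ is cyclic --- for $B_2,G_2,F_4$ in the defining characteristic the exceptional graph automorphism squares to a field automorphism, and the other twisted groups absorb the diagram graph automorphism into the Frobenius --- and then, since $H^2(C,\CC^\times)=1$ for $C$ finite cyclic, the $\Phi_\rho$-invariant character $\tilde\rho$ automatically extends over $\tilde G^F\rtimes\Phi_\rho$ (see, e.g., Theorem~11.22 of \cite{isa}); this settles all such types. For the remaining types $\Phi=\langle\phi\rangle\times\Gamma$, where $\phi$ is the standard field automorphism of order $f$ $(q=p^f)$ and $\Gamma$ is the diagram graph automorphism group, $\Gamma\cong C_2$ except $\Gamma\cong\Sym(3)$ for $D_4$. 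By the detection of extendibility on Sylow subgroups (Theorem~11.31 of \cite{isa}) it suffices to extend $\tilde\rho$ over $\tilde G^F.Q$ with $Q/\tilde G^F$ a Sylow $\ell$-subgroup of $\Phi_\rho$; for odd $\ell$ (and, outside $D_4$, also for $\ell=2$) this subgroup is cyclic and we are done. The genuinely new cases are thus $\ell=2$ with $Q/\tilde G^F\cong C_{2^a}\times C_2$ (the $2$-part of $\langle\phi\rangle$ times an order-$2$ graph automorphism $\gamma$ fixing $\rho$), and, only for $D_4$, also $\ell=3$ with $Q/\tilde G^F\cong C_{3^b}\times C_3$ (the $3$-part of $\langle\phi\rangle$ times a triality $\tau$ fixing $\rho$).

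For these cases I would first extend $\tilde\rho$ over $\tilde G^F\langle\delta\rangle$, where $\delta=\gamma$ or $\tau$. The plan here is to pass to the (disconnected) reductive group $\tilde G\rtimes\langle\delta\rangle$ with its Steinberg endomorphism $F$ and invoke the Deligne--Lusztig theory of disconnected groups of Digne--Michel: the ``unipotent part'' of the coset $\tilde G^F\delta\subseteq(\tilde G\rtimes\langle\delta\rangle)^F$ is parametrised by essentially the same combinatorics as the unipotent characters of $\tilde G^F$, and each $\delta$-fixed unipotent character of $\tilde G^F$ is the restriction of an irreducible character of $\tilde G^F\langle\delta\rangle$, hence extends; one may moreover take this extension with values in a fixed cyclotomic field, since disconnected Deligne--Lusztig characters have controlled values. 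Because $\phi$ commutes with $\delta$ and acts on unipotent characters --- and on these disconnected Deligne--Lusztig characters --- through Lusztig's explicit combinatorial rule, the chosen extension over $\tilde G^F\langle\delta\rangle$ is $\phi$-invariant; as $\tilde G^F\langle\delta\rangle$ is normal in $\tilde G^F.Q$ with cyclic quotient, it extends once more, giving an extension of $\tilde\rho$ over $\tilde G^F.Q$ and hence killing the obstruction in $H^2(Q/\tilde G^F,\CC^\times)$ ($\cong C_2$ or $C_3$, and independent of all choices). Reassembling via the Sylow criterion produces the desired extension of $\rho$ to $I_{\Aut(S)}(\rho)$.

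I expect the main obstacle to be precisely this last part: producing the extension over the graph automorphism needs the full Deligne--Lusztig machinery for disconnected groups (or, in the field direction, Shintani descent) together with Lusztig's precise description of the $\Aut(S)$-action on unipotent characters, and checking that the two partial extensions glue --- equivalently, that the $H^2$-obstruction vanishes in the non-cyclic cases (untwisted $A_n$, $D_n$, $E_6$ with $f$ even, and triality in $D_4$) --- is what forces one to keep track of character values and not merely of degrees.
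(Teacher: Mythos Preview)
The paper does not prove this theorem at all: it is quoted verbatim as Theorem~2.4 of \cite{mal08} and used as a black box, so there is no ``paper's own proof'' to compare against. What you have written is, in effect, an outline of Malle's argument in \cite{mal08} itself, and in broad strokes it is faithful to that source: the reduction via a regular embedding to the inner--diagonal group, the observation that the residual field/graph quotient is cyclic outside untwisted $A_n$, $D_n$, $E_6$ (and $D_4$ with triality), the Sylow-by-Sylow extendibility criterion, and the appeal to the Digne--Michel theory of disconnected groups to manufacture an extension across a graph automorphism are exactly the ingredients Malle assembles.

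Your sketch is honest about where the real work lies. Two points deserve care if you ever want to turn this into a full proof. First, the passage ``the chosen extension over $\tilde G^F\langle\delta\rangle$ is $\phi$-invariant'' is the crux: it is not enough that $\phi$ fixes $\tilde\rho$ and commutes with $\delta$, since $\phi$ could a priori permute the $|\langle\delta\rangle|$ extensions of $\tilde\rho$ nontrivially; one must pin down a specific extension (via its values on the outer coset, computed through the disconnected Deligne--Lusztig characters) and verify that $\phi$ preserves those values. Malle does this by exploiting the explicit parametrisation and the rationality properties of the relevant almost characters. Second, the extension $\Aut(S)$ of $\mathrm{Inndiag}(S)$ by $\Phi$ need not split, so writing ``$\tilde G^F\rtimes\Phi_\rho$'' is slightly abusive; this is harmless for your argument because the Sylow criterion only requires extending over preimages of Sylow subgroups of $I/\tilde G^F$, not over genuine complements, but it is worth saying so.
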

		
		Furthermore, it was proved in Theorem 2.5 of \cite{mal08} that, with some exceptions listed in that theorem, all unipotent characters of groups of Lie type are $\Aut(S)$-invariant.

		\section{Extendibility of characters in nonsolvable groups}

		The results in the previous section, together with the result of Mattarei mentioned in the Introduction, allow to obtain results like the following.

		\begin{thm}
		\label{ext2}
		Let $G$ be a group with trivial solvable radical. Then there exists a faithful character $\beta\in\Irr(\bF^*(G))$ that extends irreducibly to $G$.
		\end{thm}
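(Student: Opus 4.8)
The plan is to realize the required character as an external tensor product of nonprincipal extendible characters of the simple composition factors of $\bF^*(G)$, and then to lift it to $G$ by first extending it to the overgroup $\Gamma=\Aut(\bF^*(G))$ via Mattarei's lemma and finally restricting back to $G$.

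Set up notation as in Section~2: write $\bF^*(G)=N_1\times\cdots\times N_t$ with $N_i\cong S_i^{\,n_i}$, $S_i$ a nonabelian simple group, and embed $G\le\Gamma=\Aut(\bF^*(G))=\prod_{i=1}^{t}\bigl(\Aut(S_i)\wr\Sym(n_i)\bigr)$, so that $N:=\prod_i\Aut(S_i)^{n_i}$ is the internal direct product of the base groups of the wreath factors and $\bF^*(G)=\prod_i S_i^{\,n_i}\le N$. First, for each $i$ I would invoke Theorem~\ref{extsim} to choose a nonprincipal $\varphi_i\in\Irr(S_i)$ together with an extension $\widehat\varphi_i\in\Irr(\Aut(S_i))$; since $S_i$ is simple and $\varphi_i\neq 1_{S_i}$, the character $\varphi_i$ is faithful. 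Put $\beta:=\varphi_1^{\times n_1}\times\cdots\times\varphi_t^{\times n_t}\in\Irr(\bF^*(G))$, where $\varphi_i^{\times n_i}$ denotes the external product of $n_i$ copies of $\varphi_i$ on $S_i^{\,n_i}$. Then $\beta$ is faithful, because $\Ker\beta=\prod_i(\Ker\varphi_i)^{n_i}=1$. This $\beta$ is the character claimed to extend irreducibly to $G$.

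It remains to extend $\beta$ to $G$, and the key step is to extend it all the way to $\Gamma$. For each $i$ consider $\widehat\beta_i:=\widehat\varphi_i^{\times n_i}\in\Irr(\Aut(S_i)^{n_i})$. It is invariant under $\Aut(S_i)\wr\Sym(n_i)$: it is fixed by conjugation by the base group $\Aut(S_i)^{n_i}$ (being a class function there), and it is fixed by the complement $\Sym(n_i)$ because all $n_i$ of its tensor factors are the \emph{same} character $\widehat\varphi_i$. Hence Mattarei's lemma (Lemma~1.3 of \cite{mat}) yields an extension $\widetilde\beta_i\in\Irr\bigl(\Aut(S_i)\wr\Sym(n_i)\bigr)$ of $\widehat\beta_i$. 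Setting $\widetilde\beta:=\widetilde\beta_1\times\cdots\times\widetilde\beta_t\in\Irr(\Gamma)$, one gets a character of $\Gamma$ whose restriction to $N$ is $\prod_i\widehat\beta_i$, so its restriction to $\bF^*(G)=\prod_i S_i^{\,n_i}$ equals $\prod_i\varphi_i^{\times n_i}=\beta$. Finally restrict $\widetilde\beta$ to $G$: since $(\widetilde\beta|_G)|_{\bF^*(G)}=\beta$ is irreducible, $\widetilde\beta|_G$ is itself an irreducible character of $G$, and it restricts to $\beta$ on $\bF^*(G)$. Thus $\beta$ is the faithful character of $\bF^*(G)$ required by the statement.

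I do not expect a genuine obstacle: the argument is a standard assembly of Theorem~\ref{extsim}, Mattarei's lemma, and the structural description of groups with trivial solvable radical recalled in Section~2. The only point needing care is the verification of the $\Aut(S_i)\wr\Sym(n_i)$-invariance of $\widehat\beta_i$ so that Mattarei's lemma genuinely applies, and this works precisely because one uses the same $\varphi_i$ (and the same extension $\widehat\varphi_i$) in every coordinate. One could instead argue inside $G$ directly, extending $\beta$ first to $K=G\cap N$ using the $\widehat\varphi_i$ and then across $G/K\hookrightarrow\prod_i\Sym(n_i)$ by Mattarei, but routing through $\Gamma$ is cleaner.
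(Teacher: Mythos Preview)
Your proposal is correct and follows essentially the same approach as the paper's proof: both extend the tensor-product character $\beta$ all the way to $\Gamma=\Aut(\bF^*(G))$ using Theorem~\ref{extsim} together with Mattarei's lemma, and then restrict to $G$. You are slightly more explicit than the paper in routing the extension through $\Aut(S_i)^{n_i}$ before invoking Mattarei's lemma, but this is exactly what the paper's one-line appeal to Lemma~1.3 of \cite{mat} is doing implicitly.
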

		
		\begin{proof}
		Notice that $\bF^*(G)$ is isomorphic to a direct product of copies of nonabelian simple groups. Write $\bF^*(G)=N_1\times\cdots\times N_t$, with $N_i=S_i\times\cdots\times S_i$ for every $i$ with $S_i$ simple and $S_i\neq S_j$ if $i\neq j$.  Let $n_i$ be the number of copies of $S_i$ that appear as direct factor of $N_i$. Thus $G$ is isomorphic to a subgroup of 
		$$
		\Aut(\bF^*(G))=\prod_i\Aut(N_i)=\prod_i\left(\Aut(S_i)\wr\Sym(n_i)\right)=\Gamma.
		$$
		We will see that $\bF^*(G)$ has some faithful character that extends to $\Gamma$.
		By Theorem \ref{extsim}, for every $i$ there exists $1_{S_i}\neq\alpha_i\in\Irr(S_i)$ that extends to $\Aut(S_i)$. Now, by Lemma 1.3 of \cite{mat}, $\beta_i=\alpha_i\times\cdots\times\alpha_i\in\Irr(N_i)$ extends to $\Aut(S_i)\wr \Sym(n_i)$. Now, it suffices to notice that $\beta=\beta_1\times\cdots\times\beta_t\in\Irr(\bF^*(G))$ is faithful and extends to $\Gamma$. 
		\end{proof}
		
	The next result, that appears as Corollary 10.5 of \cite{nav}, can be proved similarly. 
		
		\begin{thm}
Let $S$ be a nonabelian simple group. Let $N=S\times\cdots\times S$ be a minimal normal subgroup of a finite group $G$. Assume that $\varphi\in\Irr(S)$ extends to $I_{\Aut(S)}(\varphi)$. Put $\mu=\varphi\times\cdots\times\varphi\in\Irr(N)$. Then $\mu$ extends to $I_G(\mu)$. 
\end{thm}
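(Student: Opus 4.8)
The plan is to mimic the proof of Theorem~\ref{ext2}: combine Mattarei's lemma with the structure of the automorphism group of a power of a nonabelian simple group, after a harmless reduction.

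\textbf{Reduction to $\bC_G(N)=1$.} Let $C=\bC_G(N)\trianglelefteq G$. Since $S$ is nonabelian simple, $\bZ(N)=1$, so $N\cap C=1$ and $N$ maps isomorphically onto $NC/C\trianglelefteq G/C$; thus $G/C$ acts faithfully on $N$ and we may identify it with a subgroup $\bar G$ of $\Aut(N)$. The conjugation action of $g\in G$ on $\Irr(N)$ depends only on $gC$, so $C\leq I_G(\mu)$ and $I_G(\mu)/C$ is the inertia group in $\bar G$ of the character $\bar\mu$ of $NC/C$ corresponding to $\mu$. An extension of $\bar\mu$ to $I_G(\mu)/C$, inflated to $I_G(\mu)$, is an irreducible character whose restriction to $N$ is $\mu$ (using the isomorphism $N\cong NC/C$). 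Hence it suffices to prove the statement for $\bar G\leq\Gamma:=\Aut(N)$; I will produce an extension of $\mu$ to $I_\Gamma(\mu)$ and restrict it. (The hypothesis that $N$ be minimal normal, i.e.\ that $G$ permutes the factors transitively, will not be needed.)

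\textbf{The inertia group in $\Gamma$.} As recalled after Theorem~\ref{gas}, $\Gamma\cong\Aut(S)\wr\Sym(n)=\Aut(S)^n\rtimes\Sym(n)$, with $N$ identified with the base subgroup $S^n$. Write $H=I_{\Aut(S)}(\varphi)$. Since all tensor factors of $\mu=\varphi\times\cdots\times\varphi$ are equal, the subgroup $\Sym(n)$ fixes $\mu$ (it merely permutes identical factors), while $(\theta_1,\dots,\theta_n)\in\Aut(S)^n$ fixes $\mu$ exactly when every $\theta_i$ fixes $\varphi$. Therefore
$$
I_\Gamma(\mu)=H^n\rtimes\Sym(n)=H\wr\Sym(n).
$$

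\textbf{Applying the hypothesis and Mattarei's lemma.} By assumption $\varphi$ extends to $H=I_{\Aut(S)}(\varphi)$; fix $\hat\varphi\in\Irr(H)$ with $\hat\varphi|_S=\varphi$, and set $\hat\mu=\hat\varphi\times\cdots\times\hat\varphi\in\Irr(H^n)$, a character of the base group of $H\wr\Sym(n)$ that is $\Sym(n)$-invariant (again because the factors are identical). By Mattarei's lemma (Lemma~1.3 of \cite{mat}), $\hat\mu$ extends to an irreducible character $\tilde\mu$ of $H\wr\Sym(n)=I_\Gamma(\mu)$. Restricting to $N=S^n\leq H^n$ gives $\tilde\mu|_N=\hat\mu|_{S^n}=\varphi\times\cdots\times\varphi=\mu$, so $\tilde\mu$ is an extension of $\mu$ to $I_\Gamma(\mu)$. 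Finally, $I_{\bar G}(\mu)=\bar G\cap I_\Gamma(\mu)$, so $\psi:=\tilde\mu|_{I_{\bar G}(\mu)}$ has degree $\mu(1)$ and $\psi|_N=\mu$; since a character of any group lying between $N$ and $I_\Gamma(\mu)$ whose restriction to $N$ is irreducible is itself irreducible (each irreducible constituent has degree $\leq\mu(1)$ and, restricted to $N$, contains $\mu$, hence has degree $\geq\mu(1)$), we get $\psi\in\Irr(I_{\bar G}(\mu))$. Unwinding the reduction yields the desired extension of $\mu$ to $I_G(\mu)$.

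I do not expect a genuine obstacle here; the content is carried entirely by the hypothesis on $\varphi$ and by Mattarei's lemma. The only points needing care are the computation $I_\Gamma(\mu)=H\wr\Sym(n)$ (which uses that all factors of $\mu$ coincide, so that $\Sym(n)$ contributes nothing to the stabilizer) and the routine observation that an extension of $\mu$ stays irreducible when restricted to an intermediate subgroup.
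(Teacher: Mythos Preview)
Your argument is correct and is precisely the ``similarly'' that the paper gestures at: embed $G/\bC_G(N)$ in $\Gamma=\Aut(S)\wr\Sym(n)$, identify $I_\Gamma(\mu)=H\wr\Sym(n)$ with $H=I_{\Aut(S)}(\varphi)$, extend $\hat\varphi\times\cdots\times\hat\varphi$ via Mattarei's lemma, and restrict. One wording slip: in your final parenthetical you say ``$\Sym(n)$ contributes nothing to the stabilizer'', but you mean the opposite---because all factors of $\mu$ are equal, $\Sym(n)$ lies entirely in $I_\Gamma(\mu)$.
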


Lemma 1.3 of \cite{mat} also allows us to control the field of values of the extended character. For instance, we have the following.

\begin{thm}[Lemma 2.2 of \cite{mproc}]
\label{3}
Let $\Gamma=G\wr A$ be a wreath product. Let $B=G\times\cdots\times G$ be the base group and let $\theta\in\Irr(B)$. Let $T=I_{\Gamma}(\theta)$. Then $\theta$ extends to $\eta\in\Irr(T)$ and $\eta^{\Gamma}\in\Irr(\Gamma)$. Furthermore, if $\chi=\eta^{\Gamma}$, then $\QQ(\chi)\subseteq\QQ(\eta)=\QQ(\theta)$.
\end{thm}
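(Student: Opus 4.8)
The plan is to reduce everything to Mattarei's Lemma~1.3 of \cite{mat} by observing that $T$ is again a wreath product of the same kind. First I would note that $\theta$, being a class function on $B$, is automatically $B$-invariant, so $B\leq T$; writing $\Gamma=B\rtimes A$ with $A$ the top group acting on the set $\Omega=\{1,\dots,n\}$, this gives $T=B\rtimes A_0$, where $A_0=T\cap A=\{a\in A:\theta^a=\theta\}\leq A\leq\Sym(\Omega)$. Thus $T$ is, as an abstract group, the wreath product $G\wr A_0$ with base group $B$, and by construction $\theta\in\Irr(B)$ is $T$-invariant. (If one wishes to invoke Mattarei's lemma only for transitive top groups, one first partitions $\Omega$ into $A_0$-orbits, notes that the $T$-invariance of $\theta=\theta_1\times\cdots\times\theta_n$ forces $\theta_i=\theta_j$ whenever $i$ and $j$ lie in the same $A_0$-orbit, embeds $T$ into the direct product of the wreath products attached to these orbits, and argues factor by factor; the arguments below go through verbatim.)

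Next I would apply Mattarei's Lemma~1.3 of \cite{mat} to the wreath product $T=G\wr A_0$ and its $T$-invariant base-group character $\theta$, obtaining an extension $\eta\in\Irr(T)$ of $\theta$. This is the step that uses the \emph{extra} content of that lemma beyond the folklore existence statement: it comes with an explicit description of the values of $\eta$ on an element $(g_1,\dots,g_n)\sigma\in T$ in terms of the values of the $\theta_i$ at the ``cycle products'' $g_{i_1}g_{i_2}\cdots g_{i_k}$ along the cycles $(i_1\,i_2\,\cdots\,i_k)$ of $\sigma$. Consequently every value of $\eta$ lies in $\QQ(\theta_1,\dots,\theta_n)=\QQ(\theta)$, so $\QQ(\eta)\subseteq\QQ(\theta)$; and since $\theta=\eta|_B$ we trivially have $\QQ(\theta)\subseteq\QQ(\eta)$, whence $\QQ(\eta)=\QQ(\theta)$.

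Finally I would assemble the pieces. Since $B\trianglelefteq\Gamma$, $\theta\in\Irr(B)$, $T=I_\Gamma(\theta)$, and $\eta\in\Irr(T)$ lies over $\theta$, the Clifford correspondence (Theorem~6.11 of \cite{isa}) gives $\chi:=\eta^\Gamma\in\Irr(\Gamma)$. For the field of values, I would use the elementary fact that every value of an induced character $\psi^\Gamma$ is a sum of values of $\psi$ (expand $\psi^\Gamma$ over a transversal of the subgroup, with $\psi$ extended by $0$ outside it), so that $\QQ(\psi^\Gamma)\subseteq\QQ(\psi)$ always; taking $\psi=\eta$ gives $\QQ(\chi)\subseteq\QQ(\eta)$. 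Combined with the previous paragraph this yields $\QQ(\chi)\subseteq\QQ(\eta)=\QQ(\theta)$, as claimed.

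I expect the only real obstacle to be the identity $\QQ(\eta)=\QQ(\theta)$: one must actually quote --- or, for a self-contained account, reprove --- the precise formula for the values of the Mattarei extension, since the bare existence of \emph{some} extension of $\theta$ to $T$ tells one nothing about its character field. The remaining ingredients (the wreath-product shape of $T$, the Clifford correspondence, and the behaviour of $\QQ(-)$ under restriction and induction) are all routine.
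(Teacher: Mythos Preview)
Your proposal is correct and follows precisely the route the paper indicates: the paper does not spell out a proof here but states the result as Lemma~2.2 of \cite{mproc} immediately after remarking that ``Lemma~1.3 of \cite{mat} also allows us to control the field of values of the extended character,'' and your argument does exactly that---identify $T$ as $G\wr A_0$, apply Mattarei's explicit extension formula to get $\QQ(\eta)=\QQ(\theta)$, and finish with Clifford correspondence plus the trivial inclusion $\QQ(\eta^\Gamma)\subseteq\QQ(\eta)$. This is the intended proof.
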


In short, the results in the previous section can be used in conjunction with Mattarei's lemma to get convenient  nonprincipal irreducible characters of nonabelian chief  factors of finite groups that extend irreducibly to $G$.

We conclude this section with a proof of Corollary B of \cite{mr}  using the ideas in this section instead of Theorem A of \cite{mr}. 

\begin{lem}
\label{sim}
Let $S$ be a nonabelian simple group and let $p$ be a prime number. Then there exists $\alpha\in\Irr_{p'}({\rm Aut}(S))$ faithful. 
\end{lem}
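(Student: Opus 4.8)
The plan is to deduce this lemma from the Navarro--Tiep theorem stated above, together with elementary Clifford theory. Throughout I identify $S$ with $\operatorname{Inn}(S)\trianglelefteq\Aut(S)$ and use two standard structural facts: $\bC_{\Aut(S)}(S)=1$, so the only normal subgroups of $\Aut(S)$ are $1$ and those containing $S$; and every $\varphi\in\Irr(S)$ is fixed by $S$ (a class function is invariant under inner automorphisms), so that $S\leq I_{\Aut(S)}(\varphi)$ for every $\varphi\in\Irr(S)$.

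The reduction I would set up is the following: it suffices to produce a nonprincipal $\varphi\in\Irr_{p'}(S)$ such that $I:=I_{\Aut(S)}(\varphi)$ has $p'$-index in $\Aut(S)$ and $\varphi$ extends to some $\hat\varphi\in\Irr(I)$. Given such data, put $\alpha=\hat\varphi^{\Aut(S)}$. Since $\hat\varphi\in\Irr(I)$ lies over $\varphi$ and $S\trianglelefteq\Aut(S)$, the Clifford correspondence gives $\alpha\in\Irr(\Aut(S))$. Its degree $\alpha(1)=|\Aut(S):I|\,\varphi(1)$ is a product of two $p'$-numbers (the first by hypothesis on $I$, the second because $\varphi\in\Irr_{p'}(S)$), so $\alpha\in\Irr_{p'}(\Aut(S))$. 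Finally, $\alpha$ is faithful: since $S\leq I$, restriction gives $\alpha|_S=\sum_{g}\varphi^{g}$, a sum of $\Aut(S)$-conjugates of the nonprincipal character $\varphi$, none of which is principal; hence $S\not\leq\Ker\alpha$, and since $\Ker\alpha\trianglelefteq\Aut(S)$ the structural fact above forces $\Ker\alpha=1$. Thus $\alpha$ is the character we want.

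It then remains to produce $\varphi$, and here a trivial case split enters. If $p\mid|S|$, such a $\varphi$ is exactly what the Navarro--Tiep theorem above furnishes (indeed with a $\QQ_p$-valued extension to $I$, which is more than I need). If $p\nmid|S|$, then $\Irr_{p'}(S)=\Irr(S)$ and I would instead invoke Theorem~\ref{extsim}: it gives a nonprincipal $\varphi\in\Irr(S)=\Irr_{p'}(S)$ that extends to all of $\Aut(S)$; in particular $\varphi$ is $\Aut(S)$-invariant, so $I=\Aut(S)$ has $p'$-index $1$ and the hypotheses of the reduction hold trivially. In either case the reduction yields the desired faithful $\alpha\in\Irr_{p'}(\Aut(S))$.

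I do not expect a genuine obstacle here: the lemma is a short consequence of Navarro--Tiep, and the only points needing care are the irreducibility of the induced character (the Clifford correspondence, using $S\trianglelefteq\Aut(S)$) and its faithfulness (using that $\Aut(S)$ has no nontrivial normal subgroup meeting $S$ trivially, i.e.\ $\bC_{\Aut(S)}(S)=1$).
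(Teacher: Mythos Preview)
Your argument is correct, but it takes a different route from the paper's. The paper's proof is a two-line appeal to Theorem~D of Isaacs--Knutson \cite{ik}: that result guarantees directly the existence of some $\alpha\in\Irr_{p'}(\Aut(S))$ with $S\not\leq\Ker\alpha$, and then the uniqueness of $S$ as a minimal normal subgroup of $\Aut(S)$ forces $\Ker\alpha=1$. There is no case split on whether $p$ divides $|S|$, and no Clifford-theoretic construction. Your approach instead builds $\alpha$ explicitly by starting from a character of $S$, extending to the inertia group, and inducing; this requires the case split (Navarro--Tiep when $p\mid|S|$, Theorem~\ref{extsim} when $p\nmid|S|$) and the verification that induction preserves both $p'$-degree and faithfulness. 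What your route buys is that it invokes only results already displayed in Section~6 of the survey, whereas the paper reaches outside to \cite{ik}; what the paper's route buys is brevity and the avoidance of any case distinction. One small caveat: the survey's statement of the Navarro--Tiep theorem does not literally include the word ``nonprincipal'', though the original (Lemma~2.2 of \cite{hun}) does, so you are relying on the intended rather than the stated form.
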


\begin{proof}
By Theorem D of \cite{ik}, there is $\alpha\in{\rm Irr}_{p'}({\rm Aut}(S))$ not having $S$ in its kernel. Since $S$ is the unique minimal normal subgroup of ${\rm Aut}(S)$, necessarily ${\rm Ker}(\alpha)=1$.
\end{proof}

\begin{thm}[Corollary C of \cite{mr}]
\label{rad}
 Let $G$ be a finite group with trivial solvable radical. Then there exists $\chi\in{\rm Irr}_{p'}(G)$ faithful.
\end{thm}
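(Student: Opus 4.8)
The plan is to deduce the theorem from the two simple-group ingredients of this section, namely Lemma~\ref{sim} (a faithful $p'$-degree irreducible character of $\Aut(S)$) and Mattarei's lemma, in the spirit of the proof of Theorem~\ref{ext2}. First I would invoke the reduction of Section~2: since the solvable radical of $G$ is trivial we have $\bC_G(\bF^*(G))=1$, and, writing $\bF^*(G)=\prod_{i=1}^t S_i^{n_i}$ with the $S_i$ pairwise non-isomorphic nonabelian simple groups, $G$ embeds in $\Gamma=\Aut(\bF^*(G))=\prod_{i=1}^t\bigl(\Aut(S_i)\wr\Sym(n_i)\bigr)$. For each $i$, Lemma~\ref{sim} gives a faithful $\alpha_i\in\Irr_{p'}(\Aut(S_i))$; the character $\mu_i=\alpha_i\times\cdots\times\alpha_i$ of $\Aut(S_i)^{n_i}$ is symmetric, hence invariant in $\Aut(S_i)\wr\Sym(n_i)$, so by Mattarei's lemma it extends to $\widetilde\mu_i\in\Irr\bigl(\Aut(S_i)\wr\Sym(n_i)\bigr)$, with $\widetilde\mu_i$ restricting to $\mu_i$ on the base group. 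Then $\Theta=\widetilde\mu_1\times\cdots\times\widetilde\mu_t\in\Irr(\Gamma)$ has degree $\prod_i\alpha_i(1)^{n_i}$, which is coprime to $p$.

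The desired $\chi$ will be an irreducible constituent of $\Theta|_G$, and two things must be verified. For the degree: writing $\Theta|_G=\sum_j e_j\chi_j$ with $\chi_j\in\Irr(G)$, the equality $p\nmid\Theta(1)=\sum_j e_j\chi_j(1)$ forces some $\chi_j$ to have $p'$-degree. For faithfulness: the key observation is that $\Theta|_{\bF^*(G)}=\prod_i(\alpha_i|_{S_i})^{\times n_i}$. As $\alpha_i$ is faithful and, by Clifford's theorem, the irreducible constituents of $\alpha_i|_{S_i}$ form a single $\Aut(S_i)$-orbit, none of them is principal, hence each is faithful because $S_i$ is simple; consequently every irreducible constituent of $\Theta|_{\bF^*(G)}$ is a tensor product of faithful characters of the (centerless) simple direct factors of $\bF^*(G)$, and is therefore faithful on $\bF^*(G)$. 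So for any constituent $\chi$ of $\Theta|_G$ we get $\Ker\chi\cap\bF^*(G)=1$, whence $[\Ker\chi,\bF^*(G)]=1$ and $\Ker\chi\leq\bC_G(\bF^*(G))=1$. Taking $\chi$ to be the $p'$-degree constituent $\chi_j$ completes the argument.

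The main obstacle, and the reason one cannot just copy the proof of Theorem~\ref{ext2}, is that Lemma~\ref{sim} only controls $\Aut(S)$: a faithful $p'$-degree irreducible character of the simple group $S$ that extends to $\Aut(S)$ need not exist (for instance, for $S=\PSL_2(3^f)$ and $p=3$ the only nonprincipal irreducible character of $S$ extending to $\Aut(S)$ is the Steinberg character, of degree $3^f$), so $\Theta|_G$ is genuinely reducible and there is no single extendible character to point to. The remedy is the elementary degree-sum remark above, together with the fact that reducibility does no harm to faithfulness here, since every constituent of $\Theta|_{\bF^*(G)}$ is already faithful and $\bC_G(\bF^*(G))=1$ propagates this to all of $G$. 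Apart from that, the only routine points to check are that $\mu_i$ is genuinely $\Aut(S_i)\wr\Sym(n_i)$-invariant and that a tensor product of faithful characters of centerless simple groups is faithful.
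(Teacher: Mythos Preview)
Your proof is correct and follows essentially the same route as the paper's: build the $p'$-degree character $\Theta$ on $\Gamma=\prod_i\Aut(S_i)\wr\Sym(n_i)$ from Lemma~\ref{sim} and Mattarei's lemma, then take a $p'$-degree constituent of $\Theta|_G$ and check faithfulness via the fact that every constituent of $\Theta|_{\bF^*(G)}$ is faithful. The only cosmetic difference is that the paper phrases the final step as ``$\chi$ lies over a faithful character of $\Soc(G)$, hence is faithful'', whereas you route it through $\Ker\chi\leq\bC_G(\bF^*(G))=1$; these are equivalent.
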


\begin{proof}
Write $F=\bF^*(G)$.
Since $G$ has trivial solvable radical we have that    $F$ is a direct product of nonabelian simple groups. Let $S_1,\dots, S_t$ be the pairwise nonisomorphic simple groups that appear as direct factors of  $F$ and write $F=N_1\times\cdots\times N_t$, with $N_i$ a direct product of, say $r_i$, copies of $S_i$ for every $i$. Notice that $\bC_G(F)\leq F$, so $\bC_G(F)=\bZ(F)=1$.  Thus $G$ is isomorphic to a subgroup of $\Gamma={\rm Aut}(F)={\rm Aut}(N_1)\times\cdots \times {\rm Aut}(N_t)$. 


Then ${\rm Aut}(N_i)={\rm Aut}(S_i)\wr {\rm Sym(r_i)}$. Let $M={\rm Aut}(S_1)^{r_1}\times\cdots\times{\rm Aut}(S_t)^{r_t}$ and note that $M\lhd\Gamma$. 
By Lemma \ref{sim}, let $\beta_i\in{\rm Irr}_{p'}({\rm Aut}(S_i))$ faithful. Note that $\beta_i$ lies over  faithful characters of $S_i$. Let $\alpha_i$ be the product of $r_i$ copies of $\beta_i$, so $\alpha_i\in {\rm Irr}(\Aut(S_i)^{r_i})$ is $\Aut(N_i)$-invariant. By Lemma 1.3 of \cite{mat},  $\alpha_i$ extends to $\hat{\alpha}_i\in\Irr(\Aut(N_i))$.  Note that $\beta_i$ lies over a faithful character of $S_i$.

Put $\alpha=\alpha_1\times\alpha_2\times\cdots\times \alpha_t\in{\rm Irr}(M)$. Note that  $\alpha$ is also faithful and $\tau=\hat{\alpha}_1\times\cdots\times\hat{\alpha}_t$ is an extension of $\alpha$ to $\Gamma$.  Now, notice that $\tau$ has $p'$-degree and $\Ker\tau\cap F=1$. Since $F=\Soc(G)=\Soc(\Gamma)$ contains all the minimal normal subgroups of $\Gamma$, $\tau$ is faithful. Since  $\beta_i$ lies over a faithful character of $S_i$, it follows from the construction of $\tau$ that $\tau$ lies over faithful characters of $F$. Since $\tau$ has $p'$-degree, there exists $\chi\in\Irr_{p'}(G)$ lying under $\tau$. Notice that $\chi$ also lies over faithful characters of $F=\Soc(G)$. Hence, $\chi$ is faithful, as desired.
\end{proof}

\section{Simple groups and their automorphisms: regular orbits}

In Section 6 we have discussed the existence of non-principal irreducible characters of a non-abelian simple group $S$ that extend to $\Aut(S)$. Sometimes (but it seems that less often)  we are interested in the opposite, and want to find irreducible characters in $\Irr(S)$ in large $\Aut(S)$-orbits. It is not obvious that if $S<G\leq\Aut(S)$ then there exists $\varphi\in\Irr(S)$ that is not $G$-invariant. 

\begin{thm}
\label{inv}
Let $S$ be a non-abelian simple group.  If $g\in \Aut(S)-S$ then there exists $\varphi\in\Irr(S)$  such that $\varphi^g\neq\varphi$.
\end{thm}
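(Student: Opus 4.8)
The plan is to show that a non-inner automorphism of $S$ cannot fix every irreducible character, by contraposition: if $g\in\Aut(S)$ fixes $\varphi$ for all $\varphi\in\Irr(S)$, then $g$ acts trivially on the character table of $S$, and this forces $g$ to be inner. First I would recall that any automorphism $g$ of $S$ permutes both $\Irr(S)$ and the set of conjugacy classes of $S$, and that these two permutation actions are compatible in the sense that $g$ fixes every irreducible character if and only if it fixes every conjugacy class (this is the standard duality: the permutation matrix of $g$ on $\Irr(S)$ is conjugate, via the character table, to the permutation matrix of $g$ on classes, so the two permutations have the same cycle type; in particular one is trivial iff the other is). Hence it suffices to show that if $g$ stabilizes every conjugacy class of $S$ setwise, then $g$ is an inner automorphism.

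The cleanest route to that last implication is a theorem of Feit and Seitz (building on work classifying automorphisms acting trivially on conjugacy classes), which states precisely that a finite simple group has no outer automorphism fixing all conjugacy classes; equivalently, $\mathrm{Out}(S)$ acts faithfully on the set of conjugacy classes of $S$. So the main step is to invoke this result. If one wants to avoid citing it as a black box, the argument proceeds class-by-class through the classification of finite simple groups: for alternating groups one uses that an outer automorphism of $A_m$ (for $m\neq 6$) comes from an odd permutation and therefore fuses some pair of classes, handling $A_6$ separately; for groups of Lie type one uses that diagonal, field, and graph automorphisms each visibly move some semisimple or unipotent class (e.g. a field automorphism moves a class of elements whose eigenvalues are not fixed by the corresponding Frobenius on the relevant torus); and the finitely many sporadic groups are checked directly from the Atlas. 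I would present the Feit--Seitz statement as the key input and sketch only the alternating-group case to illustrate.

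Putting it together: given $g\in\Aut(S)-S$, its image in $\mathrm{Out}(S)$ is nontrivial, so by Feit--Seitz there is a conjugacy class $C$ of $S$ with $C^g\neq C$; pick $x\in C$, so $x^g$ is not conjugate to $x$ in $S$. Since the class function $\sum_{\varphi\in\Irr(S)}\varphi(1)\varphi$ separates classes (the columns of the character table are linearly independent), there must exist some $\varphi\in\Irr(S)$ with $\varphi(x^g)\neq\varphi(x)$, and then $\varphi^g(x)=\varphi(x^{g^{-1}})$; replacing $g$ by $g^{-1}$ at the outset if necessary, we get $\varphi^g\neq\varphi$, as desired.

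The main obstacle is the non-trivial input that $\mathrm{Out}(S)$ acts faithfully on conjugacy classes: this is where the real content lies, and a fully self-contained proof would need the classification of finite simple groups together with a case-by-case verification. Everything else --- the character-table duality between the two permutation actions and the column-independence argument that produces the witnessing $\varphi$ --- is elementary linear algebra in the group algebra. I expect the author either cites Feit--Seitz directly or reduces quickly to it.
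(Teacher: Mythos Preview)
Your proposal is correct and follows essentially the same route as the paper: cite the CFSG-dependent fact that no outer automorphism of a nonabelian simple group fixes every conjugacy class, then apply Brauer's permutation lemma to pass from classes to characters. The paper's proof consists of exactly these two steps; your closing column-independence argument is just an explicit unwinding of one direction of Brauer's lemma, and the ``replace $g$ by $g^{-1}$'' maneuver is unnecessary since $\varphi^g=\varphi$ iff $\varphi^{g^{-1}}=\varphi$.
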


\begin{proof}
By Theorem C of \cite{fs} (which relies on the classification of finite simple groups), there exists a conjugacy class of $S$ that is not fixed by $g$ (setwise). Now, apply Brauer's permutation lemma (Theorem 6.32 of \cite{isa}).
\end{proof}

It is well-known that regular orbits do not need to exist, even when $G/S$ is an abelian $p$-group (take $G=\Aut(\Sym(6))$) or when $G/S$ is cyclic (take $G=\Aut(\PSL_2(27))$).  As a consequence of Theorem \ref{inv}, we deduce that regular orbits exist when $G/S$ is cyclic of prime power order.

\begin{cor}
Let $S$ be a non-abelian simple group.  If $S<G\leq\Aut(S)$ and $G/S$ is a cyclic $p$-group for some prime $p$, then there exists $\varphi\in\Irr(S)$ such that $I_G(\varphi)=S$.
\end{cor}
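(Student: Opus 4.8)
The plan is to reduce the statement to Theorem~\ref{inv} by passing to a generator of the cyclic $p$-group $G/S$. Since $G/S$ is a cyclic $p$-group, it has a unique minimal subgroup, say $\langle gS\rangle$ of order $p$, where $g\in G-S$. The key observation is that an element $\varphi\in\Irr(S)$ satisfies $I_G(\varphi)=S$ if and only if $\varphi$ is not fixed by any nontrivial element of $G/S$, and because $G/S$ is a cyclic $p$-group, \emph{every} nontrivial subgroup of $G/S$ contains the unique subgroup of order $p$. Hence $I_G(\varphi)>S$ forces $\varphi$ to be fixed by $g$.

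First I would fix $g\in G-S$ with $gS$ of order $p$ in $G/S$. Then I would apply Theorem~\ref{inv} to the automorphism induced by $g$ (note $g$ acts on $S$ as an element of $\Aut(S)-S$, identifying $S$ with $\operatorname{Inn}(S)$): there exists $\varphi\in\Irr(S)$ with $\varphi^g\neq\varphi$. I then claim $I_G(\varphi)=S$. Indeed, $I_G(\varphi)$ is a subgroup of $G$ containing $S$, so $I_G(\varphi)/S$ is a subgroup of the cyclic $p$-group $G/S$. If $I_G(\varphi)/S\neq 1$, then since the unique minimal subgroup of a cyclic $p$-group is contained in every nontrivial subgroup, we would get $gS\in I_G(\varphi)/S$, i.e.\ $g\in I_G(\varphi)$, contradicting $\varphi^g\neq\varphi$. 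Therefore $I_G(\varphi)/S=1$, that is $I_G(\varphi)=S$, as required.

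There is essentially no serious obstacle here: the corollary is a soft consequence of Theorem~\ref{inv} together with the elementary subgroup structure of cyclic $p$-groups. The only point requiring a modicum of care is the identification of the action of $g$ on $S$ with an element of $\Aut(S)-S$, which is immediate since $S$ is simple (so $\mathbf{C}_{\Aut(S)}(S)=1$ and $S\cong\operatorname{Inn}(S)$), and the observation that $\varphi^g\neq\varphi$ depends only on the coset $gS$ since inner automorphisms fix every element of $\Irr(S)$. No classification input beyond what is already invoked in Theorem~\ref{inv} is needed.
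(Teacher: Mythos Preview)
Your argument is correct and follows exactly the same route as the paper: apply Theorem~\ref{inv} to an outer automorphism $g\in G$ with $gS$ of order $p$, and then use the fact that every nontrivial subgroup of the cyclic $p$-group $G/S$ contains $\langle gS\rangle$ to conclude $I_G(\varphi)=S$. The paper's proof is the one-line ``Apply Theorem~\ref{inv} to an outer automorphism in $G$ of order $p$'', and you have simply unpacked this in full detail.
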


\begin{proof}
Apply Theorem \ref{inv} to an outer automorphism in $G$ of order $p$.
\end{proof}

There are many applications of regular orbit theorems where more than one regular orbit is necessary. In the situation above, we have that if $p$ does not divide $|S|$, then there are at least two regular orbits.

\begin{thm}[Proposition 2.6 of \cite{mt}]
Let $S$ be a non-abelian simple group.  If $S<G\leq\Aut(S)$, $G/S$ is a cyclic $p$-group for some prime $p$ and $p$ does not divide $|S|$, then there exist $\varphi_1,\varphi_2\in\Irr(S)$ such that $I_G(\varphi_i)=S$ for $i=1,2$.
\end{thm}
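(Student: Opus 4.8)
The plan is to recast the statement as one about a coprime cyclic action, produce the first regular orbit from Theorem~\ref{inv}, obtain a second one by complex conjugation in all but a residual case, and settle that case using the classification. Since a nonabelian simple group has even order, the hypothesis $p\nmid|S|$ forces $p$ to be odd. Let $P\in\Syl_p(G)$. Then $P\cap S=1$, so $P\cong PS/S$ has order $|G|_p=|G:S|$; hence $P\cong G/S$ is cyclic, $G=S\rtimes P$, and $P$ acts coprimely on $S$, on $\Irr(S)$ and on the set of conjugacy classes of $S$. Write $P=\langle u\rangle$ of order $p^a$ and let $H=\langle v\rangle$, $v=u^{p^{a-1}}$, be its unique subgroup of order $p$; as every nontrivial subgroup of a cyclic $p$-group contains $H$, one has $I_G(\varphi)=S$ if and only if $\varphi^v\neq\varphi$, for $\varphi\in\Irr(S)$. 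Since $S$ acts trivially on $\Irr(S)$, the $G$-orbits on $\Irr(S)$ coincide with the $P$-orbits, and the task is to produce two distinct $P$-orbits not fixed by $v$.

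Because $v\in\Aut(S)\setminus S$, Theorem~\ref{inv} furnishes $\varphi_1\in\Irr(S)$ with $\varphi_1^v\neq\varphi_1$, so its $P$-orbit $\mathcal{O}_1$ is regular. Complex conjugation commutes with the $P$-action on $\Irr(S)$, so $\overline{\varphi_1}^v=\overline{\varphi_1^v}\neq\overline{\varphi_1}$ and the $P$-orbit $\mathcal{O}_2$ of $\overline{\varphi_1}$ is regular too; moreover, if $\overline{\varphi_1}=\varphi_1^{u^j}$ then conjugating again gives $\varphi_1=\varphi_1^{u^{2j}}$, so $u^{2j}$ lies in the (trivial) stabiliser of $\varphi_1$, whence $p^a\mid j$ because $p$ is odd, and therefore $\overline{\varphi_1}=\varphi_1$. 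Consequently, if among the non-$v$-fixed characters one can choose a \emph{non-real} one, then $\mathcal{O}_1\neq\mathcal{O}_2$ and $\varphi_2=\overline{\varphi_1}$ finishes the argument; what remains is the case in which every non-$v$-fixed irreducible character of $S$ is real.

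For this residual case I would invoke the classification. An odd prime $p$ dividing $|\operatorname{Out}(S)|$ but not $|S|$ cannot occur for $S$ alternating or sporadic (there $|\operatorname{Out}(S)|$ has no odd prime divisor), and for $S$ of Lie type any such prime divides the order of the cyclic group of field automorphisms; thus $v$ is $\Aut(S)$-conjugate to a field automorphism of order $p$ (and in fact $p\geq5$, since $3\mid|\operatorname{Out}(S)|$ already forces $3\mid|S|$, so that the case $p=3$ is vacuous). Then $\cent{S}{v}$ is a group of the same Lie type over a subfield of index $p$, and by the Brauer permutation lemma together with the class form of the Glauberman correspondence the number of non-$v$-fixed irreducible characters of $S$ equals $k(S)-k(\cent{S}{v})$. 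Since the number of conjugacy classes of a group of Lie type over $\FF_q$ is of order of magnitude $q^{r}$, $r$ being the Lie rank, while the field defining $S$ has degree $p$ over that of $\cent{S}{v}$, this difference exceeds $2p^a$ with room to spare in every case; as the non-$v$-fixed characters lie in $P$-orbits of size $p^a$, there are at least two regular $P$-orbits, and the theorem follows. The main obstacle is precisely this last step — the classification-based identification of $v$ as a field automorphism together with a uniform class-number comparison across all Lie type families — although the inequality itself is never close to tight, since the hypothesis $p\nmid|S|$ has already removed the small configurations (such as $\PSL_2(8)$ with $p=3$) where it could have been.
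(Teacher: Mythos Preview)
The survey does not prove this theorem; it simply quotes it as Proposition~2.6 of \cite{mt}. So there is no argument in the present paper to compare your attempt against, and I can only comment on the attempt itself.

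Your first two paragraphs are correct and clean. Since $|S|$ is even, $p$ is odd; a Sylow $p$-subgroup $P$ of $G$ complements $S$, so $G$-orbits on $\Irr(S)$ are $P$-orbits, and such an orbit is regular precisely when its members are moved by the unique order-$p$ subgroup $\langle v\rangle$. Theorem~\ref{inv} gives one regular orbit, and the complex-conjugation manoeuvre (using that $p$ is odd) produces a second one whenever some non-$v$-fixed character is non-real. All of this is fine.

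The last paragraph, however, needs repair. The assertion that ``$p\geq5$, so the case $p=3$ is vacuous'' is false: for the Suzuki groups $^2B_2(2^{2m+1})$ one has $3\nmid|S|$, while $3$ can divide $|\mathrm{Out}(S)|=2m+1$; already $S={}^2B_2(8)$ with $p=3$ is a live case. In that very example $k(S)-k(\cent{S}{v})=11-5=6=2p^{a}$, so the inequality you need is met \emph{exactly}, not ``with room to spare''. Your Glauberman/Brauer counting strategy is sound and does yield the result once one goes through the families carefully, but the margins are occasionally zero and you must actually do the bookkeeping rather than assert it. You should also justify more carefully that $p\nmid|S|$ forces the $p$-part of $\mathrm{Out}(S)$ to lie in the field-automorphism factor (this is true --- the diagonal and graph parts that could contribute an odd prime always force that prime into $|S|$ --- but it deserves a line). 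Finally, observe that your counting argument in the residual case makes no use of the hypothesis that all non-$v$-fixed characters are real; logically it subsumes the complex-conjugation step, so you may wish either to drop that step or to explain that it disposes of most cases cheaply and leaves only a short classification check.
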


In view of the lack of general results that guarantee the existence of large orbits, more technical results have been obtained. 

\begin{thm}[Theorem 2.3 of \cite{dnpst}]
Let $S$ be a non-abelian simple group.  If $S<G\leq\Aut(S)$ and $g\in G-S$ has odd order, then there exists $\varphi\in\Irr(S)$ such that $g$ does not fix any $G$-conjugate of $\varphi$.
\end{thm}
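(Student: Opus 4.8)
The plan is to factor through the action of $\bar G:=G/S$ on $\Irr(S)$. By Theorem~\ref{inv} this action is faithful, and $\bar g:=gS$ is a nontrivial element of odd order. For $\varphi\in\Irr(S)$ the inertia group $I_G(\varphi)$ contains $S$, hence equals the full preimage of $I_{\bar G}(\varphi)$ under the map $G\to\bar G$; therefore $g$ fixes a given $G$-conjugate $\varphi^x$ (equivalently $xgx^{-1}\in I_G(\varphi)$) if and only if the corresponding $\bar G$-conjugate of $\bar g$ lies in $I_{\bar G}(\varphi)$. Thus the assertion is equivalent to the existence of $\varphi\in\Irr(S)$ with $\bar g^{\bar G}\cap I_{\bar G}(\varphi)=\emptyset$, i.e. of a full $\bar G$-orbit on $\Irr(S)$ disjoint from $\bigcup_{h\in\bar g^{\bar G}}\operatorname{Fix}_{\Irr(S)}(h)$.

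Two elementary remarks reduce the problem considerably. First, it suffices to find $\varphi\in\Irr(S)$ whose stabilizer in $\operatorname{Out}(S):=\Aut(S)/S$ is a $2$-group, since such a subgroup contains no element of the order of $\bar g$; and if $\bar G=\langle\bar g\rangle$ the assertion is immediate, because by faithfulness some $\varphi$ is moved by $\bar g$ and then its entire $\langle\bar g\rangle$-orbit is moved by $\bar g$ (if $\varphi^{\bar g^{k+1}}=\varphi^{\bar g^{k}}$ then $\varphi^{\bar g}=\varphi$). Second, if the theorem fails for $g$ then $\Irr(S)=\bigcup_{h\in\bar g^{\bar G}}\operatorname{Fix}_{\Irr(S)}(h)$, so, since $\langle\bar g\rangle\le\mathbf C_{\bar G}(\bar g)$,
\[
|\Irr(S)|\ \le\ |\bar g^{\bar G}|\cdot\big|\operatorname{Fix}_{\Irr(S)}(\bar g)\big|\ \le\ \frac{|\operatorname{Out}(S)|}{|\bar g|}\,\big|\operatorname{Fix}_{\Irr(S)}(\bar g)\big|,
\]
which gives a contradiction once the right-hand side is small enough. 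For $S$ alternating or sporadic, $\operatorname{Out}(S)$ is a $2$-group (of order at most $2$, apart from $\operatorname{Out}(A_6)=C_2^2$), so the hypothesis on $g$ is vacuous; hence we may assume $S$ is of Lie type.

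For $S$ of Lie type one writes $\operatorname{Out}(S)$ as diagonal-by-(field $\times$ graph) and invokes the known description of the action of automorphisms on $\Irr(S)$. If $\bar g$ has a nontrivial field component (or, for type $D_4$, graph component) of odd order $e\ge 3$, then by Shintani descent / Deligne--Lusztig theory the $\bar g$-fixed irreducible characters correspond, up to a bounded multiplicity, to $\Irr$ of a group of the same (twisted) type over a proper subfield of $\FF_q$, so $\big|\operatorname{Fix}_{\Irr(S)}(\bar g)\big|$ is a polynomial in $q$ of much smaller degree than $|\Irr(S)|$ (a polynomial of degree the rank of $S$); since $|\operatorname{Out}(S)|$ grows only like $\log q$ times a bounded factor, the displayed inequality holds for all but finitely many $(S,q)$, which are then dispatched from generic character tables or the Atlas. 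If instead $\bar g$ is a diagonal automorphism of odd order---which forces $S$ into a short list of types (essentially linear, unitary, or $E_6$-type)---then its fixed characters are, up to bounded multiplicity, those of the adjoint group, and Lusztig's Jordan decomposition shows that a fixed positive proportion of $\Irr(S)$ is moved (the semisimple characters that genuinely split), again giving the inequality outside a small explicit list.

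I expect the diagonal case to be the main obstacle: there $\operatorname{Fix}_{\Irr(S)}(\bar g)$ can be a constant fraction of $\Irr(S)$, so the crude estimate $|\bar g^{\bar G}|\le|\operatorname{Out}(S)|/|\bar g|$ is too lossy, and one instead has to observe that $|\bar g^{\bar G}|$ is in fact tiny (usually $1$ or $2$), or to exhibit explicit moved characters---for instance suitably regular semisimple characters whose $\operatorname{Out}(S)$-stabilizer can be pinned down directly. One must also sharpen the generic polynomial comparison enough that the residual list of small groups is genuinely finite and computer-checkable, with the usual care for the small-rank families $\PSL_2(q)$, $\PSL_3^{\varepsilon}(q)$, $\PSp_4(q)$ and $\tw{2}B_2(q)$.
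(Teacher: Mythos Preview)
The paper does not contain a proof of this theorem: it is quoted from \cite{dnpst} as an external result, with no argument given. There is therefore nothing in the present paper to compare your proposal against.

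As a standalone sketch, your initial reduction is correct: passing to $\bar G=G/S$ and using the faithfulness of the action (Theorem~\ref{inv}) to rephrase the statement as finding a $\bar G$-orbit on $\Irr(S)$ missed by every conjugate of $\bar g$ is exactly the right setup, and the observation that alternating and sporadic groups fall out because $\operatorname{Out}(S)$ is a $2$-group is fine. However, what you have written for Lie type is a plan, not a proof: you yourself flag that the counting inequality is too crude in the diagonal case, that the polynomial comparison for field/graph automorphisms must be made precise enough to leave only a finite residual list, and that the small-rank families require separate care. None of these steps is carried out. If you wish to see how the argument is actually executed, you should consult \cite{dnpst} directly.
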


This result was important to obtain a version of the Isaacs-Navarro-Wolf conjecture on zeros of characters for arbitrary finite groups. As pointed out in \cite{dnpst}, $\Omega_8^+(2)$ shows that the hypothesis that the order of $g$ is odd is necessary. This group also shows that the actions of $G/S$ on $\Irr(S)$ and on the set of conjugacy classes of $S$ are not permutation isomorphic. A version of this result for $2$-elements has been recently obtained.

\begin{thm}[Corollary 2.2 of \cite{mt22}]
Let $S$ be a non-abelian simple group.  If $S<G\leq\Aut(S)$ and $gS\in \bF(G/S)$ is a nontrivial $2$-element, then there exists $\varphi\in\Irr(S)$ such that $g$ does not fix any $G$-conjugate of $\varphi$.
\end{thm}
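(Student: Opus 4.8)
The plan is to translate the statement into the language of orbits and inertia subgroups, reduce to the case that $gS$ has order $2$, and then argue by a case analysis over the simple groups $S$. For $\varphi\in\Irr(S)$ and $x\in G$, the automorphism $g$ fixes $\varphi^x$ if and only if $x^{-1}gx\in I_G(\varphi)$; thus ``$g$ fixes no $G$-conjugate of $\varphi$'' is the same as $g^G\cap I_G(\varphi)=\emptyset$, where $g^G$ is the $G$-class of $g$. Since $S=\operatorname{Inn}(S)$ acts trivially on $\Irr(S)$, this only involves $\overline G:=G/S\le\Aut(S)/S$: writing $\bar g=gS$, the condition reads $\bar g^{\,\overline G}\cap I_{\overline G}(\varphi)=\emptyset$. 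Next I would reduce to $|\bar g|=2$: if $|\bar g|=2^k$ with $k\ge1$, set $h=g^{2^{k-1}}$, so that $\bar h=\bar g^{\,2^{k-1}}\in\bF(\overline G)$ is a nontrivial element of order $2$; since $g$ fixing $\varphi^x$ forces $h$ to fix $\varphi^x$, any character $\varphi$ that works for $h$ also works for $g$. Henceforth $\bar g$ is an involution in $\bF(\overline G)$, so $\bar g\in M:=\mathbf O_2(\overline G)\trianglelefteq\overline G$; in particular $g^2\in S$, so $g$ acts on $\Irr(S)$ as an involution and $\bar g^{\,\overline G}$ is a set of involutions inside $M$. (If $M=1$ there is nothing to prove, so assume $M\ne1$.)

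The goal is now to find $\varphi\in\Irr(S)$ whose $\overline G$-orbit on $\Irr(S)$ misses every $\bar g$-fixed point. It is worth recording that this conclusion is exactly what fails without the Fitting hypothesis — the triality involutions of $\Omega_8^+(2)$, which lie outside $\bF(\operatorname{Out})$, being the standard obstruction — and that it contains Theorem~\ref{inv} as the case $\overline G=\langle\bar g\rangle$ of order $2$. When $S$ is alternating or sporadic, $\operatorname{Out}(S)$ is a $2$-group of order at most $4$, and the few possible involutions $\bar g$ can be treated directly, using the character tables together with Theorem~\ref{inv} and Theorem~2.3 of \cite{dnpst}. The main content is the groups of Lie type. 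There one first identifies which $2$-power-order outer automorphisms can lie in a normal $2$-subgroup $M$ of $\overline G$ — diagonal automorphisms, $2$-parts of field automorphisms, and, when they are not pushed outside $\bF(\operatorname{Out})$ by the arithmetic of the diagonal part, certain graph and graph-field automorphisms — and then uses Deligne--Lusztig theory: irreducible characters are labelled by Lusztig series attached to semisimple classes of the dual group, the action of diagonal, field and graph automorphisms on these labels is known, and one exhibits a semisimple class whose $\overline G$-orbit of labels avoids every label fixed by $\bar g$, pulling the corresponding character back to $\Irr(S)$. As a frequently convenient alternative, one may pass to the conjugacy-class side: by Brauer's permutation lemma (Theorem~6.32 of \cite{isa}) together with the known compatibility between the actions of diagonal and field automorphisms on $\Irr(S)$ and on conjugacy classes, it is enough in those cases to produce a conjugacy class of $S$ no $\overline G$-conjugate of which is $\bar g$-fixed.

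The step I expect to be the main obstacle is the case where $\overline G$ is non-abelian, so that $\bar g^{\,\overline G}$ consists of several distinct involutions and one must find a \emph{single} $\overline G$-orbit of characters that simultaneously avoids the fixed points of all of them — in particular when $\operatorname{Out}(S)$ contains a dihedral $2$-subgroup generated by a diagonal and a graph automorphism, whose graph-type involutions form a nontrivial class. The Fitting hypothesis is precisely what keeps the relevant set of involutions small and coherent (it forbids, for instance, an outer involution that normalizes but does not centralize an odd-order outer part), but verifying this for the small-rank groups of Lie type, along with the usual short list of small exceptions and of characteristic-$2$ groups where the generic Deligne--Lusztig argument degenerates, is where the real care is needed; everything else should follow from the structure of $\operatorname{Out}(S)$ and the results collected in Section~6 and cited above.
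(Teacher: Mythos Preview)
This theorem is not proved in the present paper at all: it is merely quoted, with attribution, as Corollary~2.2 of \cite{mt22}, and no argument is supplied here beyond the citation. There is therefore no proof in this paper against which to compare your proposal.

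That said, your outline is a plausible strategy and is very much in the spirit of how such statements are established in the literature you cite --- reduce to the case where $\bar g$ is an involution in $\mathbf O_2(\overline G)$, then run a case analysis over the classification, handling Lie-type groups via the known action of outer automorphisms on Lusztig series. Two caveats deserve mention. First, what you have written is expressly a plan (``I would reduce'', ``the step I expect to be the main obstacle''), not a proof; the entire content of the result lies in the case analysis you defer, and in particular in the verification that the Fitting hypothesis genuinely excludes the problematic configurations (as the paper stresses, $\Omega_8^+(2)$ shows the conclusion fails without it). Second, your suggested shortcut via Brauer's permutation lemma is more delicate than you indicate: that lemma only equates fixed-point \emph{counts} for each cyclic subgroup acting on $\Irr(S)$ and on conjugacy classes, and the paper explicitly notes (just above the theorem in question) that the two permutation actions need not be isomorphic. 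Hence the lemma alone does not transfer a statement about $\overline G$-orbits on $\Irr(S)$ avoiding the full class $\bar g^{\,\overline G}$ to the analogous statement on conjugacy classes; one needs the extra ``known compatibility'' you allude to, and that compatibility has to be argued separately for each family.
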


\section{Nonsolvable groups: orbits on the power set}

As we have seen in Section 2, if $G$ is a nonsolvable group, then there exists $K\trianglelefteq G$ such that $G/K$ permutes the simple direct factors of the socle of $G/R(G)$. In this context, results on orbits of permutation groups on their power set are often useful. For instance, the following result was important to prove part (i) of Huppert's $\rho$-$\sigma$-conjecture (see Section 5). Recall that if a group $G$ acts on a set $\Omega$ and $\mu$ is a set of primes, we say that an orbit is $\mu$-semiregular if its size is divisible by all the primes in $\mu$.

\begin{thm}[Lemma 2.5 of \cite{mh}]
Suppose that a group $G$ acts on a finite set $\Omega$.  Let $n$ be the largest integer such that $A_n$ is a section of $G$.  Then $G$ has a  $\mu$-semiregular orbit on the power set $\PP(\Omega)$, where $\mu$ is the set of primes $p$ such that $p\geq\max\{32,(n+1)/2\}$. 
\end{thm}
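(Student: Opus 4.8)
\emph{Strategy.} The two numbers in the threshold $\max\{32,(n+1)/2\}$ point to the structure of the argument: reduce to the case of a primitive permutation group, and then treat separately the groups containing $\Alt(\Omega)$, where the bound $(n+1)/2$ is used, and the remaining primitive groups, where the bound $32$ --- which is the largest degree occurring in Seress's classification of primitive groups with no regular orbit on a power set --- is used.

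\emph{Reductions.} First I would replace $G$ by its image in $\Sym(\Omega)$; this changes neither the $G$-orbits on $\PP(\Omega)$ nor their sizes, and it can only decrease $n$ and so only enlarge $\mu$, hence it suffices to treat faithful $G$. If $G$ is intransitive, write $\Omega=\Omega_1\sqcup\cdots\sqcup\Omega_k$ for the $G$-orbits; then $\PP(\Omega)=\prod_i\PP(\Omega_i)$ as $G$-sets, the stabilizer of a tuple is the intersection of the stabilizers of its coordinates, and every prime of $\mu$ dividing $|G|$ divides $|G^{\Omega_i}|$ for some $i$, so the statement for the (smaller-degree, faithful, transitive) groups $G^{\Omega_i}$ acting on the $\Omega_i$ yields it for $G$ by induction on $|\Omega|$. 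If $G$ is transitive but imprimitive, a nontrivial block system $\mathcal{B}$ gives $G\le H\wr K$ with $K=G^{\mathcal{B}}$ acting on $\mathcal{B}$ and $H$ transitive on a block, both of degree $<|\Omega|$, and the largest alternating sections of $H$ and $K$ are at most $n$; by induction I get orbits of the required divisibility for $K$ on $\PP(\mathcal{B})$ and for $H$ on the power set of a block, and I assemble them into a subset of product form --- a prescribed nonempty subset inside each block belonging to a good block-subset, and the empty set in the other blocks --- whose $G$-orbit has size divisible by a suitable product of the two component orbit sizes, hence by every prime of $\mu$ dividing $|G|$. This leaves $G$ primitive and faithful on $\Omega$, with $m:=|\Omega|$ and $n\le m$.

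\emph{The primitive case.} If $G\ge\Alt(\Omega)$ then $n=m$, so every prime of $\mu$ satisfies $m/2<p\le m$, whence $v_p(m!)=1$; the $G$-orbits on $\PP(\Omega)$ are the sets of $j$-element subsets, of size $\binom{m}{j}$, so I would pick a single $j$ for which $p\mid\binom{m}{j}$ for all relevant $p$ at once. By Kummer's theorem this is the occurrence of a carry when $j$ and $m-j$ are added in base $p$, and since $2p>m$ any $j$ with $m-p+1\le j\le p-1$ makes the base-$p$ units digits of $j$ and $m-j$ add up to $m\ge p$, forcing the carry; one checks that over the primes $p\in\mu$ dividing $m!$ these windows have a common value of $j$ (this is the purpose of the lower bound $(n+1)/2$), and any such $j$ names the desired orbit. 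If instead $G\not\ge\Alt(\Omega)$, it suffices to find a subset of $\Omega$ with trivial setwise stabilizer, i.e.\ a regular orbit of $G$ on $\PP(\Omega)$: it has size $|G|$, hence is divisible by every prime dividing $|G|$, in particular by every prime of $\mu$ dividing $|G|$. By Seress's theorem (which uses the classification of finite simple groups), a primitive group other than $\Alt(\Omega)$ or $\Sym(\Omega)$ with no regular orbit on the power set of its domain has degree at most $32$; but if $m\le 32$ then every prime dividing $|G|$ is at most $31<32\le\min\mu$, so $\mu$ contains no prime dividing $|G|$ and there is nothing to prove. Thus $G$ has the sought orbit in every case.

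\emph{Main obstacle.} The technical heart is the imprimitive reduction: one must split the primes of $\mu$ correctly between the top action of $K$ on $\mathcal{B}$ and the bottom action of $H$ on a block of $H\wr K$, choose the product-form subset so that the resulting $G$-orbit size is genuinely divisible by a product covering all of $\mu$ (watching the degenerate cases where one of the two prime-sets is empty or where a ``good orbit'' is trivial), and, above all, verify that the alternating-section parameter never grows --- when passing to a quotient, to an orbit constituent, or to the group induced on a block system. Once this bookkeeping is in place, the primitive case is short: Seress's theorem settles everything except $\Alt(\Omega)$ and $\Sym(\Omega)$, and those are handled by the elementary binomial-coefficient computation above.
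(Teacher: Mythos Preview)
The survey does not prove this lemma; it is quoted from \cite{mh} without argument, so there is no proof in the paper to compare your attempt against. Your overall strategy --- reduce to a faithful primitive constituent and then split into the alternating/symmetric case versus the Seress case --- is the standard one and is essentially how the original reference proceeds.

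There is, however, a genuine gap in your treatment of $G\ge\Alt(\Omega)$. You claim that the windows $[m-p+1,\,p-1]$, taken over the primes $p\in\mu$ dividing $m!$, have a common point, but when $m$ is odd and $p=(m+1)/2$ this window is $[p,\,p-1]$, which is empty. Worse, for such a $p$ no value of $j$ works at all: writing $j=ap+b$ with $0\le b<p$ and $a\in\{0,1\}$, Lucas' theorem gives $\binom{2p-1}{j}\equiv\binom{1}{a}\binom{p-1}{b}\not\equiv 0\pmod p$. Thus for $m=73$ and $p=37$ (which lies in $\mu$, since $37\ge 32$ and $37=(73+1)/2$), the group $\Sym(73)$ in its natural action has \emph{no} orbit on $\PP(\Omega)$ of size divisible by $37$, so the statement as reproduced in this survey is literally false at that boundary. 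The inequality in the original source must be strict (or equivalently the threshold is $(n+2)/2$); with $p>(n+1)/2$ your argument does go through, and $j=\lfloor m/2\rfloor$ lies in every window. You should flag this discrepancy explicitly rather than hide it behind ``one checks''. Your outline of the imprimitive reduction is reasonable, but as written it is only a plan: to make it a proof you must specify the assembled subset precisely (ensuring the chosen block-subsets are neither empty nor full, so that the block pattern can be read off) and verify directly that any $\mu$-element of $G$ stabilising it is forced first into the block kernel and then to the identity.
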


Using the notation from Section 2, this theorem allows to find irreducible characters of $G$ whose degree is divisible by ``many" of the prime divisors of $|G/K|$ and thus constitutes an important step toward to a proof of part (i) of the $\rho$-$\sigma$-conjecture.  The bound obtained in \cite{mh} is quadratic. As mentioned at the end of  p. 3381 in \cite{mh}, the results in \cite{mh} reduce the problem of obtaining a linear bound for arbitrary finite groups to getting such a bound for groups $G$ such that $S_1\times\cdots\times S_t\leq G\leq \Aut(S_1)\times\cdots\times\Aut(S_t)$ for some nonabelian simple groups $S_1,\dots,S_t$. That is, it reduces the problem to groups ``without permutation part". 

This linear bound was achieved by C. Casolo and S. Dolfi in \cite{cd}.  Later, the Casolo-Dolfi bound was slightly improved to $|\rho(G)|\leq6\sigma(G)+1$.
Very recently, this bound has been improved to  $|\rho(G)|\leq\max\{5\sigma(G)+1,6\sigma(G)-4\}$ \cite{adp}. Modulo some conjecture in number theory, it was  shown in \cite{adp} (see also \cite{adps}) that  in a linear bound for the $\rho$-$\sigma$ problem for arbitrary groups, the coefficient that multiplies $\sigma(G)$ cannot be smaller than $3$.
 We think that the reason  why we are still far is that this type of results has always followed the structure presented in this paper: one considers first solvable groups, next groups with trivial solvable radical (or, in other words, works in $G/R(G)$) and finally glues the two pieces. 
We think that some approach that combines the two parts is necessary in order to get bounds that are close to best possible. We propose the following version of Question \ref{hupsol} for arbitrary finite groups. 

\begin{que}
\label{hupgen}
Let $G$ be a finite group. Do there exist $\chi_1,\chi_2, \chi_3\in\Irr(G)$ such that  $\rho(G)=\pi(\chi_1\chi_2\chi_3(1))$?
		\end{que}
		
		As the groups $\SL_2(2^n)$ for $n\geq 2$ or the Janko group $J_1$ show, less than $3$ irreducible characters do not suffice. The case of simple groups of  Question \ref{hupgen} was conjectured by Alvis (see \cite{ab}).  Actually, just the case of alternating groups remained open after \cite{ab}. This was settled in \cite{bw}. 

The following extension of Gluck's conjecture to nonsolvable groups was raised in \cite{chmn}: is it true that $|G:\bF(G)|\leq b(G)^3$ for any finite group $G$?  We propose the following strengthening of this question, along the lines of Questions \ref{glusol} and \ref{hupgen}.

\begin{que}
Let $G$ be a finite group. Do there exist $\chi_1,\chi_2,\chi_3\in\Irr(G)$ such that  $|G:\bF(G)|$ divides $\chi_1(1)\chi_2(1)\chi_3(1)$?
\end{que}

In this case, note that since $b(G)<|G|^{1/2}$ for any simple group $G$, any simple group shows that two characters are not enough.






The following theorem of L. Babai and L. Pyber \cite{bp} is another result on orbits on the power set that we have used to study another problem on characters, more precisely, Brauer's Problem 1.

\begin{thm}[Theorem 1 of \cite{bp}]
Let $G$ be a permutation group on a set $\Omega$ of cardinality $k$. Suppose that $G$ does not contain any alternating group bigger than $A_n$ as a composition factor. Then the number of orbits of $G$ on the power set $\PP(\Omega)$ is at least $a^{k/t}$, where $a>1$ is some constant. 
\end{thm}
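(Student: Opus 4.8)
The plan is to induct on $k=|\Omega|$, with the Cauchy--Frobenius (Burnside) count as the basic engine: a subset of $\Omega$ is fixed by $g\in G$ precisely when it is a union of cycles of $g$, so the number of orbits of $G$ on $\PP(\Omega)$ equals $\frac{1}{|G|}\sum_{g\in G}2^{c(g)}$, where $c(g)$ is the number of cycles of $g$ on $\Omega$; in particular it is at least $2^{k}/|G|$, which settles every group that is ``small'' relative to its degree. First I would reduce to the case where $G$ is transitive. If $\Omega=\Omega_{1}\sqcup\cdots\sqcup\Omega_{r}$ are the $G$-orbits, fix a transversal $\mathcal{R}_{i}$ for the $G$-orbits on each $\PP(\Omega_{i})$; since every $g\in G$ preserves each $\Omega_{i}$, intersecting a subset with $\Omega_{i}$ recovers its trace, so the $\prod_{i}|\mathcal{R}_{i}|$ sets $S_{1}\cup\cdots\cup S_{r}$ with $S_{i}\in\mathcal{R}_{i}$ lie in pairwise distinct $G$-orbits. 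Hence the number of orbits of $G$ on $\PP(\Omega)$ is at least $\prod_{i}$ (number of orbits of the transitive group $G^{\Omega_{i}}$ on $\PP(\Omega_{i})$), and as the composition factors of each $G^{\Omega_{i}}$ lie among those of $G$, the transitive case implies the general one with the same constants.

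For transitive $G$ the easy case is that of a primitive group. Here I would appeal to the structure theory of primitive permutation groups — the O'Nan--Scott theorem together with explicit order bounds, for instance Mar\'oti's — to see that, under the hypothesis that no $A_{m}$ with $m>n$ occurs as a composition factor, a primitive group of degree $k$ either lies in a finite list of degree at most $k_{1}(n)$, or satisfies $|G|<k^{1+\log_{2}k}$ (in particular $|G|=2^{o(k)}$); in the latter case $2^{k}/|G|$ already exceeds $a^{k/t}$ once $k$ is large, and the finitely many bounded-degree groups — where there are still at least two orbits on $\PP(\Omega)$ — are absorbed by taking $t=t(n)$ large.

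The substance is the imprimitive transitive case, which I would attack by descending the imprimitivity structure. A maximal chain of $G$-invariant block systems embeds $G$ into an iterated wreath product $H_{1}\wr\cdots\wr H_{\ell}$ in which each $H_{i}$ is primitive of degree $b_{i}$, $b_{1}\cdots b_{\ell}=k$, and none of the $H_{i}$ has a forbidden alternating composition factor. The reduction is level-by-level: a subset of a level-$(i{-}1)$ block induces a colouring of its $b_{i}$ sub-blocks, each sub-block being coloured by the orbit of the level-$(i{+}1)$ action on the trace of the subset there; $G$-equivalent subsets produce $H_{i}$-equivalent colourings and every colouring occurs, so the number of orbits on the power set of a level-$(i{-}1)$ block is at least the number of orbits of $H_{i}$ on $N$-colourings of a $b_{i}$-set, which by Cauchy--Frobenius is $\frac{1}{|H_{i}|}\sum_{g\in H_{i}}N^{c(g)}$, where $N$ is the orbit count at the level below. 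Tracking this from the bottom value $2$ up to $\Omega$, estimating each step by the leading term $N^{b_{i}}/|H_{i}|$ when the block is large (so $N$ is exponentially large and swallows $|H_{i}|$, which is only $2^{o(b_{i})}$ since $H_{i}$ has no large alternating section) and by the full Cauchy--Frobenius sum when the block is small, is how one arrives at the bound $a^{k/t}$ for an absolute $a>1$ and $t=t(n)$.

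The hard part is exactly this bookkeeping. The obstruction is that $|G|$ can be of size nearly $2^{k}$ — iterated wreath products of tiny primitive groups realise this — so the crude bound $2^{k}/|G|$ is useless, and the count must be shown to grow genuinely multiplicatively down the tree. The delicate point is that at the low levels the leading term alone is too lossy: the recursion $N\mapsto N^{b}/|\overline{G}|$ can stall at a fixed point (already at $N=2$, $b=2$, $|\overline{G}|=2$), so one must retain the contribution of the non-identity elements of $H_{i}$, which is what gets the count off the ground — in the iterated-$S_{2}$ example it is the non-identity element at the bottom, amplified by the factor $2^{\ell-1}$ as one ascends, that produces the exponential lower bound. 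Fixing the threshold between ``large'' and ``small'' blocks and choosing $a$ and the function $t(n)$ so that the target inequality is self-reproducing under the descent — with $t$ necessarily depending on $n$, since already the iterated product actions of $A_{n}$ force $t(n)$ to grow at least like $n/\log n$ — is the genuine technical content of the theorem.
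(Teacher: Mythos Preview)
The paper does not prove this theorem. It is quoted as Theorem~1 of Babai--Pyber \cite{bp} and used as a black box; no argument is reproduced, so there is nothing in the paper to compare your proposal against.

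For what it is worth, your outline is the architecture of the original proof in \cite{bp}: the Cauchy--Frobenius count giving the crude lower bound $2^{k}/|G|$; reduction to the transitive case by multiplying over orbits; the primitive case via order bounds (Cameron/Mar\'oti type) that exploit the absence of large alternating composition factors; and the imprimitive case by embedding $G$ into the iterated wreath product along a maximal block chain and tracking the recursion $N\mapsto \frac{1}{|H_{i}|}\sum_{g\in H_{i}}N^{c(g)}$ level by level. Your identification of the genuine difficulty --- that the leading term $N^{b}/|H|$ alone stalls at the bottom of the tree, so the contribution of non-identity elements must be retained to get the count off the ground --- is exactly the point. Note also that the variable $t$ in the statement as printed is left undefined in the paper; as you correctly inferred, it is meant to depend on $n$.
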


We refer the reader to \cite{m07} where this result, in conjunction with many of the remaining ideas in this survey were used. The way to handle the case of solvable groups there follows different techniques than those in the large orbit Theorem \ref{lo} that we have considered here, but many of the ideas for nonsolvable groups that we have presented here were  first used there.

\section{Applications to problems that are not on character degrees}

So far we have focussed on problems on character degrees. As can be seen in \cite{mw} or \cite{mh}, for instance, the ideas we have presented here are also useful for problems on conjugacy class sizes. 
We conclude by describing some of the problems on character theory, that do not concern character degrees, but where these techniques are useful.

First, we recall the problem studied in \cite{m07} due to the formal similarities that it has with the problems that we will describe next. The goal in \cite{m07} was to prove that the order of a finite group $G$ can be bounded above in terms of the largest multiplicity of the character degrees. The case of $p$-groups had been proved in \cite{jz}. Building on this, we proved the case of solvable groups using \cite{fs} (this is another result that we have found useful many times) and then, with the techniques in this article we showed that the result holds for arbitrary finite groups if it holds for the symmetric groups. The case of symmetric groups was then solved by D. Craven \cite{cra}. 

Probably the problems where we have used more closely the ideas presented here concern zeros of characters. Given a group $G$, let $m^*(G)$ be the maximum number of zeros in a column of the character table of $G$. It was proved in Theorem A of \cite{ms1} that the number of nonlinear irreducible characters of $G$ is bounded above in terms of $m^*(G)$. (Notice that for any abelian group $G$, $m^*(G)=0$ so it is necessary to restrict ourselves to nonlinear characters).  As mentioned in the Introduction, using ad hoc techniques we solved this problem for $p$-groups (or, equivalently, nilpotent groups) in Section 3 of \cite{ms1}.  Using the large orbit Theorem \ref{lo}, we handled the case of solvable groups in Section 4.  Finally,  using the general picture described in Section 2 of this paper and Theorem \ref{extsim}, we proceeded with the general case in Section 5.

We have also considered a dual problem on zeros of characters. Given a group $G$, we write $m(G)$ to denote the maximum number of zeros in a row of the character table of $G$. Again, using ad hoc techniques it was proved in Theorem B of \cite{ms2} that if $P$ is a nonabelian $p$-group, then $|P|$ is bounded above in terms of $m(P)$. In Conjecture F of \cite{ms2} it was conjectured that for $G$ solvable, $|G:\bF(G)|$ is bounded above in terms of $m(G)$. As an immediate consequence of Theorem \ref{lo}, we proved in Theorem A of \cite{ms2} that $|G:\bF_8(G)|$ is bounded in terms of $m(G)$, but the conjecture for solvable groups remains open. (Actually, the result there is that $|G:\bF_{10}(G)|$ is bounded in terms of $m(G)$. We need Yang's improvement \cite{yan09} to get that $|G:\bF_8(G)|$ is bounded.)

Recently, we have considered this problem for arbitrary finite groups \cite{mjalg}.  In Theorem A of \cite{mjalg} we proved that for arbitrary finite groups $G$,  $|G:\bF_8(G)|$ is bounded above in terms of $m(G)$. We also conjectured the following (Conjecture 3.2 of \cite{mjalg}). The case of groups with trivial solvable radical (Theorem 2.3 of \cite{mjalg}), which was handled using the general picture from Section 2,  was an important part of the proof.  

\begin{con}
Let $G$ be a finite group. Then $|G:\bF(G)|$ is bounded in terms of $m(G)$. Furthermore, if $|\bF(G)|$ is not abelian, then $|G|$ is bounded in terms of $m(G)$. 
\end{con}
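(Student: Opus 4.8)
The plan is to reduce the statement to two inputs: the solvable case (Conjecture F of \cite{ms2}, which is assumed open, so in fact we must reprove its content in the form we need or else take it as a hypothesis) and the case of groups with trivial solvable radical, and then glue the pieces using the general picture of Section 2. Since the conjecture for solvable groups is itself open, the honest reading is that this conjecture is proposed, not proved; so what I will sketch is the natural line of attack that makes the nonsolvable part follow from the solvable part. Write $R=R(G)$ for the solvable radical and suppose for a moment that we know $|G/R:\bF(G/R)|$ is bounded in terms of $m(G/R)$, and that $m(G/R)\leq m(G)$ (this monotonicity under quotients is the kind of elementary observation used repeatedly in \cite{ms1,ms2,mjalg}; a zero in a row of the character table of $G/R$ lifts to a zero in the corresponding row of $G$).

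First I would handle the trivial-solvable-radical case, which the excerpt already flags as done in Theorem 2.3 of \cite{mjalg}. Here $\bF(G)=1$, so the claim is that $|G|$ is bounded in terms of $m(G)$. Using the decomposition from Section 2, $\bF^*(G)=S_1^{n_1}\times\cdots\times S_t^{n_t}$ and $G$ embeds in $\prod_i(\Aut(S_i)\wr\Sym(n_i))$. By Theorem \ref{extsim} each $S_i$ has a nonprincipal $\varphi_i\in\Irr(S_i)$ extending to $\Aut(S_i)$, and by Mattarei's lemma the product character $\beta_i=\varphi_i\times\cdots\times\varphi_i$ extends to $\Aut(S_i)\wr\Sym(n_i)$; taking the product $\beta=\prod_i\beta_i$ and an irreducible constituent of $\beta_G$ (or directly the extension, when $G=\Aut(\bF^*(G))$), one produces an irreducible character of $G$ that vanishes on ``many'' elements — concretely, one controls the number of zeros in the row of $\beta$ from below by the number of elements on which some $\varphi_i$-coordinate vanishes, which grows with $|S_i|$ and with $n_i$. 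Bounding $m(G)$ from below by such a quantity forces each $|S_i|$ and each $n_i$ bounded, hence $t$ bounded (since distinct $S_i$ occur) and therefore $|\bF^*(G)|$ bounded; then $|G|\leq|\Aut(\bF^*(G))|$ is bounded. I would expect this step to go through essentially as in \cite{mjalg}, the only care being the uniformity of the zero-count across simple group families, for which the results of Section 6 (extendible characters of large degree, e.g. Theorem 2.1 of \cite{hls}) are the right tools.

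Next I would glue. Apply the trivial-solvable-radical result to $\bar G=G/R$: this gives $|\bar G:\bF(\bar G)|$ bounded in terms of $m(\bar G)\leq m(G)$. Then I would invoke the solvable input applied inside $R$ together with the action of $G$ on $R$ — the subtlety is that $\bF(G)$ need not equal $\bF(R)$, but $\bF(R)\leq\bF(G)$ and $R/\bF(R)$ is a completely reducible faithful $R/\bF(R)$-module situation via Gasch\"utz (Theorem \ref{gas}); one controls $|R:\bF(R)|$ in terms of $m$ using the large orbit Theorem \ref{lo} exactly as in Theorem A of \cite{ms2} and \cite{mjalg}, which already yields that $|G:\bF_8(G)|$ is bounded in terms of $m(G)$. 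To upgrade $\bF_8$ to $\bF$ one needs the full solvable conjecture, i.e. that each of the Fitting quotients between consecutive layers is individually bounded; this is precisely Conjecture F of \cite{ms2} and is the genuine obstacle. For the second sentence of the conjecture, if $\bF(G)$ is nonabelian then some Sylow subgroup of $\bF(G)$ is a nonabelian $p$-group $P$ normal in $G$, and Theorem B of \cite{ms2} bounds $|P|$ in terms of $m(P)$; combining a character of $P$ witnessing many zeros with the bound on $|G:\bF(G)|$ (via Clifford theory, inducing or extending to get an irreducible character of $G$ whose zero-row is controlled below by that of $P$) bounds $|G|$.

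The main obstacle, then, is not the nonsolvable gluing — that is routine given Section 2 and Theorems \ref{extsim} and \ref{lo} — but the solvable case itself: pushing $\bF_8(G)$ down to $\bF(G)$ requires showing that a solvable group with few zeros in every character-table row has bounded Fitting height, or at least that each consecutive Fitting quotient is bounded, and Theorem \ref{lo} only delivers a bounded Fitting height a priori of $8$ without forcing it down to $1$. Any genuine proof of the conjecture must therefore find a mechanism — presumably producing, in a solvable group of large $|G:\bF(G)|$, an irreducible character vanishing on a controlled-from-below set of elements — that does not lose the factor of $8$; this is exactly the gap between Theorem A of \cite{ms2} and Conjecture F of \cite{ms2}, and closing it is where the real work lies.
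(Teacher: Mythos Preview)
The statement you were asked to prove is labeled in the paper as a \emph{conjecture} (Conjecture 3.2 of \cite{mjalg}, restated here), and the paper offers no proof of it. The surrounding text makes this explicit: the paper says ``we expect the proof of this conjecture to rely on the $p$-group case Theorem~B of \cite{ms2}'' and ``some ingredient other than Theorem~\ref{lo} seems necessary in order to get the solvable case of the conjecture.'' So there is nothing to compare your argument against; the paper itself regards the full statement as open.

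That said, your proposal correctly identifies this state of affairs and your sketch of the natural line of attack matches the paper's own commentary almost exactly: the trivial-solvable-radical case is handled (Theorem~2.3 of \cite{mjalg}); the second sentence of the conjecture should reduce to the $p$-group Theorem~B of \cite{ms2}; the genuine obstruction is the solvable case, i.e.\ passing from the known bound on $|G:\bF_8(G)|$ to one on $|G:\bF(G)|$, and Theorem~\ref{lo} alone does not suffice. Your diagnosis of the gap---that one needs a mechanism beyond the large-orbit theorem to force the Fitting height down---is precisely what the paper says in the sentence following the conjecture.
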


As can be seen from the proof of Theorem 3.3 of \cite{mjalg}, we expect the proof of this conjecture to rely on the $p$-group case Theorem B of \cite{ms2}. Some ingredient other than Theorem \ref{lo} seems necessary in order to get the solvable case of the conjecture. 

Finally, we mention two new areas where we have used these ideas recently: fields of values of characters and character kernels. We write
$$
f(G)=\max_{F/\QQ} |\{\chi\in\Irr(G)\mid\QQ(\chi)=F\}|,
$$ 
where $F/\QQ$ runs over the field extensions of $\QQ$. That is, $f(G)$ is the maximum multiplicity of the fields of values of the irreducible characters of $G$. We prove in Theorem A of \cite{mproc} that $|G|$ is bounded above in terms of $f(G)$ for every group $G$. The proof uses,  for instance, Theorem \ref{1}, \ref{2} and \ref{3}. 

On the other hand, in joint work with N. Rizo \cite{mr}, we have obtained an extension of the classical theorem of Broline-Garrison on character kernels (Theorem 12.19 of \cite{isa}) to 
irreducible characters of $p'$-degree, where $p$ is a prime.

Of course, all the results discussed in this section have formal similarities. However, the nature of character degrees, zeros of characters, fields of values, and character kernels are completely different. We think it is remarkable that these results have been obtained using similar techniques. In our opinion, this illustrates the power of the methods that have appeared in this paper.


 \end{document}